\documentclass[11pt]{amsart}
\usepackage{amsmath}
\usepackage{mathrsfs}
\usepackage{amsthm}
\usepackage{amssymb}
\usepackage{amsfonts}


\newtheorem{theorem}{Theorem}

\newtheorem{lemma}[theorem]{Lemma}

\theoremstyle{plain}

\newtheorem{proposition}[theorem]{Proposition}

\renewcommand\bigskip{\medskip}

\begin{document}
\title[]{ Generic Points of shift-Invariant Measures in the Countable Symbolic Space}
\author{Aihua Fan}
\address{Aihua Fan: LAMFA CNRS UMR 7352, Universit\'e  de Picardie Jules Verne, 33, Rue Saint Leu, 80039 Amiens Cedex 1, France}
\email{ai-hua.fan@u-picardie.fr}

\author{Mingtian Li }
\address{Mingtian Li: School of  Mathematics and Computer Science, Fujian Normal University, 350007 Fuzhou, People' Republic of China}
\email{limtwd@fjnu.edu.cn}

\author{Jihua Ma}
\address{Jihua Ma: Department of Mathematics Wuhan University, 430072 Wuhan, People' Republic of China}
\email{jhma@whu.edu.cn}

\maketitle
\begin{abstract}
We are concerned with sets of generic points for shift-invariant
measures in the countable symbolic space.  We measure the sizes of the sets by the Billingsley-Hausdorff dimensions defined by Gibbs measures. It is shown that the dimension of such a set is given by a variational
principle involving the convergence exponent of the Gibbs measure and the relative entropy dimension of the Gibbs measure with respect to the  invariant measure. This variational principle is different from that of the case of finite symbols, where the convergent exponent is zero and is not involved. An application is given to a class of   expanding interval dynamical systems.
\end{abstract}




\section{Introduction}
Consider the countable symbolic space $X=\mathbb{N}^\mathbb{N}$
endowed with the product topology and the shift mapping $T$ on $X$ defined by
 $$T(x_1x_2x_3\cdots)=(x_2x_3x_4\cdots).$$
 For any $T$-invariant Borel probability measure $\mu$ (we write $\mu\in\mathcal{M}(X,T)$), define the set of $\mu$-generic points by
 $$G_{\mu}:=\Big\{x\in X:\lim_{n\to\infty}\frac{1}{n}S_nf(x)=\int_{X} f\,\mathrm{d}\mu\quad\textrm{for all $f\in C_b(X)$}\Big\},$$
 where $S_nf(x):=\sum_{i=0}^{n-1}f(T^ix)$ is the $n$-th ergodic sum of $f$ and $C_b(X)$ denotes the space of all bounded real-valued continuous functions on $X$.

Our aim in this paper is to investigate the size of $G_\mu$ by
studying its Hausdorff dimension with respect to different metrics.
Let $\nu$ be another probability measure supported on the whole space $X$. It induces a metric $\rho_\nu$ on
$\mathbb{N}^\mathbb{N}$ as follows: if $x=y$, define
$\rho_\nu(x,y)=0$; otherwise
$$\rho_\nu(x,y)=\nu([x_1\cdots x_n]),$$ where $n=\inf\{k\ge 0:x_{k+1}\neq
y_{k+1}\}$ and $[x_1\cdots x_n]$ (called  cylinder) is the set of
all sequences having $x_1\cdots x_n$ as prefix. The Hausdorff
dimension of a subset of $X$ with respect to the metric $\rho_\nu$ is
the Billingsley dimension defined by $\nu$ (\cite{B}).

In this paper, we only consider metrics defined by Gibbs measures
(see the definition of Gibbs measure in Section $2$. See also \cite{Sarig2}). Let
$\varphi:X\to\mathbb{R}$ be a function, called a potential. For any
$n\geqslant1$, we define its $n$-order variation by
 $${\rm{var}}_n\varphi:=  \sup\{|\varphi(x)-\varphi(y)|:x_i=y_i,~\textmd{for}~1\leqslant i\leqslant n \}.$$
 We say that $\varphi$  has summable variations if $$\sum_{n=2}^\infty {\rm {var}}_n\varphi<\infty.$$
The Gurevich pressure of a potential $\varphi$ with summable
variations is defined to be the limit
$$P_\varphi:=\lim_{n\to\infty}\frac{1}{n}\ln\sum_{T^nx=x}e^{S_n\varphi(x)}1_{[a]}(x),$$
where  $a\in \mathbb{N}$ (\cite{Sarig0}). It is shown that the limit exists and is
independent of $a$. It is known (\cite{Sarig1})
that a potential function $\varphi$ with summable variations  admits
a unique Gibbs measure $\nu$ iff ${\rm{var}}_1\varphi<\infty$ and
the Gurevich pressure $P_\varphi<\infty$.

  As we shall prove, the Billingsley dimension $ \dim_\nu G_ \mu$ is tightly related to the  convergence exponent of $\nu$, which is defined by
  $$\alpha_\nu:=\inf\Big\{t>0:\sum_{n=1}^{\infty}\nu([n])^t<+\infty\Big\}.$$ It is evident  that $\alpha_\nu\leqslant1$. We will prove that if the measure theoretic entropy $h_\nu$ is the infinity we have $\alpha_\nu=1$ (see Section 2.2).
  For $\mu\in\mathcal{M}(X,T)$, define  the (relative) entropy dimension of $\nu$ with respect to $\mu$ by
  $$\beta(\nu|\mu):=\limsup_{k\rightarrow\infty}\limsup_{N\rightarrow\infty}\frac{\sum_{\omega\in\Sigma^k_N}\mu([\omega])\ln\mu([\omega])}
  {\sum_{\omega\in\Sigma^k_N}\mu([\omega])\ln\nu([\omega])}$$
 where $\Sigma_N^k=\{1,\cdots,N\}^k$.
      Our main result is the following.

 \begin{theorem}\label{thm}
 Let $\mu\in \mathcal{M}(X,T)$ be an invariant Borel probability measure and  $\varphi$ be a potential function of summable  variations admitting a unique Gibbs measure $\nu$ with convergence  exponent $\alpha_\nu$. We have
 \begin{equation}\label{dimensionformula}
  \dim_\nu G_ \mu=\max\left\{\alpha_\nu,\beta(\nu|\mu)\right\}.
  \end{equation}
 \end{theorem}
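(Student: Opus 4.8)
The plan is to prove the two inequalities $\dim_\nu G_\mu \le \max\{\alpha_\nu,\beta(\nu|\mu)\}$ and $\dim_\nu G_\mu \ge \max\{\alpha_\nu,\beta(\nu|\mu)\}$ separately, using throughout the Gibbs property of $\nu$: there is a constant $C\ge 1$ with
$$C^{-1}\le \frac{\nu([x_1\cdots x_n])}{\exp(S_n\varphi(x)-nP_\varphi)}\le C$$
for all $x$ and all $n$. This is the device that converts the $\nu$-measures of cylinders (which are exactly the diameters in the metric $\rho_\nu$) into Birkhoff sums of $\varphi$, and so lets me relate covering sums to pressures and the defining ratio of $\beta(\nu|\mu)$ to relative entropies. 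I will also rely on the mass distribution principle (Billingsley's lemma): if $m$ is a finite measure with $m(E)>0$ and $\liminf_n \ln m([x_1\cdots x_n])/\ln\nu([x_1\cdots x_n])\ge s$ for $m$-a.e.\ $x$, then $\dim_\nu E\ge s$, together with its covering counterpart for the upper bound.

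For the lower bound I would establish $\dim_\nu G_\mu\ge\beta(\nu|\mu)$ and $\dim_\nu G_\mu\ge\alpha_\nu$ independently; since $\dim_\nu G_\mu$ is a single number, a lower bound by each quantity is automatically a lower bound by their maximum. The estimate by $\beta(\nu|\mu)$ is the classical entropy contribution. I would construct a measure $m$ carried by $G_\mu$ by concatenating, along a rapidly increasing sequence of scales $k_1\ll k_2\ll\cdots$, independent blocks distributed according to the $k_j$-dimensional marginals of $\mu$. Because the empirical $k$-block frequencies of an $m$-typical point converge to those of $\mu$, such points are $\mu$-generic, so $m(G_\mu)=1$; a Shannon--McMillan--Breiman type computation combined with the Gibbs property then yields $\liminf_n \ln m([x_1\cdots x_n])/\ln\nu([x_1\cdots x_n])\ge\beta(\nu|\mu)$ for $m$-a.e.\ $x$, and Billingsley's lemma finishes the case. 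This is where the double $\limsup$ in $k$ and $N$ in the definition of $\beta(\nu|\mu)$ must be dealt with, by first restricting to the finite alphabet $\{1,\dots,N\}$ and then passing to the limit along the blocks.

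The estimate $\dim_\nu G_\mu\ge\alpha_\nu$ is the genuinely new, countable-alphabet phenomenon. I fix $t<\alpha_\nu$, so that $\sum_n\nu([n])^t=\infty$, and graft onto a fixed $\mu$-generic backbone a sparse sequence of ``free'' coordinates $n_1<n_2<\cdots$ of zero density, at which the symbol ranges freely over a large finite set of big integers. Zero density guarantees that inserting arbitrary symbols neither disturbs the ergodic averages of bounded continuous functions nor causes escape of mass, so genericity with respect to all $f\in C_b(X)$ is preserved; the free branching, controlled through the Gibbs property so that the cylinder diameters factor correctly across the inserted symbols, produces a Cantor subset of $G_\mu$ of $\rho_\nu$-dimension at least $t$. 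Letting $t\uparrow\alpha_\nu$ gives the claim.

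For the upper bound I would cover $G_\mu$ by the cylinders $[x_1\cdots x_n]$ over the $\mu$-typical $n$-blocks, i.e.\ those whose $k$-block frequencies are $\varepsilon$-close to those of $\mu$; since each $x\in G_\mu$ has all sufficiently long prefixes typical, this reduces matters (after a countable decomposition by the threshold index and a passage to a disjoint subfamily of cylinders) to estimating the $s$-dimensional covering sum $\sum\nu([w])^s$ for $s>\max\{\alpha_\nu,\beta(\nu|\mu)\}$. Writing $\nu([w])^s\asymp\exp(s(S_n\varphi-nP_\varphi))$ via the Gibbs property, I would split each typical block into its bounded part and the rare occurrences of large symbols: the bounded part is capped by $\beta(\nu|\mu)$ through a large-deviations/pressure estimate, while the large-symbol part is capped by the convergence of $\sum_n\nu([n])^s$ for $s>\alpha_\nu$. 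I expect this upper bound to be the main obstacle, since it requires showing simultaneously that no extra dimension is created either by the entropy of $\mu$ (beyond $\beta(\nu|\mu)$) or by the unbounded alphabet (beyond $\alpha_\nu$); the precise way these two mechanisms combine into a maximum, rather than a sum or some interpolation, is exactly the delicate point that distinguishes this result from the finite-alphabet theory, where $\alpha_\nu=0$ and only $\beta(\nu|\mu)$ survives.
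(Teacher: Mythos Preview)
Your overall architecture matches the paper's exactly: separate lower bounds $\dim_\nu G_\mu\ge\alpha_\nu$ and $\dim_\nu G_\mu\ge\beta(\nu|\mu)$ via Billingsley's lemma on constructed subsets, and an upper bound via a covering argument that splits typical words into a bounded-alphabet part (controlled by $\beta(\nu|\mu)$ through a combinatorial entropy count) and the occurrences of large symbols (controlled by $\alpha_\nu$ through $\sum_n\nu([n])^s<\infty$). Two points, however, deserve more care than your sketch suggests.

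First, for $\dim_\nu G_\mu\ge\alpha_\nu$ you cannot graft free coordinates onto an \emph{arbitrary} $\mu$-generic backbone $z$: if the coordinates $z_n$ of the backbone grow too fast, then $-\ln\nu([z_1\cdots z_n])$ is dominated by the backbone itself and the free branching contributes nothing to the local dimension ratio. The paper first constructs (its Proposition~\ref{seed}) a generic point $z$ with $z_n\le a_n$ for any prescribed sequence $a_n\to\infty$, by concatenating long prefixes of typical points for the ergodic Markov approximations of $\mu$; only with such a \emph{slow} seed does the mass-distribution estimate on the Cantor set go through (compare the requirement \eqref{eq36}).

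Second, your Shannon--McMillan--Breiman route to $\dim_\nu G_\mu\ge\beta(\nu|\mu)$ implicitly requires $h(\nu|\mu)<\infty$ (equivalently $\int\varphi\,\mathrm{d}\mu>-\infty$), so that $\ln\nu([x_1\cdots x_n])$ grows linearly and the ratio stabilises. The paper handles the remaining case $h(\nu|\mu)=\infty$ not by a direct construction but by proving (Proposition~\ref{alpha-beta}) the inequality $\beta(\nu|\mu)\le\alpha_\nu$ in that regime, so that the already-established bound $\dim_\nu G_\mu\ge\alpha_\nu$ subsumes it. Your sketch does not distinguish these two cases, and the block-concatenation measure you describe will not directly yield the $\beta$ bound when the relative entropy is infinite.
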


Let us apply \eqref{dimensionformula} to two examples. First, when $\mu=\nu$, we have $\beta(\nu|\nu)=1$. Then $\dim_\nu G_\nu=1$. This can be obtained by the Birkhoff ergodic theorem, because $\nu$ is ergodic and of dimension $1$. Second, consider the invariant measure $\delta_x$ where $x=1^\infty$. It is easily seen that $\beta(\nu|\delta_x)=0$. Thus $\dim_\nu G_ {\delta_x}=\alpha_\nu$. This is not trivial.  It reflects the difference between the case of finite symbols and that of countable symbols.

A potential function
$\varphi$ admitting a Gibbs measure is upper bounded, so that the
integral $\int_X\varphi \, \mathrm{d}\mu$ is well defined as a number
in the interval $[-\infty,+\infty)$.
 For $\mu\in\mathcal{M}(X,T)$, define the relative entropy of $\nu$ with respect to $\mu$ by
$$h(\nu|\mu):=\limsup_{k\to\infty}-\frac{1}{k}\sum_{\omega\in\mathbb{N}^k}\mu([\omega])\ln\nu([\omega]).$$
As we shall see (see Proposition \ref{convergence}),
$$h(\nu|\mu)=P_\varphi-\int_X\varphi \, \mathrm{d}\mu.$$
 The dimension formula \eqref{dimensionformula} can be expressed by entropies. Suppose $\mu\neq\nu$. If $h(\nu|\mu)$ is finite, then  $\beta(\nu|\mu)=\frac{h_\mu}{h(\nu|\mu)}$
 (see Proposition \ref{alpha-beta}). Thus we have
$$\dim_\nu G_\mu=\max\left\{\alpha_\nu,\frac{h_\mu}{h(\nu|\mu)}\right\}.$$
If $h(\nu|\mu)$ is the infinity, we have $\alpha_\nu\geqslant \beta(\nu|\mu)$ (see Proposition \ref{alpha-beta}). It follows that
$$\dim_\nu G_\mu=\alpha_\nu.$$

 Let us present the idea of the proof. In the case of $\mu=\nu$, we have $\dim_\nu G_\nu=1$ because the Gibbs measure $\nu$ is ergodic and of dimension $1$. In the case of $\mu\neq\nu$, we first prove that  for any $\mu\in\mathcal{M}(X,T)$ there exists a sequence of ergodic Markov measures $\{\mu_j\}_{j\geqslant1}$ which converge in $w^*$-topology to $\mu$, and $h_{\mu_j}$ tends to $h_\mu$ whenever $h_\mu$ is finite.  Second, we show that $\alpha_\nu$ is a universal lower bound  by constructing a Cantor subset of $G_\mu$. For the other part of lower bound we distinguish two cases: in the case of $h(\nu|\mu)<+\infty$, we construct a subset of $G_\mu$ by using the sequence of ergodic Markov measures $\{\mu_j\}_{j\geqslant1}$ and show $\dim_\nu G_\mu\geqslant \frac{h_\mu}{h(\nu|\mu)} $; in the case of $h(\nu|\mu)=+\infty$, we
   show  $\alpha_\nu\geqslant\beta(\nu|\mu).$  For the upper bound, we adapt a standard argument
   by using an estimation  on the entropy of subword distribution which has combinatoric feather (see \cite{GW}).

In 1973, Bowen considered the set of generic points  $G_\mu$ in the setting
of topological dynamical system $T:X\to X$ over compact metric space $X$.
Bowen (\cite{bowen}) proved that for any $T$-invariant Borel
probability measure $\mu$ the topological  entropy of the set of generic
points  $G_\mu$ is bounded by the measure theoretic  entropy
$h_\mu$. Fan, Liao and Peyri\`ere (\cite{FLP}) showed that an
equality holds if $T$ satisfies the specification condition. In the
case of finite symbolic space, a study of the Billingsley dimension of  $G_\mu$ with respect to a shift-invariant Markov measure $\nu$ was performed by Cajar (\cite{cajar}). He proved that $\dim_\nu G_\mu$ is equal to the entropy dimension of $\nu$  with respect to $\mu$.  Olivier (\cite{Olivier}) extended this result to Billingsley dimension with respect to a shift-invariant $g$-measure. Furthermore, Ma and Wen (\cite{Mawen}) even showed that the Hausdorff and Packing measure of $G_\mu$ satisfy
a zero-infinity law. On the other hand, Gurevich and Tempelman (\cite{GT}) consider $G_\mu$ on high-dimensional finite symbolic systems. They evaluated the Hausdorff dimension of $G_\mu$ with respect to a wide class of metrics including Billlingsley metrics generated by Gibbs measures. Actually, there have been  many works done on
the  generic points set (\cite{cajar,FAN-FENG,GT,PS1,PS2}, see also  the
references therein).
 In the case of infinite symbolic space, the situation changes. Liao, Ma and Wang (\cite{LMW}) considered the set of continued fractions with maximal frequency oscillation. They proved that the set possessed  Hausdorff dimension $\frac{1}{2}$. This constant $\frac{1}{2}$ was first observed there. Fan, Liao and Ma (\cite{FLM}) considered sets of real numbers in $[0,1)$ with prescribed frequencies of partial quotients in their regular continued fraction expansions. They showed that $\frac{1}{2}$ is a universal lower bound of the Hausdorff dimensions of  these frequency sets. Furthermore,  Fan, Liao, Ma and Wang (\cite{FLMW}) considered the  Hausdorff dimension of Besicovitch-Eggleston subsets in countable symbolic space. They found that the dimensions  possess a universal lower bound depending only on the underlying metric.

 Later, Fan, Jordan, Liao  and Rams (\cite{FJLR})  considered  expanding interval maps with infinitely many branches. They obtained  multifractal decompositions based on Birkhoff averages for a class of  continuous functions with respect to the Eulidean metric.

Theorem \ref{thm} will be applied to study  the generic points of invariant measures in the Gauss dynamics which is related to the continued fractions.  Actually, our result can be applied to a class of expanding interval mapping system. Recall that the Gauss transformation $S:[0,1)\to[0,1)$ is defined by$$S(0):=0, ~ S(x):=\frac{1}{x}-\left\lfloor\frac{1}{x}\right\rfloor,~\forall x\in(0,1).$$
Let $\ell$ be an $S$-invariant Borel probability measure on $[0,1)$ and let $\mathcal{G}_\ell$ be the set of $\ell$-generic points. Consider the potential function $$\phi_s(x)=-s\ln|S'(x)|=2s\ln x$$ for $s>\frac{1}{2}$. The Gauss system is naturally coded by $\mathbb {N}^{\mathbb{N}}$.  It is  known that $\phi_s$ has summable variations and admits  a unique Gibbs measure $\eta_s$ whose convergence exponent is denoted by~ $\alpha_s$. By a standard technique of transferring dimension results from the symbolic space to the interval $[0,1)$, we  obtain the following result.
 \begin{theorem} \label{cfs}
                Let $\ell\in\mathcal{M}([0,1), S)$ be an $S$-invariant Borel probability measure and  $s>\frac{1}{2}$. If  $-\int_0^1\ln {x}\, \mathrm{d}\ell(x)<\infty$, then
                $$\dim_{\eta_s}\mathcal{G}_{\ell}=\max\left\{\alpha_s,
                \frac{h_{\ell}}{P_{\phi_s}-2s\int_0^1\ln {x}\, \mathrm{d}\ell(x)}\right\};$$
                otherwise, we have
                $$\dim_{\eta_s}\mathcal{G}_{\ell}=\alpha_s;$$

   \end{theorem}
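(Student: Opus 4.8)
The plan is to transport Theorem~\ref{thm} from the countable symbolic space to the interval through the continued fraction coding. Let $\pi:\mathbb{N}^{\mathbb{N}}\to[0,1)$ be the map sending $(a_1,a_2,\dots)$ to the continued fraction $[0;a_1,a_2,\dots]$. It is a homeomorphism of $\mathbb{N}^{\mathbb{N}}$ onto the irrationals of $[0,1)$, it carries the cylinder $[a_1\cdots a_n]$ onto the rank-$n$ fundamental interval of the continued fraction algorithm, and it conjugates the shift with the Gauss map, $S\circ\pi=\pi\circ T$. I would first record the measure-theoretic dictionary attached to $\pi$: the Gibbs measure $\nu$ of the potential $\varphi:=\phi_s\circ\pi$ on $\mathbb{N}^{\mathbb{N}}$ satisfies $\pi_*\nu=\eta_s$, and the pullback $\mu$ of $\ell$ (the measure with $\pi_*\mu=\ell$) is $T$-invariant because $\ell$ is $S$-invariant. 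Since $\pi$ is a measurable isomorphism intertwining the two systems and both the cylinder partition and the continued fraction partition generate, one has $h_\mu=h_\ell$; moreover $\int_{\mathbb{N}^{\mathbb{N}}}\varphi\,\mathrm{d}\mu=\int_0^1\phi_s\,\mathrm{d}\ell=2s\int_0^1\ln x\,\mathrm{d}\ell(x)$ and $P_\varphi=P_{\phi_s}$.

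Next I would reduce the statement to a single application of Theorem~\ref{thm}. The measure $\eta_s$ induces on $[0,1)$ the metric $\rho_{\eta_s}$ built from fundamental intervals exactly as $\rho_\nu$ is built from cylinders; because $\pi$ maps cylinders bijectively onto fundamental intervals and $\nu([a_1\cdots a_n])$ equals the $\eta_s$-mass of the corresponding interval, $\pi$ is an isometry of $(\mathbb{N}^{\mathbb{N}},\rho_\nu)$ onto $([0,1)_{\mathrm{irr}},\rho_{\eta_s})$. As the discarded rationals form a countable set of zero Billingsley dimension, this yields
\[
\dim_{\eta_s}\mathcal{G}_\ell=\dim_\nu G_\mu .
\]
Since $\pi$ and $\pi^{-1}$ are continuous and push empirical measures to empirical measures, a point is $\ell$-generic essentially iff its code is $\mu$-generic, so $\mathcal{G}_\ell$ and $G_\mu$ correspond under $\pi$. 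Theorem~\ref{thm} then gives $\dim_\nu G_\mu=\max\{\alpha_\nu,\beta(\nu|\mu)\}$, and $\alpha_\nu=\alpha_s$ because $\nu([n])$ equals the $\eta_s$-mass of the first-level interval $(1/(n+1),1/n)$, so the two convergence exponents coincide.

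Finally I would unwind $\beta(\nu|\mu)$ through the entropy formulation. By Proposition~\ref{convergence}, $h(\nu|\mu)=P_\varphi-\int\varphi\,\mathrm{d}\mu=P_{\phi_s}-2s\int_0^1\ln x\,\mathrm{d}\ell(x)$, and since $\ln x\le 0$ the hypothesis $-\int_0^1\ln x\,\mathrm{d}\ell(x)<\infty$ is exactly $h(\nu|\mu)<\infty$. In that case Proposition~\ref{alpha-beta} gives $\beta(\nu|\mu)=h_\mu/h(\nu|\mu)=h_\ell/\bigl(P_{\phi_s}-2s\int_0^1\ln x\,\mathrm{d}\ell(x)\bigr)$, which is the first displayed formula (the degenerate case $\ell=\eta_s$ also returns the value $1$ by the variational principle for the equilibrium state); when $-\int_0^1\ln x\,\mathrm{d}\ell(x)=\infty$, i.e. $h(\nu|\mu)=\infty$, the same proposition gives $\alpha_\nu\ge\beta(\nu|\mu)$ and hence $\dim_{\eta_s}\mathcal{G}_\ell=\alpha_\nu=\alpha_s$.

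The main obstacle I expect is the legitimacy of the transfer itself, on two fronts. First, one must be sure the Billingsley dimension is invariant under $\pi$: with $\rho_{\eta_s}$ defined intrinsically from fundamental intervals this is immediate, but if $\dim_{\eta_s}$ is read against the Euclidean metric weighted by $\eta_s$ one needs the bounded distortion (R\'enyi) estimates for continued fractions to compare Euclidean balls with fundamental intervals. Second, and more seriously, $\mathbb{N}^{\mathbb{N}}$ is not compact while $[0,1)$ is: an orbit can spend a vanishing but positive fraction of its time on arbitrarily large digits, so its symbolic empirical measures may lose mass at infinity while the interval empirical measures still converge to $\ell$. Thus $\pi(G_\mu)\subseteq\mathcal{G}_\ell$ may be strict, and the upper bound requires showing that the escaping part $\mathcal{G}_\ell\setminus\pi(G_\mu)$ has $\eta_s$-Billingsley dimension at most $\alpha_s$, so that it does not exceed the asserted maximum. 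This is precisely the point where the convergence exponent $\alpha_s$ re-enters, and where the countable-alphabet phenomenon departs from the finite-alphabet case.
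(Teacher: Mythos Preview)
Your approach is essentially identical to the paper's: pull back $\eta_s$ and $\ell$ through the continued-fraction coding $\kappa$, observe that $\nu_s([\omega])=\eta_s(\Delta(\omega))$ forces $\dim_{\eta_s}\kappa(A)=\dim_{\nu_s}A$, check $\mathcal{G}_\ell=\kappa(G_\theta)$, and then apply Theorem~\ref{thm} together with the discussion following it (equivalently Proposition~\ref{alpha-beta}). Your second ``obstacle'' is not one: by Proposition~\ref{weak-d} and Lemma~\ref{cr}, a point lies in $G_\theta$ iff its orbit measures converge on every cylinder, and a point lies in $\mathcal{G}_\ell$ iff its orbit measures converge on every fundamental interval; since $\kappa$ matches cylinders with fundamental intervals and their measures, the two conditions are literally the same, so $\mathcal{G}_\ell=\kappa(G_\theta)$ with no leftover set to bound.
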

Remark that the Gurevich pressure $P_{\phi_s}$ equals the infinity if $s\leqslant\frac{1}{2}$ and
  the case $s=1$ of Theorem \ref{cfs} corresponds to    Theorem $1.2$ in \cite{FJLR}.

The paper is organized as follows. In Section 2, we give some preliminaries.   Section 3 is devoted to the construction of a $\mu$-generic  point $x=(x_n)_{n\geqslant1}\in G_\mu$   satisfying $x_n\leqslant a_n$, where $\{a_n\}_{n\geqslant1}$ is a sequence of positive integers tending to the infinity. Using  this point as seed we construct a Cantor subset of $G_\mu$  and we obtain the lower bound for $\dim_\nu G_\mu$ in  Section 4. Section 5 is concerned with the upper bound for $\dim_\nu G_\mu$.
In Section 6, Theorem \ref{thm} is applied to   a class of expanding interval dynamics including the Gauss dynamics and Theorem 2 is proved there.

\section{Preliminaries }

In this section, we will make some preparations:  introducing  a metric to describe the $w^*$-convergence in $\mathcal{M}(X)$,  discussing Gibbs measures and defining the convergence exponent of a given measure,  approximating a $T$-invariant measure by ergodic Markov measures and  discussing the relative entropy of Gibbs measure with respect to a given $T$-invariant measure,  approximating the above mentioned Markov measure by orbit measures.

 First of all, let us begin with some notation.  We denote by $X$ the countable symbolic space $\mathbb{N}^\mathbb{N}$ endowed with the product topology and define the {\em shift  map} $T:X\to X$ by $$
 (Tx)_n=x_{n+1}.
   $$
   An element $(x_1\cdots x_n)\in\mathbb{N}^n$ is called an $n$-length {\em word}. Let $\mathcal{A}^*=\bigcup_{n=0}^\infty\mathbb{N}^n$ stand for the set of all finite  words, where $\mathbb{N}^0$ denotes the set of  empty word. Given $x=(x_1x_2\cdots)\in X$ and
 $m\geqslant n \geqslant1$,  $$x|_n^m=(x_n\cdots x_m)$$
  denotes a subword of $x$.   For  $\omega=(\omega_1\cdots \omega_n)\in\mathbb{N}^n$, the $n$-{\em cylinder} $[\omega]$ is defined by
  $$ [\omega]=
   \{x\in X:x|_1^n=\omega\}.
    $$
    We will denote by  $\mathcal{C}^n$  the set of all $n$-cylinders for $n\geqslant0$. There is a one-to-one correspondence between $\mathbb{N}^n$ and $\mathcal{C}^n$. Let $\mathcal{C}^*=\bigcup_{n=0}^\infty\mathcal{C}^n$ denote the set of all cylinders. For $j,N\geqslant1$ we will write  $$
    \Sigma_N^j=\{1,\cdots,N\}^j, \qquad \mathcal{C}_N^j=\{[\omega]:\omega\in\Sigma_N^j\}.
    $$
\subsection{Metrization of the $w^*$-topology }
Recall that $\mathcal{M}(X)$ denotes the set of Borel
probability measures on $X$. We endow $\mathcal{M}(X)$ with the $w^*$-topology induced by $C_b(X)$.
Let us introduce a metric to describe the $w^*$-topology of $\mathcal{M}(X)$.

For every cylinder $[\omega]\in\mathcal{C}^*$, we choose a positive number $a_{[\omega]}$ so that
$$\sum_{[\omega]\in\mathcal{C}^*} a_{[\omega]}=1,$$
where the sum is taken over all cylinders.
For $\mu,\nu\in \mathcal{M}(X),$ define
$$d^*(\mu,\nu)=\sum_{[\omega]\in\mathcal{C}^*}{a_{[\omega]}|\mu([\omega])-\nu([\omega])|}.$$
The following proposition shows that the metric $d^*$ is compatible with the $w^*$-topology of $\mathcal{M}(X)$.

\begin{proposition} \label{weak-d}
Let $\{\mu_n\}_{n\geqslant1}\subset \mathcal{M}(X)$ and $\mu\in \mathcal{M}(X)$.~ Then~$\mu_n$ converges in $w^*$-topology to $\mu$ if and only if  $\lim_{n\to\infty}d^*(\mu_n,\mu)=0.$
\end{proposition}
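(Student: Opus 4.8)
The plan is to show the two implications separately, exploiting the fact that the weights $a_{[\omega]}$ are summable and strictly positive. The key structural observation is that $w^*$-convergence in $\mathcal{M}(X)$, for this countable symbolic space, is equivalent to convergence on every cylinder function, i.e. $\mu_n([\omega]) \to \mu([\omega])$ for all $[\omega] \in \mathcal{C}^*$; once this reduction is in place, both directions follow by elementary summability arguments.

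First I would prove the easier direction, that $w^*$-convergence implies $d^*(\mu_n,\mu)\to 0$. Each cylinder $[\omega]$ is both open and closed in the product topology, so its indicator $1_{[\omega]}$ is a bounded continuous function and hence lies in $C_b(X)$. Therefore $w^*$-convergence gives $\mu_n([\omega]) \to \mu([\omega])$ for every fixed cylinder. To pass to the sum defining $d^*$, I would use a standard dominated-convergence / $\varepsilon$-tail argument: since $\sum_{[\omega]} a_{[\omega]} = 1$ converges, choose a finite set $F \subset \mathcal{C}^*$ of cylinders with $\sum_{[\omega]\notin F} a_{[\omega]} < \varepsilon/2$; the tail of $d^*(\mu_n,\mu)$ over $\mathcal{C}^*\setminus F$ is bounded by $\sum_{[\omega]\notin F} a_{[\omega]}\cdot(\mu_n([\omega])+\mu([\omega])) \leqslant 2\sum_{[\omega]\notin F} a_{[\omega]} < \varepsilon$, uniformly in $n$, while the finite head over $F$ tends to $0$ because each of the finitely many terms does. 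Hence $\limsup_n d^*(\mu_n,\mu) \leqslant \varepsilon$ for every $\varepsilon$, giving $d^*(\mu_n,\mu)\to 0$.

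For the converse, suppose $d^*(\mu_n,\mu)\to 0$. Since every $a_{[\omega]}>0$, the inequality $a_{[\omega]}|\mu_n([\omega])-\mu([\omega])| \leqslant d^*(\mu_n,\mu)$ forces $\mu_n([\omega])\to\mu([\omega])$ for each individual cylinder. The real work is upgrading cylinder-convergence to $w^*$-convergence, i.e. to $\int f\,\mathrm{d}\mu_n \to \int f\,\mathrm{d}\mu$ for every $f\in C_b(X)$. Here the main obstacle is non-compactness of $X=\mathbb{N}^\mathbb{N}$: unlike the finite-alphabet case, mass can escape to infinity, so one cannot simply invoke that cylinder functions are dense in $C_b(X)$ in the uniform norm (they are not), and one must control $f$ on the ``large-symbol'' region. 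My plan is to fix $\varepsilon>0$ and $f\in C_b(X)$ with $\|f\|_\infty = M$, and approximate $f$ uniformly by a function that depends only on finitely many coordinates taking finitely many values. Concretely, using continuity of $f$ together with the product topology, I would find $N$ and $k$ so that $f$ varies by at most $\varepsilon$ across each cylinder in $\mathcal{C}_N^k$, and choose the single ``tail'' region $X\setminus\bigcup_{\omega\in\Sigma_N^k}[\omega]$; the delicate point is that this tail region is a countable union of cylinders whose total $\mu$-mass and $\mu_n$-mass must be shown to be small simultaneously.

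To handle the tail uniformly in $n$, I would use cylinder-convergence on a well-chosen finite collection to control the escaping mass: since $\sum_{\omega\in\Sigma_N^k}\mu([\omega]) \to 1$ as $N\to\infty$ and the analogous sum for $\mu_n$ converges termwise to it, one shows that for $N$ large the complementary mass $\mu_n(X\setminus\bigcup_{\omega\in\Sigma_N^k}[\omega])$ is uniformly small for all large $n$ (a Scheff\'e-type or tightness argument: termwise convergence of probabilities summing to $1$ gives convergence of the complementary tails, uniformly past some index $n_0$). Combining the three estimates — the uniform-approximation error $\varepsilon$, the tail-mass error $M\cdot\varepsilon$, and the finite-head error from $\big|\sum_{\omega\in\Sigma_N^k}(\mu_n([\omega])-\mu([\omega]))f(\omega)\big|\to 0$ — yields $\big|\int f\,\mathrm{d}\mu_n - \int f\,\mathrm{d}\mu\big| = O(\varepsilon)$, completing the proof. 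I expect the tightness step controlling the escape of mass to be the genuinely substantive part, whereas everything else is bookkeeping with the summable weights.
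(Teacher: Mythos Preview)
Your forward direction matches the paper exactly: the clopen cylinders give termwise convergence, and the summable weights $a_{[\omega]}$ let you split $d^*$ into a finite head plus a uniformly small tail.

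For the converse, the paper takes a shorter path. Having established $\mu_n([\omega])\to\mu([\omega])$ for every cylinder, it invokes a lemma from Billingsley: if a family $\mathcal{A}$ of sets is closed under finite intersections, every open set is a countable union from $\mathcal{A}$, and $\mu_n(A)\to\mu(A)$ for all $A\in\mathcal{A}$, then $\mu_n\to\mu$ weakly. The cylinders satisfy all three hypotheses, so the converse follows in one line. Your route via uniform approximation and explicit tightness is genuinely different---more self-contained, but longer and with two points that need tightening. First, the assertion that one can ``find $N$ and $k$ so that $f$ varies by at most $\varepsilon$ on each cylinder in $\mathcal{C}_N^k$'' is true but not immediate from continuity alone, since $f\in C_b(X)$ need not be uniformly continuous on the noncompact $X$; you need something like Dini's theorem on the compact subshift $K_N=\{1,\dots,N\}^{\mathbb N}$ applied to the monotone sequence $\phi_k(x)=\mathrm{osc}_{[x|_1^k]}f\searrow 0$. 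Second, the order of quantifiers is delicate: you want $N$ large for the tail mass and $k$ large (depending on $N$) for the oscillation, but as written you seem to fix $N,k$ for the oscillation first and then enlarge $N$ for the tail, which is circular. The clean fix is to choose $N$ first so that $\mu(K_N)>1-\varepsilon$ (this makes the tail $\mu$-mass small for \emph{every} $k$, since $\bigcup_{\omega\in\Sigma_N^k}[\omega]\supset K_N$), then choose $k=k(N)$ via Dini, and finally pick $n_0$ so that the finite sum $\sum_{\omega\in\Sigma_N^k}\mu_n([\omega])$ is within $\varepsilon$ of its limit. With these adjustments your argument is complete; the paper's citation of the Billingsley lemma simply packages the portmanteau step and sidesteps both issues.
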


\begin{proof}
Suppose  $\mu_n$ converges in $w^*$-topology to $\mu$, so that $\lim_{n\to \infty}\mu_n([\omega])=\mu([\omega])$ for any $\omega \in \mathcal{A}^*$.
Let $\epsilon>0$. Since $(a_{[\omega]})_{[\omega]\in\mathcal{C}^*}$ is a probability on the set of all cylinders, there exists a large integer $K\geqslant1$ such that
 \begin{equation}\label{equa-fin}
 \sum_{[\omega]\notin\mathcal{D}_{K}}a_{[\omega]}\leqslant\frac{\epsilon}{4},
 \end{equation}
where $ \mathcal{D}_K=\{[\omega]\in\mathcal{C}_K^m:1\leqslant m\leqslant K\}.$ Since $\mathcal{D}_{K}$ is a finite set, we can find a positive integer $N\geqslant1$ such that for any $n\geqslant N$ we have
\begin{equation}\label{equa-fin2}
\sum_{[\omega]\in\mathcal{D}_{K}}|\mu_{n}([\omega])-\mu([\omega])|\leqslant\frac{\epsilon}{2}.
\end{equation}
Then, by (\ref{equa-fin}), (\ref{equa-fin2}) and the fact that $\mu_n([\omega])\leqslant1$, we have
\begin{eqnarray*}
d^*(\mu_{n},\mu)
=(\sum_{[\omega]\notin\mathcal{D}_{K}}+
\sum_{[\omega]\in\mathcal{D}_{K}})\  {a_{[\omega]}|\mu_{n}([\omega])-\mu([\omega])|}\leqslant\epsilon
\end{eqnarray*}
for $n\geqslant N$.
Thus we have proved  $\lim_{n\to\infty }d^*(\mu_n,\mu)=0.$

Conversely, suppose $\lim_{n\to\infty }d^*(\mu_n,\mu)=0.$  This implies immediately that  for any $[\omega]\in\mathcal{C}^*$,
 $\mu_n([\omega])\longrightarrow \mu([\omega])$  as $n\to\infty$ . We finish the proof  by using the following lemma which can be found in (\cite{B},~p.17).

\begin{lemma} \label{cr}
Let $Y$ be a metric space and $\mathcal{A}$ be a family of subsets of $Y$.  Then $\mu_n$ converges in $w^*$-topology to $\mu$ if the following conditions are satisfied:
\begin{enumerate}
\item[(1)]~ $\mathcal{A}$ is closed under the  finite intersection,
 \item[(ii)] any open set $U$ can be written as~$U=\bigcup_{n=1}^{\infty}{A_n}$ with $A_n\in\mathcal{A}$,
 \item[(iii)]for any $A\in\mathcal{A}$,
$\lim_{n\to\infty}\mu_n(A)=\mu(A)$.

\end{enumerate}
\end{lemma}
The set of cylinders has the above properties of $\mathcal{A}$.   \end{proof}

 For any $x\in X$ and $n\geqslant1,$ define the orbit measure    $$\Delta_{x,n}:=\frac{1}{n}\sum_{i=0}^{n-1}\delta_{T^ix}.$$     By Proposition \ref{weak-d}, we can rewrite $G_\mu$ as
$$G_\mu=\{x\in X:\Delta_{x,n}\stackrel{d^*}{\longrightarrow}\mu~\textmd{as}~ n\to\infty \}.$$

The metric $d^*$ can be extended to finite symbolic measure space over $X$ and has the sub-linearity described in the following proposition which will be useful in the sequel.

\begin{proposition}   \label{aff}

For any  $\mu_1,\mu_2,\nu_1,\nu_2\in \mathcal{M}(X)$ and any $\alpha,\beta\in\mathbb{R}$, we have
$$d^*(\alpha\mu_1+\beta\mu_2,\alpha\nu_1+\beta\nu_2)\leqslant|\alpha| d^*(\mu_1,\nu_1)+|\beta| d^*(\mu_2,\nu_2).$$
\end{proposition}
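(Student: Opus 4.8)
The plan is to reduce the claimed sublinearity to a cylinder-by-cylinder estimate and then sum against the weights $a_{[\omega]}$. First I would fix an arbitrary cylinder $[\omega]\in\mathcal{C}^*$ and exploit that evaluation on a fixed set is linear in the measure: since $(\alpha\mu_1+\beta\mu_2)([\omega])=\alpha\mu_1([\omega])+\beta\mu_2([\omega])$, and likewise for the $\nu_i$, the difference factors as
$$(\alpha\mu_1+\beta\mu_2)([\omega])-(\alpha\nu_1+\beta\nu_2)([\omega])=\alpha\bigl(\mu_1([\omega])-\nu_1([\omega])\bigr)+\beta\bigl(\mu_2([\omega])-\nu_2([\omega])\bigr).$$
Applying the ordinary triangle inequality and homogeneity of the absolute value then gives, for each single cylinder,
$$\bigl|(\alpha\mu_1+\beta\mu_2)([\omega])-(\alpha\nu_1+\beta\nu_2)([\omega])\bigr|\leqslant|\alpha|\,\bigl|\mu_1([\omega])-\nu_1([\omega])\bigr|+|\beta|\,\bigl|\mu_2([\omega])-\nu_2([\omega])\bigr|.$$

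Next I would multiply this pointwise bound by the nonnegative weight $a_{[\omega]}$ and sum over all cylinders $[\omega]\in\mathcal{C}^*$. Since $\sum_{[\omega]}a_{[\omega]}=1$ and every measure occurring takes values in $[0,1]$ on cylinders, each of the three series involved is dominated by $\sum_{[\omega]}a_{[\omega]}<\infty$ and hence converges absolutely; this is precisely the point that lets us split the sum of the right-hand side into two separate series and pull out the scalars $|\alpha|$ and $|\beta|$. Carrying this out yields
$$d^*(\alpha\mu_1+\beta\mu_2,\alpha\nu_1+\beta\nu_2)\leqslant|\alpha|\sum_{[\omega]\in\mathcal{C}^*}a_{[\omega]}\bigl|\mu_1([\omega])-\nu_1([\omega])\bigr|+|\beta|\sum_{[\omega]\in\mathcal{C}^*}a_{[\omega]}\bigl|\mu_2([\omega])-\nu_2([\omega])\bigr|,$$
and recognizing the two series as $d^*(\mu_1,\nu_1)$ and $d^*(\mu_2,\nu_2)$ finishes the argument.

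There is no genuine analytic obstacle here; the statement is essentially the observation that $d^*$ is built from an $\ell^1$-type weighted norm, which is automatically sublinear. The only point that requires a word of care is the extension mentioned just before the statement, namely that $\alpha\mu_1+\beta\mu_2$ need no longer be a probability measure but only a finite signed combination. I would therefore note at the outset that $d^*$ extends verbatim to such signed set-functions and remains finite, since the absolute-convergence bound above does not use positivity of the combined measures but only the uniform bound on their values on cylinders together with summability of the weights. With that remark in place, the term-by-term triangle inequality and the rearrangement of absolutely convergent series are the whole content of the proof.
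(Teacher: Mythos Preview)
Your argument is correct and is exactly what the paper has in mind: it simply records that the inequality ``follows immediately from the definition of the metric $d^*$,'' and your cylinder-by-cylinder triangle inequality followed by summing against the weights $a_{[\omega]}$ is precisely that unpacking. There is nothing to add.
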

\begin{proof}

It follows immediately from the definition of the metric $d^*$.
\end{proof}

The following proposition shows that two orbit measures approach each other, even uniformly, when
the two orbit approach each other (under the Bowen metric).

\begin{proposition} \label{measure uniform}
 The following equality  holds:
\begin{equation}
\lim_{n\to\infty}\sup_{x|^n_1=y|^n_1}d^*(\Delta_{x,n},\Delta_{y,n})=0.
\end{equation}

\end{proposition}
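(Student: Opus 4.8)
The plan is to reduce everything to a single combinatorial count. Observe that for a word $\omega$ of length $k$, the value $\Delta_{x,n}([\omega])$ is exactly $\frac{1}{n}$ times the number of indices $i\in\{0,1,\dots,n-1\}$ for which $x_{i+1}\cdots x_{i+k}=\omega$, since $\Delta_{x,n}([\omega])=\frac{1}{n}\sum_{i=0}^{n-1}1_{[\omega]}(T^ix)$ and $T^ix\in[\omega]$ precisely when $x_{i+1}\cdots x_{i+k}=\omega$; the same description holds for $y$. Because $x$ and $y$ share their first $n$ symbols, the subword $x_{i+1}\cdots x_{i+k}$ equals $y_{i+1}\cdots y_{i+k}$ whenever $i+k\leqslant n$, i.e. whenever $0\leqslant i\leqslant n-k$. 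Hence the only indices at which the two counts can disagree are $i\in\{n-k+1,\dots,n-1\}$, of which there are at most $k-1$. This yields the basic estimate, valid for every $k$-cylinder $[\omega]$ with $k\leqslant n$,
$$\abs{\Delta_{x,n}([\omega])-\Delta_{y,n}([\omega])}\leqslant\frac{k-1}{n},$$
while for every cylinder one has trivially $\abs{\Delta_{x,n}([\omega])-\Delta_{y,n}([\omega])}\leqslant1$, both quantities lying in $[0,1]$.

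Next I would organize the sum defining $d^*$ according to cylinder length. Writing $A_k:=\sum_{[\omega]\in\mathcal{C}^k}a_{[\omega]}$ for the total weight carried by the $k$-cylinders, the normalization $\sum_{[\omega]\in\mathcal{C}^*}a_{[\omega]}=1$ becomes $\sum_{k=0}^\infty A_k=1$. Grouping terms by $k$,
$$d^*(\Delta_{x,n},\Delta_{y,n})=\sum_{k=0}^\infty\ \sum_{[\omega]\in\mathcal{C}^k}a_{[\omega]}\,\abs{\Delta_{x,n}([\omega])-\Delta_{y,n}([\omega])}.$$
Fix $\epsilon>0$. Since $\sum_k A_k$ converges, choose $K$ with $\sum_{k>K}A_k<\epsilon/2$; the contribution of the lengths $k>K$ is then bounded by $\sum_{k>K}A_k<\epsilon/2$ using only the trivial bound, uniformly in $n,x,y$. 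For the lengths $1\leqslant k\leqslant K$, as soon as $n>K$ the estimate $\frac{k-1}{n}$ applies to every term, so this part is at most $\sum_{k=1}^K\frac{k-1}{n}A_k\leqslant\frac{K}{n}\sum_{k=1}^KA_k\leqslant\frac{K}{n}$, which is $<\epsilon/2$ once $n>2K/\epsilon$. The empty word contributes $0$, as both measures are probabilities.

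Combining the two pieces, for all $n$ larger than some $N=N(\epsilon)$ and all $x,y$ with $x|_1^n=y|_1^n$ one gets $d^*(\Delta_{x,n},\Delta_{y,n})<\epsilon$; since the bound does not depend on the particular pair, the supremum is $\leqslant\epsilon$ for $n\geqslant N$, and letting $\epsilon\to0$ gives the claim. The one point requiring care — and the reason this is not a one-line estimate — is that the weights $a_{[\omega]}$ are arbitrary, so $\sum_k kA_k$ may well diverge; one cannot simply bound every term by $\frac{k-1}{n}$ and sum over all $k$. The remedy is precisely the split above: the short cylinders are controlled by the factor $1/n$ with the length frozen below $K$, while the long cylinders are absorbed into the convergent tail of $\sum_k A_k$ through the crude bound $\leqslant1$.
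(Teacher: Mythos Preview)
Your proof is correct and follows essentially the same approach as the paper: both introduce $A_k=\sum_{[\omega]\in\mathcal{C}^k}a_{[\omega]}$, split into short cylinders ($k\leqslant K$), controlled by a factor of order $K/n$, and long cylinders ($k>K$), absorbed into the convergent tail $\sum_{k>K}A_k$. The only organizational difference is that the paper first applies the sub-linearity of $d^*$ (Proposition~\ref{aff}) to reduce to bounding $d^*(\delta_{T^ix},\delta_{T^iy})$ and then splits over the time index $i$, whereas you bound $|\Delta_{x,n}([\omega])-\Delta_{y,n}([\omega])|$ directly for each cylinder and split over the cylinder length; your route is slightly more self-contained in that it does not need Proposition~\ref{aff}, but the underlying estimate is the same.
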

\begin{proof}First, observe that
$$\sum_{n=0}^\infty{A_n}=1,\quad
\mbox{\rm where} \ \  A_n:=\sum_{[\omega]\in\mathcal{C}^n} a_{[\omega]}.
$$
 By the sub-additivity of $d^*$ stated in Proposition \ref{aff}, we have
\begin{eqnarray*}
d^*(\Delta_{x,n},\Delta_{y,n})&\leqslant&\frac{1}{n}\sum_{i=0}^{n-1}{d^*(\delta_{T^ix},\delta_{T^iy})}.
\end{eqnarray*}
For any integer $N$ such that $n> N\geqslant1,$ we break the above sum into two parts: $\sum_{i=0}^{n-N-1}+\sum_{i=n-N}^{n-1}.$
 Assume that $x|_1^n=y|_1^n$. By the definition of the metric $d^*$, we have
 $$\sum_{i=0}^{n-N-1}d^*(\delta_{T^ix},\delta_{T^iy})\leqslant \sum_{i=0}^{n-N-1}\sum_{j=n-i+1}^\infty A_j\leqslant (n-N)\sum_{j=N+2}^\infty A_j.$$
Then by noting  the trivial  fact that $d^*(\delta_{T^ix},\delta_{T^iy})\leqslant1,$ we have
 \begin{eqnarray*}
  d^*(\Delta_{x,n},\Delta_{y,n})&\leqslant&\frac{n-N}{n}\sum_{j= {N+2}}^\infty{A_j}+\frac{N}{n}.
 \end{eqnarray*}
Letting $n$ then $N$ tend to the infinity, we finish the proof.
\end{proof}

\subsection{Gibbs measure}\label{gibbsmeasure} We use
 Gibbs measures  to induce  metrics on $X$. The following facts about Gibbs measures can be found in \cite{Sarig2}.

 Recall that for a function $\varphi:X\to\mathbb{R}$, called {\em potential function}, the
 {\em $n$-order variation} of $\varphi$ is defined by
 $${\rm{var}}_n\varphi:=\sup\{|\varphi(x)-\varphi(y)|:x,y\in X,~x|^n_1=y|^n_1\}.$$
 We say that  a potential $\varphi$ has {\em summable variations} if $$\sum_{n=2}^\infty {\rm{var}}_n\varphi<+\infty.$$ It is easy to see that a potential $\varphi$ with summable variations is uniformly continuous on $X$.
The {\em Gurevich pressure} of $\varphi$ with summable variations is defined to be the limit
$$P_\varphi:=\lim_{n\to\infty}\frac{1}{n}\ln\sum_{T^nx=x}e^{S_n\varphi(x)}1_{[a]}(x),$$
where $a\in \mathbb{N}$ and it can be shown that the limit exists and is independent of $a$ (see \cite{Sarig0}).

An invariant probability measure $\nu$ is called a  {\bf Gibbs measure} associated to  a potential function $\varphi$ if it satisfies the Gibbsian property: there exist
 constants $C>1$ and $P\in\mathbb{R}$ such that
\begin{equation}\label{gibbs1}
\frac{1}{C}\leqslant\frac{\nu([x_1x_2\cdots x_n])}{\exp({S_n{\varphi(x)}}-nP)}\leqslant C
\end{equation}holds for any $n\geqslant1$ and any $x\in X$.
It is known (\cite{Sarig1}) that a potential function $\varphi$ with summable variations  admits a unique Gibbs measure $\nu$ iff ${\rm{var}}_1\varphi<+\infty$ and the Gurevich pressure $P_\varphi<+\infty$.
Assume that $\varphi$ admits a unique Gibbs measure $\nu_\varphi$. Then the constant       $P$ in (\ref{gibbs1}) is  equal to the Gurevich pressure $P_\varphi$.
Let $\varphi^*=\varphi-P_\varphi,$ we have
$$P_{\varphi^*}=0~\textmd{and}~ \nu_{\varphi^*}=\nu_\varphi.$$
Hence, without loss of generality, we always suppose  $P_\varphi=0$ in the rest of this paper. A trivial fact is that   the Gibbsian property (\ref{gibbs1}) implies:
 \begin{equation}\label{p-bound}
 \forall x\in X,\quad\varphi(x)\leqslant\ln C.
 \end{equation}
It follows that the integral $\int_X{\varphi} \, \mathrm{d}\mu$ is defined as a number in $[-\infty,+\infty)$ for any probability measure $\mu$.
Also, the Gibbsian property implies the quasi Bernoulli property which will be exploited many times in the present paper.
  \begin{lemma} \label{joint}
Let $\nu$ be a Gibbs measure associated to potential $\varphi$. For any $k$ words $\omega_1,\cdots,\omega_k$, we have
$$C^{-(k+1)}\nu([\omega_1\cdots\omega_k])\leqslant \nu([\omega_1])\cdots\nu([\omega_k])\leqslant C^{k+1}\nu([\omega_1\cdots\omega_k]).$$
\end{lemma}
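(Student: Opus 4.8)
The plan is to exploit the fact that we are on the full shift, so that any concatenation $\omega_1\cdots\omega_k$ of finite words is the prefix of a genuine point of $X=\mathbb{N}^{\mathbb{N}}$, and then to telescope the Birkhoff sum along the orbit of that single point. Write $n_j$ for the length of $\omega_j$, set $m_0=0$, $m_j=n_1+\cdots+n_j$, and $n=m_k$. First I would fix any point $x\in[\omega_1\cdots\omega_k]$; such a point exists precisely because the countable full shift imposes no admissibility restriction. Since $\varphi$ is real-valued, every Birkhoff sum appearing below is a finite real number, so no $\pm\infty$ issues arise.

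The key observation is twofold. On the one hand, for each $j$ the shifted point $T^{m_{j-1}}x$ begins with the word $\omega_j$, i.e.\ $T^{m_{j-1}}x\in[\omega_j]$. On the other hand, the Birkhoff sums telescope,
$$S_n\varphi(x)=\sum_{j=1}^{k} S_{n_j}\varphi\big(T^{m_{j-1}}x\big),$$
which is immediate upon regrouping $\sum_{i=0}^{n-1}\varphi(T^ix)$ into the consecutive blocks $m_{j-1}\leqslant i<m_j$. This identity is what makes the argument exact and lets us avoid any appeal to variation estimates on $\varphi$.

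Next I would apply the Gibbsian property \eqref{gibbs1} (with $P=P_\varphi=0$) in two ways. Applied to the cylinder $[\omega_j]$ at the point $T^{m_{j-1}}x$ it gives, for each $j$,
$$\frac{1}{C}\exp\big(S_{n_j}\varphi(T^{m_{j-1}}x)\big)\leqslant\nu([\omega_j])\leqslant C\exp\big(S_{n_j}\varphi(T^{m_{j-1}}x)\big).$$
Taking the product over $j=1,\dots,k$ and using the telescoping identity to recombine the exponentials into $\exp(S_n\varphi(x))$ yields
$$C^{-k}\exp\big(S_n\varphi(x)\big)\leqslant\prod_{j=1}^{k}\nu([\omega_j])\leqslant C^{k}\exp\big(S_n\varphi(x)\big).$$
Applied instead to the full cylinder $[\omega_1\cdots\omega_k]$ at the point $x$, the same property gives
$$\frac{1}{C}\exp\big(S_n\varphi(x)\big)\leqslant\nu([\omega_1\cdots\omega_k])\leqslant C\exp\big(S_n\varphi(x)\big),$$
equivalently $C^{-1}\nu([\omega_1\cdots\omega_k])\leqslant\exp(S_n\varphi(x))\leqslant C\,\nu([\omega_1\cdots\omega_k])$. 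Substituting these two bounds for $\exp(S_n\varphi(x))$ into the product estimate absorbs one extra factor of $C$ on each side and produces exactly $C^{-(k+1)}\nu([\omega_1\cdots\omega_k])\leqslant\prod_{j=1}^{k}\nu([\omega_j])\leqslant C^{k+1}\nu([\omega_1\cdots\omega_k])$, which is the claim.

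I do not expect any serious obstacle; the argument is a short manipulation of the defining inequality. The only point that requires care is the bookkeeping of the constant $C$ so that precisely $k+1$ factors appear: $k$ of them come from applying the Gibbs inequality to the $k$ individual pieces, and the remaining one from converting $\exp(S_n\varphi(x))$ back into $\nu([\omega_1\cdots\omega_k])$. It is worth emphasizing that all the Birkhoff sums are evaluated along the orbit of the \emph{same} point $x$ chosen in the big cylinder, which is exactly what renders the telescoping identity exact.
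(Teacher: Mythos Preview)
Your proof is correct and is precisely the standard derivation the paper has in mind: the paper does not spell out a proof of this lemma, merely noting that ``the Gibbsian property implies the quasi Bernoulli property,'' and your telescoping of $S_n\varphi(x)$ along a single point $x\in[\omega_1\cdots\omega_k]$ together with $k+1$ applications of \eqref{gibbs1} is exactly how one makes this implication explicit. The bookkeeping of the constant $C^{k+1}$ is right.
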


  For any $T$-invariant Borel probability measure $\mu$, define the {\em relative entropy} of $\nu$ with respect to $\mu$ by
$$h(\nu|\mu)=\limsup_{k\to\infty}-\frac{1}{k}\sum_{\omega\in\mathbb{N}^k}\mu([\omega])\ln\nu([\omega]).$$
  It is trivially true that $h(\mu|\mu)=h_\mu$.

  When $\nu$ is the Gibbs measure associated to $\varphi$,  the relative entropy $h(\nu|\mu)$ is   equal to the integral $-\int_X{\varphi} \, \mathrm{d}\mu$.

 \begin{proposition} \label{convergence}
  Assume that $\varphi$ has summable variations and admits a unique Gibbs measure $\nu$. Then for any
  invariant measure $\mu\in\mathcal{M}(X,T)$, we have
$$h(\nu|\mu)=\lim_{k\to\infty}-\frac{1}{k}\sum_{\omega\in\mathbb{N}^k}\mu([\omega])\ln\nu([\omega])
=-\int_X{\varphi} \, \mathrm{d}\mu.$$

  \end{proposition}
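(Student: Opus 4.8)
The plan is to integrate the Gibbsian inequality cylinder by cylinder against $\mu$ and then use the $T$-invariance of $\mu$ to turn the $k$-step Birkhoff sum into $k$ copies of $\int_X\varphi\,\mathrm{d}\mu$. Since we have normalized $P_\varphi=0$, the property \eqref{gibbs1} reads, for every $x\in X$ and every $n\geqslant1$,
$$\left|\ln\nu([x_1\cdots x_n])-S_n\varphi(x)\right|\leqslant\ln C.$$
First I would fix $k\geqslant1$ and a word $\omega\in\mathbb{N}^k$. For every $x\in[\omega]$ one has $[x_1\cdots x_k]=[\omega]$, so the displayed inequality holds with $\ln\nu([\omega])$ constant on the cylinder. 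Note that \eqref{p-bound} gives $S_k\varphi\leqslant k\ln C$ on all of $X$, while the same inequality forces $S_k\varphi(x)\geqslant\ln\nu([\omega])-\ln C$ for $x\in[\omega]$; hence $S_k\varphi$ is bounded on each cylinder $[\omega]$ and $\int_{[\omega]}S_k\varphi\,\mathrm{d}\mu$ is a finite number. Integrating the inequality over $[\omega]$ against $\mu$ then yields
$$\left|\mu([\omega])\ln\nu([\omega])-\int_{[\omega]}S_k\varphi\,\mathrm{d}\mu\right|\leqslant\mu([\omega])\ln C.$$

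Next I would sum this estimate over all $\omega\in\mathbb{N}^k$. Because the cylinders $\{[\omega]:\omega\in\mathbb{N}^k\}$ partition $X$, the integrals add up to $\int_X S_k\varphi\,\mathrm{d}\mu$, and the $T$-invariance of $\mu$ gives $\int_X S_k\varphi\,\mathrm{d}\mu=k\int_X\varphi\,\mathrm{d}\mu$. When $\int_X\varphi\,\mathrm{d}\mu$ is finite this produces
$$\left|\sum_{\omega\in\mathbb{N}^k}\mu([\omega])\ln\nu([\omega])-k\int_X\varphi\,\mathrm{d}\mu\right|\leqslant\ln C;$$
dividing by $k$ and letting $k\to\infty$ shows that the limit exists and equals $-\int_X\varphi\,\mathrm{d}\mu$, which in particular identifies the $\limsup$ defining $h(\nu|\mu)$.

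The step I expect to require the most care is the degenerate case $\int_X\varphi\,\mathrm{d}\mu=-\infty$, where the manipulation ``$\sum-\int$'' is not directly legitimate. Here I would retain only the one-sided consequence $\mu([\omega])\ln\nu([\omega])\leqslant\int_{[\omega]}S_k\varphi\,\mathrm{d}\mu+\mu([\omega])\ln C$ of the cylinder estimate; summing gives $\sum_{\omega}\mu([\omega])\ln\nu([\omega])\leqslant k\int_X\varphi\,\mathrm{d}\mu+\ln C=-\infty$, so that $-\tfrac1k\sum_{\omega}\mu([\omega])\ln\nu([\omega])=+\infty=-\int_X\varphi\,\mathrm{d}\mu$ for every $k$ and the claimed equality persists. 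Throughout, the bound $\varphi\leqslant\ln C$ from \eqref{p-bound} guarantees $-\ln\nu([\omega])\geqslant0$, so each sum is an unambiguous element of $[0,+\infty]$ and no rearrangement issues arise. It is worth observing that summable variations of $\varphi$ is needed only to secure the existence of the Gibbs measure; the pointwise bound \eqref{gibbs1}, integrated as above, is all that the present argument actually consumes.
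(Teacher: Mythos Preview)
Your argument is correct and takes a genuinely more direct route than the paper. The paper fixes sample points $x'_\omega\in[\omega]$, introduces the approximants $I_k(\mu)=\sum_\omega\mu([\omega])\varphi(x'_\omega)$, bounds $S_k\varphi(x_\omega)$ term by term via the variations ${\rm var}_j\varphi$ to obtain
\[
\Bigl|\tfrac1k\sum_{\omega\in\mathbb{N}^k}\mu([\omega])\ln\nu([\omega])-\tfrac1k\sum_{i=1}^k I_i(\mu)\Bigr|\leqslant\tfrac{\ln C+\sum_{j\geqslant1}{\rm var}_j\varphi}{k},
\]
and then appeals to Ces\`aro averaging of the $I_i(\mu)$ together with uniform continuity of $\varphi$. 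You bypass all of this by integrating the Gibbs inequality \eqref{gibbs1} directly over each cylinder and using the single identity $\int_X S_k\varphi\,\mathrm{d}\mu=k\int_X\varphi\,\mathrm{d}\mu$ from $T$-invariance; this is shorter and, as you note, consumes only the Gibbs bound and not the summable-variations hypothesis. One small correction: the nonnegativity $-\ln\nu([\omega])\geqslant0$ follows simply from $\nu([\omega])\leqslant1$ ($\nu$ is a probability measure), not from $\varphi\leqslant\ln C$. The paper's longer route does buy something, however: the quantities $I_k(\mu)$ and their variation estimates are reused verbatim in the proof of Proposition~\ref{int-app}, so the detour there is not wasted.
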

  \begin{proof} For each cylinder $[\omega]$, we arbitrarily choose a point
  $x'_\omega$  in $[\omega]$. Then
  for any $\lambda\in\mathcal{M}(X)$, let
  $$I_k(\lambda)=\sum_{\omega\in\mathbb{N}^k}\lambda([\omega])\varphi(x'_{\omega}).$$
In virtue of \eqref{p-bound}, the above infinite series is defined as a number in $[-\infty,+\infty)$. Furthermore, the convergence of the series implies the absolute convergence. Since $\varphi$ is uniformly continuous on $X$, for any $\lambda\in\mathcal{M}(X)$ we have
   \begin{equation}\label{Ceso}
   \int_X{\varphi} \, \mathrm{d}\lambda=\lim_{k\to\infty}I_k(\lambda)=\lim_{k\to\infty}\frac{1}{k}\sum_{i=1}^kI_i(\lambda).
   \end{equation}

  First, we assume that $\int_X{\varphi} \, \mathrm{d}\mu>-\infty.$
  For $k\geqslant1,$ by the Gibbsian property of $\nu$, we have
$$
\sum_{\omega\in\mathbb{N}^k}\mu([\omega])\ln\nu([\omega])\leqslant  \ln C+\sum_{\omega\in\mathbb{N}^k}\mu([\omega])S_k\varphi(x_\omega).
    $$
 Notice that
 $$
    S_k\varphi(x_\omega) \le [\varphi(x'_{\omega_1\cdots\omega_k})+{\rm var}_k\varphi]
            +\cdots + [\varphi(x'_{\omega_k})+ {\rm var}_{1}\varphi].
 $$
 It follows that
    \begin{eqnarray*}
    \sum_{\omega\in\mathbb{N}^k}\mu([\omega])S_k\varphi(x_\omega)
    &\le& \sum_{j=1}^{k}  {\rm var}_{j}\varphi + [\sum_{\omega_1\cdots\omega_k\in\mathbb{N}^k}\mu([\omega_1\cdots\omega_k])\varphi(x'_{\omega_1\cdots\omega_k})\\
    &+&\cdots+\sum_{\omega_k\in\mathbb{N}}\mu([\omega_k])\varphi(x'_{\omega_k})]\\
   &=& \sum_{j=1}^{k}  {\rm var}_{j}\varphi +\sum_{i=1}^kI_i(\mu).
 \end{eqnarray*}
  Finally we get
  $$\sum\limits_{\omega\in\mathbb{N}^k}\mu([\omega])\ln\nu([\omega])\le
   \ln C+  \sum_{j=1}^{\infty}  {\rm var}_{j}\varphi + \sum_{i=1}^kI_i(\mu).$$
   In the same way, we can also get the opposite inequality
  $$\sum\limits_{\omega\in\mathbb{N}^k}\mu([\omega])\ln\nu([\omega])\geqslant
  -\ln C- \sum_{j=1}^{\infty}  {\rm var}_{j}\varphi +\sum_{i=1}^kI_i(\mu).$$
  Thus
  $$\left|\frac{1}{k}\sum\limits_{\omega\in\mathbb{N}^k}\mu([\omega])\ln\nu([\omega])
  -\frac{1}{k}\sum_{i=1}^kI_i(\mu)\right|\leqslant\frac{\ln C + \sum_{j=1}^{\infty}  {\rm var}_{j}\varphi}{k}.$$
    By (\ref{Ceso}) we obtain
     $$h(\nu|\mu)=-\int_X{\varphi} \, \mathrm{d}\mu.$$

Now assume that $\int_X{\varphi} \, \mathrm{d}\mu=-\infty$. The similar argument works in combination with the following fact:  there exists a sequence of points
     $\{x'_{\omega_1\cdots\omega_k}\}$  such that
     $$\lim\limits_{k\rightarrow\infty}\sum\limits_{\omega\in\mathbb{N}^k}\mu([\omega])\varphi(x'_\omega)=-\infty.$$
   \end{proof}

By the concavity of the logarithm function $\ln$, it is easy to show
\begin{equation}\label{relativeentropy}
h(\nu|\mu)\geqslant h_\mu.
  \end{equation}
   By Proposition \ref{convergence}, we can rewrite  the  variational principle (\cite{Sarig2}, p. 86) in the following form

  \begin{equation}\label{variationalprinple}
  P_\varphi=\sup_{\mu\in\mathcal{M}(X,T)}\left\{h_\mu-h(\nu|\mu):h(\nu|\mu)<+\infty\right\}.
  \end{equation}
Recall that we assume that $P_\varphi=0$. It is known that the supremum in the variational principle \eqref{variationalprinple} is  attained only by a Gibbs measure $\nu$ with $h_\nu<\infty$ if such Gibbs measure exits (\cite{Sarig2}, p. 89). It follows that when $\nu\neq\mu$, we have $h(\nu|\mu)>h_\mu$, which implies $h(\nu|\mu)>0$.


   Recall that a Gibbs measure $\nu$  induces a metric $\rho_\nu$ on $X$:  for any $x,y\in X$, if $x=y$,
we define $\rho_\nu(x,y)=0$; otherwise $$\rho_\nu(x,y)=\nu([x|^n_1]),$$ where $n=\min\{k\geqslant0:x_{k+1}\neq  y_{k+1}\}$.    One can show   that $\rho_\nu$ is a   ultrametric  and induces the product topology on $X$ since $\nu$ is non-atomic and has $X$ as its support.
Let $n\geqslant1$ be an integer. Define
$$\delta_n=\sup\{\nu([u]):u\in\mathbb{N}^n\}.$$
The following proposition means that the $\rho_\nu$-distance of two points uniformly tends to zero
when they approach each other in the sense of Bowen.  This property  will be used in the proof for the upper bound of $\dim_\nu G_\mu.$
\begin{proposition}\label{metricuniform}
For $\{\delta_n\}_{n\geqslant1}$ defined on the above, one has
$$\lim_{n\to\infty}\delta_n=0.$$
\end{proposition}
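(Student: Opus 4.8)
The plan is to argue by contradiction, using only that $\nu$ is a non-atomic probability measure (recall, as noted just above, that the Gibbs measure $\nu$ has full support $X$ and is non-atomic). First I would record an elementary monotonicity: for any $(n{+}1)$-word $v$, its length-$n$ prefix $u$ satisfies $[v]\subseteq[u]$, whence $\nu([v])\le\nu([u])\le\delta_n$; taking the supremum over $v$ gives $\delta_{n+1}\le\delta_n$. Thus $\{\delta_n\}_{n\ge1}$ is non-increasing and bounded below by $0$, so it converges to some $\delta_\infty\ge 0$, and it suffices to prove $\delta_\infty=0$.

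Suppose instead that $\delta_\infty=\epsilon>0$. The key object is the tree of ``heavy'' cylinders
$$\mathcal{T}:=\{w\in\mathcal{A}^*:\nu([w])\ge \epsilon/2\}.$$
I would verify two structural features. First, $\mathcal{T}$ is prefix-closed: if $w'$ is a prefix of $w$ then $[w]\subseteq[w']$, so $\nu([w'])\ge\nu([w])\ge\epsilon/2$; hence $\mathcal{T}$ is a rooted subtree of $\mathcal{A}^*$, the empty word belonging to it since $\nu(X)=1$. Second, $\mathcal{T}$ is \emph{finitely branching}: a node $w$ has children $wa$ with $a\in\mathbb{N}$, and $\sum_{a\in\mathbb{N}}\nu([wa])=\nu([w])\le 1$, so at most $\lfloor 2/\epsilon\rfloor$ of them satisfy $\nu([wa])\ge\epsilon/2$. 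This finite branching is precisely the device that substitutes for the compactness one has in the finite-alphabet setting.

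Since $\delta_n\ge\delta_\infty=\epsilon>\epsilon/2$ for every $n$, for each $n$ there is an $n$-word $u$ with $\nu([u])>\epsilon/2$, so $\mathcal{T}$ has nodes at every length and is therefore infinite. Applying K\"onig's lemma to the infinite, finitely-branching tree $\mathcal{T}$ produces an infinite branch $x=x_1x_2\cdots\in X$ all of whose prefixes lie in $\mathcal{T}$, i.e. $\nu([x_1\cdots x_n])\ge\epsilon/2$ for every $n$. As $\{x\}=\bigcap_{n\ge1}[x_1\cdots x_n]$ is a decreasing intersection of cylinders, continuity of the measure from above yields $\nu(\{x\})=\lim_{n\to\infty}\nu([x_1\cdots x_n])\ge\epsilon/2>0$, contradicting the non-atomicity of $\nu$. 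Hence $\delta_\infty=0$, which is the assertion.

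I expect the only genuine subtlety to be the passage from pointwise decay $\nu([x_1\cdots x_n])\to\nu(\{x\})=0$ (immediate from non-atomicity for each fixed $x$) to the \emph{uniform} decay $\sup_{u}\nu([u])\to 0$. Over a finite alphabet this is automatic by compactness; over $\mathbb{N}^\mathbb{N}$ the space fails to be compact, and the mechanism that recovers uniformity is exactly the finite branching of the heavy-cylinder tree, valid because $\nu$ is a probability measure. The remaining ingredient is the non-atomicity of $\nu$, which I would invoke as already recorded (alternatively it follows from $T$-invariance together with ergodicity, since an atom would force either infinitely many atoms of equal mass or a measure carried by a single periodic orbit, neither of which is compatible with $\nu$ being a full-support probability measure).
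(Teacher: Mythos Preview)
Your proof is correct and follows essentially the same line as the paper's: argue by contradiction, use the probability constraint to extract a nested chain of cylinders with $\nu$-measure bounded below, and obtain an atom. The only cosmetic difference is that the paper builds the nested chain by a direct pigeonhole (among the chosen heavy cylinders $[u_n]$, either one contains infinitely many others or one could find infinitely many pairwise disjoint ones of measure $>a/2$, which is impossible), whereas you package the same finiteness via K\"onig's lemma on the tree of heavy cylinders.
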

\begin{proof}
Suppose that $\lim_{n\to\infty}\delta_n=a>0.$ Note that $\delta_n$ is non-increasing. Then there exists a sequence of cylinders $[u_n]\in \mathcal{C}^n$ so that $$\nu([u_n])>a/2.$$
Observe that two cylinders either are disjoint or one is contained in the other.
Since $\nu$ is a probability measure, there exists a cylinder $[u_{n_1}]$ which intersects infinitely many cylinders $\{[u_{n_k}]:k\geqslant1\}\subset\{[u_n]:n\geqslant1\}.$ Actually, the cylinder $[u_{n_1}]$ contains every element of $\{[u_{n_k}]:k\geqslant1\}$. By the same argument one can choose a cylinder $[u_{n_2}]$ with $n_2>n_1$ which contains infinite elements of $\{[u_{n_k}]:k\geqslant1\}$. In this way, one  choose a sequence of decreasing cylinders $\{[u_{n_j}]\}_{j\geqslant1}$ so that
$$\forall j\geqslant1,~\nu([u_{n_j}])>a/2.$$
 This contradicts the fact that the Gibbs measure $\nu$ has no atom.
\end{proof}

Remark that   Proposition \ref{metricuniform} also holds for any non-atom finite  measure $\eta$   supported on $X$.

Now we introduce several exponents which will be related to the Hausdorff dimension of $G_\mu$. Define  the  {\em convergence exponent} of $\nu$ by
   $$\alpha_\nu:=\inf\Big\{t>0:\sum_{n=1}^\infty{\nu([n])^t<+\infty}\Big\}.$$ For any $\mu\in\mathcal{M}(X,T)$, define the {\em entropy dimension} of $\nu$  with respect to $\mu$ by
     $$\beta(\nu|\mu):=\limsup_{k\rightarrow\infty}\limsup_{N\rightarrow\infty}\frac{H_{k,N}(\mu,\mu)} {H_{k,N}(\nu,\mu)},$$
     where $$
     H_{k,N}(\nu,\mu):=-\sum_{\omega\in\Sigma^k_N}\mu([\omega])\ln\nu([\omega]).
     $$
We are going to show  that if $h(\nu|\mu)<+\infty$, then $\beta(\nu|\mu)=\frac{h_\mu}{h(\nu|\mu)}$;
if $h(\nu|\mu)=+\infty$, then $\beta(\nu|\mu)\leqslant \alpha_\nu$. But first, we remark that
the convergence exponent  $\alpha_\nu$ has the following property.
\begin{lemma} \label{exponent-bound}
 Let $\alpha_\nu$ be the convergence exponent of  Gibbs measure $\nu$ associated to a potential function $\varphi$.
Then for any $\epsilon>0$ there exist  constants $C_0$ and $M$ such that
 \begin{equation}\label{uniform-bound-ineq}\sum_{\omega\in\mathbb{N}^k}\nu([\omega])^{\alpha_\nu+\epsilon}\leqslant C_0M^k,~(\forall k\geqslant1).
 \end{equation}
 \end{lemma}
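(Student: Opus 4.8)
The plan is to reduce the sum over length-$k$ words to the $k$-th power of the single-symbol sum $\sum_{n=1}^\infty \nu([n])^t$, whose finiteness for $t=\alpha_\nu+\epsilon$ is exactly what the definition of the convergence exponent provides. The bridge between a cylinder of length $k$ and its $k$ constituent symbols is precisely the quasi-Bernoulli property recorded in Lemma \ref{joint}.

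First I would fix $\epsilon>0$ and set $t=\alpha_\nu+\epsilon$. Since $t>\alpha_\nu$, the definition of the convergence exponent gives
$$M_0:=\sum_{n=1}^\infty \nu([n])^t<+\infty.$$
Next I would apply the leftmost inequality of Lemma \ref{joint} to the $k$ one-letter words $\omega_1,\dots,\omega_k\in\mathbb{N}$, which yields the submultiplicative bound
$$\nu([\omega_1\cdots\omega_k])\leqslant C^{k+1}\prod_{i=1}^k \nu([\omega_i]).$$
Raising this to the power $t$ and summing over all $\omega=\omega_1\cdots\omega_k\in\mathbb{N}^k$, the sum factorizes into a product of identical one-symbol sums:
$$\sum_{\omega\in\mathbb{N}^k}\nu([\omega])^t \leqslant C^{(k+1)t}\sum_{\omega_1,\dots,\omega_k}\prod_{i=1}^k\nu([\omega_i])^t = C^{(k+1)t}\left(\sum_{n=1}^\infty \nu([n])^t\right)^k = C^{(k+1)t}M_0^k.$$
Splitting the constant as $C^{(k+1)t}=C^t\,(C^t)^k$, this reads $\sum_{\omega\in\mathbb{N}^k}\nu([\omega])^t\leqslant C_0 M^k$ with $C_0=C^t$ and $M=C^tM_0$, which is exactly the asserted inequality \eqref{uniform-bound-ineq}.

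I expect no serious obstacle here; the only delicate points are bookkeeping ones. One must invoke the correct (left) inequality of Lemma \ref{joint}, so that the cylinder measure is bounded \emph{above} by the product of its single-symbol measures, and one must keep track of the fact that the Gibbs constant enters as $C^{k+1}$, growing geometrically in $k$; this forces us to absorb one factor $C^t$ into the exponential base $M$ rather than into the prefactor $C_0$. Finally, note that $t$ may exceed $1$ when $\epsilon$ is large, but this causes no difficulty, since $\nu([n])\leqslant 1$ and the argument nowhere uses $t\leqslant 1$.
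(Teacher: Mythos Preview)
Your proof is correct and is essentially identical to the paper's own argument: both fix $t=\gamma=\alpha_\nu+\epsilon$, use the definition of the convergence exponent to get $M_0=\sum_n\nu([n])^t<\infty$, apply the left inequality of Lemma~\ref{joint} to bound $\nu([\omega_1\cdots\omega_k])\leqslant C^{k+1}\prod_i\nu([\omega_i])$, and factorize the sum to obtain $C_0=C^t$, $M=C^tM_0$.
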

 \begin{proof}   Let  $\gamma={\alpha_\nu+\epsilon}.$  By the definition of convergence exponent of $\nu$, we have $$M_0:=\sum_{\omega\in\mathbb{N}}\nu([\omega])^{\gamma}<+\infty.$$
     By  the quasi Bernoulli property (Lemma \ref{joint}), one gets
       \begin{eqnarray*}
&&\sum_{\omega\in\mathbb{N}^k}\nu([\omega])^\gamma\leqslant C^{\gamma(k+1)}\sum_{\omega\in\mathbb{N}^k}\nu([\omega_1])^\gamma\cdots\nu([\omega_k])^\gamma=C^\gamma (C^\gamma M_0)^k.
 \end{eqnarray*}
  Taking $C_0=C^\gamma$ and $M=C^\gamma M_0$ completes the proof.
 \end{proof}


If $\mu=\nu$, it is clear that  $\beta(\nu|\mu)=1$. However, we have the following claim.
\begin{proposition} \label{alpha-beta}
Let $\mu\in\mathcal{M}(X,T)$ and  $\varphi$ be a potential function of summable variations. Assume that $\varphi$ admits a unique Gibbs measure $\nu$ with convergence exponent $\alpha_\nu$. If $\nu\neq\mu$ and $h(\nu|\mu)<+\infty$,  then
\begin{equation}\label{entropy dimension-inequality}\beta(\nu|\mu)=\frac{h_\mu}{h(\nu|\mu)};\end{equation}
if $h(\nu|\mu)=+\infty$, we have
\begin{equation}\label{alpha betainequality}
\beta(\nu|\mu)\leqslant \alpha_\nu.
\end{equation}

\end{proposition}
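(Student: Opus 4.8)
The plan is to collapse the inner limit over $N$ onto ``full'' $k$-entropies and then analyse the resulting ratio of $k$-entropies, distinguishing the two cases. For fixed $k$ write
\[
H_k(\nu,\mu):=-\sum_{\omega\in\mathbb{N}^k}\mu([\omega])\ln\nu([\omega]),\qquad H_k(\mu,\mu):=-\sum_{\omega\in\mathbb{N}^k}\mu([\omega])\ln\mu([\omega]).
\]
Since $\nu([\omega])\le1$ and $\mu([\omega])\le1$, every summand of $H_{k,N}(\nu,\mu)$ and of $H_{k,N}(\mu,\mu)$ is nonnegative, so both increase with $N$ and, by monotone convergence, tend to $H_k(\nu,\mu)$ and $H_k(\mu,\mu)$ as $N\to\infty$. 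Moreover $H_k(\nu,\mu)>0$ for every $k\ge1$ (each $\nu([\omega])<1$ and $\mu$ is a probability measure), so that whenever the limits are finite the inner ratio satisfies $\limsup_{N\to\infty}\frac{H_{k,N}(\mu,\mu)}{H_{k,N}(\nu,\mu)}=\frac{H_k(\mu,\mu)}{H_k(\nu,\mu)}$.

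First I would treat the case $h(\nu|\mu)<+\infty$. Here $-\int_X\varphi\,\mathrm{d}\mu<\infty$, so $\varphi$ is $\mu$-integrable, and the estimate established in the proof of Proposition \ref{convergence} shows $H_k(\nu,\mu)<\infty$ for every $k$; since $H_k(\nu,\mu)\ge H_k(\mu,\mu)$ (the level-$k$ form of \eqref{relativeentropy}), also $H_k(\mu,\mu)<\infty$, and the inner limit above holds. Dividing numerator and denominator by $k$ and letting $k\to\infty$, I invoke the standard identity $\tfrac1kH_k(\mu,\mu)\to h_\mu$ together with Proposition \ref{convergence}, which gives $\tfrac1kH_k(\nu,\mu)\to h(\nu|\mu)$. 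Because $\nu\neq\mu$ forces $h(\nu|\mu)>0$ (as noted after the variational principle), the ratio of the two convergent sequences converges, and
\[
\beta(\nu|\mu)=\lim_{k\to\infty}\frac{\tfrac1kH_k(\mu,\mu)}{\tfrac1kH_k(\nu,\mu)}=\frac{h_\mu}{h(\nu|\mu)}.
\]

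The heart of the argument is the case $h(\nu|\mu)=+\infty$, where I would feed Lemma \ref{exponent-bound} into a convexity estimate. Fix $\epsilon>0$, set $\gamma=\alpha_\nu+\epsilon$, and write $P_N=\sum_{\omega\in\Sigma_N^k}\mu([\omega])$. Applying Jensen's inequality to $\ln$ against the probability weights $\mu([\omega])/P_N$ on $\Sigma_N^k$,
\[
H_{k,N}(\mu,\mu)-\gamma\,H_{k,N}(\nu,\mu)=\sum_{\omega\in\Sigma_N^k}\mu([\omega])\ln\frac{\nu([\omega])^{\gamma}}{\mu([\omega])},
\]
\[
\sum_{\omega\in\Sigma_N^k}\mu([\omega])\ln\frac{\nu([\omega])^{\gamma}}{\mu([\omega])}\le P_N\ln\!\Big(\frac{1}{P_N}\sum_{\omega\in\Sigma_N^k}\nu([\omega])^{\gamma}\Big)\le P_N\ln\frac{C_0M^{k}}{P_N},
\]
the last step using $\sum_{\omega\in\mathbb{N}^k}\nu([\omega])^{\gamma}\le C_0M^k$ from Lemma \ref{exponent-bound}. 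Dividing by $H_{k,N}(\nu,\mu)>0$ (valid for large $N$) gives
\[
\frac{H_{k,N}(\mu,\mu)}{H_{k,N}(\nu,\mu)}\le\gamma+\frac{P_N\ln(C_0M^k/P_N)}{H_{k,N}(\nu,\mu)}.
\]
Letting $N\to\infty$, so $P_N\nearrow1$ and $H_{k,N}(\nu,\mu)\nearrow H_k(\nu,\mu)\in(0,+\infty]$, yields $\limsup_N\le\gamma+\frac{|\ln C_0|+k|\ln M|}{H_k(\nu,\mu)}$, the second term read as $0$ when $H_k(\nu,\mu)=+\infty$. Taking $\limsup_{k\to\infty}$ and using that $h(\nu|\mu)=+\infty$ means $\tfrac1kH_k(\nu,\mu)\to+\infty$ (Proposition \ref{convergence}), hence $k/H_k(\nu,\mu)\to0$, makes the correction vanish, so $\beta(\nu|\mu)\le\gamma=\alpha_\nu+\epsilon$; letting $\epsilon\downarrow0$ gives $\beta(\nu|\mu)\le\alpha_\nu$.

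The main obstacle I anticipate is the bookkeeping of the second case: one must permit $H_k(\nu,\mu)=+\infty$ for some $k$, keep the convexity bound uniform in $N$, and confirm the error term decays. The decisive mechanism is that $h(\nu|\mu)=+\infty$ forces superlinear growth of $H_k(\nu,\mu)$, so the $O(k)$ defect produced by Lemma \ref{exponent-bound} is negligible against the denominator; and the choice $\gamma=\alpha_\nu+\epsilon$ rather than $\alpha_\nu$ itself is exactly what makes $\sum_\omega\nu([\omega])^{\gamma}$ summable and Lemma \ref{exponent-bound} available.
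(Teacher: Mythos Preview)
Your proof is correct and follows essentially the same approach as the paper: the finite case is handled identically, and in the infinite case both you and the paper feed the bound of Lemma~\ref{exponent-bound} into a convexity inequality and let the $O(k)$ defect die against the superlinear growth of $H_k(\nu,\mu)$. The only cosmetic difference is that the paper quotes the inequality $\sum s_j(t_j-\ln s_j)\le\ln\sum e^{t_j}$ with an extra bucket for the complement $\mathbb{N}^k\setminus\Sigma_N^k$ (producing an $s_m\ln s_m$ term), whereas you normalise to probability weights $\mu([\omega])/P_N$ on $\Sigma_N^k$ and apply Jensen directly; the resulting error terms are equivalent.
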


\begin{proof}
Assume that $\nu\neq\mu$.  According to the analysis following \eqref{variationalprinple}, we have  $h(\nu|\mu)>0$.

  If $0<h(\nu|\mu)<+\infty$, it follows that  $h_\mu<+\infty$. By Proposition \ref{convergence},  we have $\beta(\nu|\mu)=\frac{h_\mu}{h(\nu|\mu)}$.

If $h(\nu|\mu)=+\infty$, then
\begin{equation}\label{infty}
\limsup_{k\to\infty}\limsup_{N\to\infty}\frac{1}{k}H_{k,N}(\nu,\mu)=+\infty.
\end{equation}
Assume that $\gamma>\alpha_\nu$. To prove the inequality \eqref{alpha betainequality} we shall use the following result (see \cite{walters}, p.\,217): let $t_1,\cdots, t_m$ be given real numbers. If $s_j>0$
and $\sum_{j=1}^ms_j=1$ then
\begin{equation}\label{convex-inequality}
\sum_{j=1}^ms_j(t_j-\ln s_j)\leqslant\ln (\sum_{j=1}^me^{t_j}).
\end{equation}
         For any fixed integers $N$ and $k$ which are large enough, there is a bijection $\pi:\{1,\cdots,N^k\}\to\mathcal{C}_N^k$. Applying (\ref{convex-inequality}) to
 $m=N^k+1,s_j=\mu(\pi(j)),$ for$~1\leqslant j\leqslant N^k$ and $s_m=\sum_{[\omega]\notin\mathcal{C}_N^k}\mu([\omega])$ and $t_j=\gamma\ln\nu(\pi(j)),$
 for $~1\leqslant j\leqslant N^k$ and $t_m=0$, we obtain

 \begin{eqnarray*}
 &&\gamma\sum_{j=1}^{N^k}\mu(\pi(j))\ln\nu(\pi(j))-\sum_{j=1}^{N^k}\mu(\pi(j))\ln\mu(\pi(j))\\
 &\leqslant&s_m\ln s_m+\ln(1+\sum_{\omega\in\Sigma_N^k}\nu([\omega])^\gamma).
   \end{eqnarray*}
   Therefore,
  $$\frac{H_{k,N}(\mu,\mu)}
  {H_{k,N}(\nu,\mu)}\leqslant\gamma+
  \frac{s_m\ln s_m}{H_{k,N}(\nu,\mu)}
  +\frac{\ln(1+\sum_{\omega\in\Sigma_N^k}\nu([\omega])^\gamma)}
  {H_{k,N}(\nu,\mu)}.$$
  By \eqref{uniform-bound-ineq} and (\ref{infty}), we finish the proof by letting $N \to\infty$
  and then $k \to\infty$.
\end{proof}

 As a direct corollary, it follows  that a Gibbs measure $\nu$ with $h_\nu=+\infty$ has convergence exponent  $\alpha_\nu=1$ because $\beta(\nu|\nu)=1$.

 \subsection{Approximation of invariant measure by  Markov measures}
  For any $\mu\in\mathcal{M}(X,T)$, we are going to construct a sequence of ergodic Markov measures $\{\mu_j\}_{j\geqslant1}$ which approximate  $\mu$ in  $w^*$-topology. Actually, the entropy $h_{\mu_j}$
  of $\mu_j$ also approaches the entropy $h_\mu$ of $\mu$ wherever $h_\mu<+\infty$.

Let $l\geqslant1$ be an integer. An $l$-Markov measure with state $S=\mathbb{N}$
is a measure $\upsilon\in\mathcal{M}(X)$ having the Markov property:
$$\forall n>l~\mathrm{and}~\forall \omega_1\cdots \omega_n\in\mathbb{N}^n,\quad \frac{\upsilon([\omega_1\cdots \omega_n])}{\upsilon([\omega_1\cdots \omega_{n-1}])}=\frac{\upsilon([\omega_{n-l}\cdots\omega_n])}
{\upsilon([\omega_{n-l}\cdots\omega_{n-1}])}.$$
   Given $\upsilon\in\mathcal{M}(X)$ and $l\geqslant2$, by a standard construction one can obtain an $(l-1)$-Markov measure $\upsilon_l$
  which coincides
  with $\upsilon$ on all $l$-cylinders (see \cite{FFW}).

  In particular, a $1$-Markov measure (we call  Markov measure for simplicity) can be obtained
  by a stochastic matrix ${\bf{P}}=(p_{ij})_{S\times S}$ and a probability vector ${\bf{p}}=(p_i)_{i\in S}$.
    For any $x_1\cdots x_n\in S^n$
  $$\upsilon([x_1\cdots x_n]):=p_{x_1}p_{x_1x_2}\cdots p_{x_{n-1}x_n}.$$
 The measure $\upsilon$ is $T$-invariant iff ${\bf{p}}$ is invariant with respect to $\bf{P}$ (i.e. ${\bf{p}}\bf{P}={\bf{p}}$). Assume that  $\bf{P}$ is a    primitive matrix. Then $\bf{P}$ is positive recurrent iff there is an invariant probability vector ${\bf{p}}$ on $S$ (see \cite{Sen}, p.177).

  Since the partition consisting of all $1$-cylinders is a generator, the entropy $h_\mu$ of any invariant measure
                      $\mu\in \mathcal{M}(X,T)$  can be expressed as (see \cite{walters})

  $$h_\mu=\lim_{n\to\infty}\frac{1}{j}\sum_{\omega\in \mathbb{N}^j}-\mu([\omega])\ln\mu([\omega]).$$
  Especially, if the entropy $h_\mu$ is finite we have (see \cite{FFW})
$$h_\mu=\lim_{j\to\infty}\sum_{\omega_1\cdots\omega_{j}\in\mathbb{N}^{j}}-\mu([\omega_1\cdots\omega_{j}])\ln\frac{\mu([\omega_1\cdots\omega_{j}])}
{\mu([\omega_1\cdots\omega_{j-1}])}.$$

        The following proposition states that there exists a sequence of ergodic Markov measures $\{\mu_j\}_{j\geqslant1}$ approximating $\mu$ in
       $w^*$-topology.

\begin{proposition} \label{markov-app}
For every $\mu\in\mathcal{M}(X,T),$ there exists a sequence of ergodic Markov measures $\{\mu_j\}_{j=1}^\infty$ such that
 $$w^*\textmd{-}\lim_{j\to\infty}\mu_j= \mu.$$ Furthermore, if $h_\mu<+\infty$, we have
$$\lim _{j\to\infty}h_{\mu_j}= h_\mu.$$
\end{proposition}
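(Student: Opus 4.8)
The plan is to produce the approximating sequence in two stages: first build higher-order Markov measures that match $\mu$ exactly on long cylinders (these automatically yield both the $w^*$-convergence and the entropy convergence), and then perform a small perturbation on each to force ergodicity without destroying either convergence. The entropy statement is only needed when $h_\mu<+\infty$, so the $w^*$-convergence is the sole requirement in the infinite-entropy case.

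For the first stage, fix $l\geqslant2$ and let $\mu^{(l)}$ be the $(l-1)$-Markov measure that coincides with $\mu$ on all $l$-cylinders, whose existence is the standard construction recalled above (see \cite{FFW}). Since $\mu^{(l)}$ and $\mu$ agree on every cylinder of length at most $l$, and since $|\mu^{(l)}([\omega])-\mu([\omega])|\leqslant1$, the definition of $d^*$ gives, with $A_m:=\sum_{[\omega]\in\mathcal{C}^m}a_{[\omega]}$,
$$
d^*(\mu^{(l)},\mu)=\sum_{m=l+1}^{\infty}\sum_{[\omega]\in\mathcal{C}^m}a_{[\omega]}\left|\mu^{(l)}([\omega])-\mu([\omega])\right|\leqslant\sum_{m=l+1}^{\infty}A_m,
$$
and the right-hand side tends to $0$ as $l\to\infty$ because $\sum_{m}A_m=1$. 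Hence $\mu^{(l)}$ converges in $w^*$-topology to $\mu$ by Proposition \ref{weak-d}. For the entropy, the $(l-1)$-Markov measure has entropy equal to the conditional quantity
$$
h_{\mu^{(l)}}=\sum_{\omega_1\cdots\omega_l\in\mathbb{N}^l}-\mu([\omega_1\cdots\omega_l])\ln\frac{\mu([\omega_1\cdots\omega_l])}{\mu([\omega_1\cdots\omega_{l-1}])},
$$
which, when $h_\mu<+\infty$, converges to $h_\mu$ by the limit formula recalled above (see \cite{FFW}).

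For the second stage I would recode each $\mu^{(l)}$ as a $1$-step Markov measure on the window alphabet $\mathbb{N}^{l-1}$, with stochastic matrix $\mathbf{P}^{(l)}$ and invariant probability vector $\mathbf{p}^{(l)}$. The measure $\mu^{(l)}$ need not be ergodic, precisely because $\mathbf{P}^{(l)}$ may be reducible (for instance when $\mu$ is carried by several distinct periodic orbits). To repair this I fix a reference irreducible stochastic matrix $\mathbf{Q}$ on the same state space whose rows have rapidly decaying tails and finite entropy, and set $\mathbf{P}^{(l)}_\epsilon=(1-\epsilon)\mathbf{P}^{(l)}+\epsilon\mathbf{Q}$. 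For every $\epsilon>0$ the matrix $\mathbf{P}^{(l)}_\epsilon$ is primitive, and a drift estimate based on the light tails of $\mathbf{Q}$ shows it is positive recurrent; the criterion for primitive matrices (\cite{Sen}, p.\,177) then furnishes an invariant probability vector $\mathbf{p}^{(l)}_\epsilon$, so that $\mathbf{P}^{(l)}_\epsilon$ and $\mathbf{p}^{(l)}_\epsilon$ define an ergodic Markov measure $\mu^{(l)}_\epsilon$. As $\epsilon\to0$ one has $\mathbf{P}^{(l)}_\epsilon\to\mathbf{P}^{(l)}$ and $\mathbf{p}^{(l)}_\epsilon\to\mathbf{p}^{(l)}$ entrywise, whence $d^*(\mu^{(l)}_\epsilon,\mu^{(l)})\to0$ via Proposition \ref{weak-d}; moreover the explicit Markov-entropy formula together with a dominated-convergence argument (enabled by the controlled tails of $\mathbf{Q}$ and the concavity bounds for row entropies) yields $h_{\mu^{(l)}_\epsilon}\to h_{\mu^{(l)}}$. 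Choosing $\epsilon_l$ so small that $d^*(\mu^{(l)}_{\epsilon_l},\mu^{(l)})<1/l$ and, when $h_\mu<+\infty$, $|h_{\mu^{(l)}_{\epsilon_l}}-h_{\mu^{(l)}}|<1/l$, the diagonal sequence $\mu_j:=\mu^{(j)}_{\epsilon_j}$ consists of ergodic Markov measures with $\mu_j\to\mu$ in $w^*$-topology and $h_{\mu_j}\to h_\mu$.

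The main obstacle is the second stage on the \emph{countable} state space: unlike the finite-alphabet case, primitivity of $\mathbf{P}^{(l)}_\epsilon$ no longer guarantees a stationary probability vector, so one must genuinely verify positive recurrence of the perturbed kernel and show that both its stationary distribution and its (infinite-series) entropy depend continuously on $\epsilon$. This is where the freedom in choosing $\mathbf{Q}$ — with a drift condition robust under small mixing and a summable contribution to the entropy sum — must be exploited. The $w^*$-convergence, by contrast, is soft and reduces to entrywise convergence of finitely many cylinder probabilities through Proposition \ref{weak-d}.
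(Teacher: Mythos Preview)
Your first stage---taking the $(l-1)$-Markov measure $\mu^{(l)}$ that agrees with $\mu$ on $l$-cylinders and reading off both the $w^*$-convergence (via $d^*$) and the entropy convergence (via the conditional-entropy formula)---is exactly what the paper does.

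The divergence is entirely in how ergodicity is obtained. The paper does \emph{not} perturb the transition matrix. Instead it perturbs $\mu$ at the outset: replace $\mu$ by $(1-\epsilon)\mu+\epsilon\mu_0$ with $\mu_0$ a fixed invariant measure of full support, and then run the canonical Markov construction on this perturbed measure. The point is that once the underlying measure has full support, the window-alphabet matrix $\mathbf{P}_j$ is automatically primitive, and---crucially---one never has to \emph{find} a stationary probability vector, because the marginals of the (perturbed) $\mu$ on $(j-1)$-blocks already furnish one; positive recurrence then comes for free from the Seneta criterion you cite. Ergodicity (indeed strong mixing) follows immediately.

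Your route, mixing $\mathbf{P}^{(l)}$ with an auxiliary kernel $\mathbf{Q}$, forfeits precisely this free invariant vector: $\mathbf{p}^{(l)}$ is no longer stationary for $\mathbf{P}^{(l)}_\epsilon$, so you must manufacture a new one, and on a countable state space that genuinely requires a recurrence/drift argument, followed by continuity of both the stationary law and the (infinite-sum) entropy as $\epsilon\to0$. You flag this honestly as the ``main obstacle'' but do not carry it out; the drift-condition sketch is plausible but not a proof. By contrast, the paper's device of perturbing $\mu$ rather than $\mathbf{P}$ sidesteps all of this: the stationary vector is handed to you, the entropy of the perturbed measure is controlled by affinity/concavity bounds at the level of measures (not matrices), and no continuity-of-stationary-distribution argument is needed. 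If you want to salvage your approach, the cleanest fix is simply to move the perturbation up to the measure level as the paper does.
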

\begin{proof}
First, we assume that $\mu$ is supported on the whole space $X$, otherwise we can place $\mu$  by $\mu_\epsilon=(1-\epsilon)\mu+\epsilon\mu_0$, where $0<\epsilon<1$ and $\mu_0$ is a fixed $T$-invariant Borel probability measure supported on whole space  $X$.
Fix $j\geqslant3$ (the cases $j=1$ and $j=2$  will be treated separately),
 we consider the state space $S_j=\mathbb{N}^{j-1}$ and the probability vector ${\bf{p}}_j=(\mu([\omega]))_{\omega\in S_j}$.  Take a  stochastic matrix ${\bf{P}}_j=(p_{\omega_1\cdots\omega_{j-1},\theta_1\cdots\theta_{j-1}})_{S_j\times S_j}$, where
\begin{eqnarray*}
p_{\omega_1\cdots\omega_{j-1},\theta_1\cdots\theta_{j-1}}=
\left\{\begin{array}{lc}
\frac{\mu([\omega_1\cdots\omega_{j-1}\theta_{j-1}])}{\mu([\omega_1\cdots\omega_{j-1}])},\quad~\textmd{if}~\omega_2\cdots\omega_{j-1}
=\theta_1\cdots\theta_{j-2},\\
0, ~\quad\quad\quad\quad\quad\quad\textmd {otherwise}.
\end{array}\right.\\
\end{eqnarray*}
Then,  ${\bf{P}}_j$ and ${\bf{p}}_j$ determine a unique Markov measure $\upsilon_j$ by virtue of the Markov property.
It is easy to check that ${\bf{p}}_j{\bf{P}}_j={\bf{p}}_j$. This implies that  $\upsilon_j$  is $T_j$-invariant, where $T_j$ is the shift
map on symbolic space $S_j^\mathbb{N}$. Furthermore, one can show that the matrix ${\bf{P}}_j$ is primitive  because ${\bf{P}}_j^n>0$ for  any $n>j$, so ${\bf{P}}_j$ is
positive recurrent.  In fact, by a standard argument, we  can even show that $\upsilon_j$ is  strong-mixing with respect to $T_j$ (see \cite{walters}, p. 42).    Let $\mu_j$ be the $(j-1)$-Markov measure which coincides with $\mu$ on all $j$-cylinders by a standard construction.
 One can show that $(S_j^\mathbb{N},T_j,\upsilon_j)$ is isomorphic to
$(X,T,\mu_j)$. Hence $\mu_j$ is ergodic with respect to $T$.
For the cases of $j=1$ and $j=2$, we consider state spaces $S_1=S_2=\mathbb{N}$ and ${\bf{p}}_1={\bf{p}}_2=(\mu([i]))_{i\in \mathbb{N}}$. Take ${\bf{P}}_1=(p_{ij})_{\mathbb{N}\times \mathbb{N}}$, where
$$p_{ij}=\mu([j]),~\forall ~i,j\geqslant1,$$
and ${\bf{P}}_2=(p_{ij})_{\mathbb{N}\times \mathbb{N}}$, where      $$p_{ij}=\frac{\mu([ij])}{\mu([i])},~\forall ~i,j\geqslant1.$$
By the same argument as in the case  $j\geqslant3$, one  obtain two ergodic measures  $\mu_1$  (usually called Bernoulli measure)  and $\mu_2$ on $X$.

Now we  prove that $\mu_j$ converge  in $w^*$-topology to $\mu$. By Proposition \ref{weak-d} it is sufficient to show $d^*(\mu,\mu_j)\to0~\textmd{as}~j\to\infty$.
 In fact,
\begin{eqnarray*}
d^*(\mu,\mu_j)&=&\sum_{[\omega]\in\mathcal{C}^*}{a_{[\omega]}|\mu([\omega])-\mu_j([u])|}\leqslant\sum_{|\omega|\geqslant j+1}a_{[\omega]}\to0~\textmd{as}~ j\rightarrow\infty.
\end{eqnarray*}

At last, assume that $h_\mu<+\infty$. Recall that $\mu$ coincides $\mu_j$  with all $j$-cylinders. Then, we have
\begin{eqnarray*}
h_\mu&=&\lim_{j\to\infty}\sum_{\omega_1\cdots\omega_{j}\in\mathbb{N}^{j}}-\mu([\omega_1\cdots\omega_{j}])\ln\frac{\mu([\omega_1\cdots\omega_{j}])}
{\mu([\omega_1\cdots\omega_{j-1}])}\\
&=&\lim_{j\to\infty}\sum_{\omega_1\cdots\omega_{j}\in\mathbb{N}^{j}}-\mu_j([\omega_1\cdots\omega_{j}])\ln\frac{\mu_j([\omega_1\cdots\omega_{j}])}
{\mu_j([\omega_1\cdots\omega_{j-1}])}\\
&=&\lim_{j\to\infty}h_{\mu_j}.\\
\end{eqnarray*}

\end{proof}
In the sequel, the sequence $\{\mu_j\}_{j\geqslant1}$   constructed in the proof of Proposition \ref{markov-app} will be called sequence of Markov approximation of $\mu$. Now we  present the approximation property of the relative entropy $h(\nu|\mu)$ by $h(\nu|\mu_j)$.

\begin{proposition}\label{int-app}
 Let $\mu\in\mathcal{M}(X,T)$ and  $\{\mu_j\}_{j\geqslant1}$ be the sequence  Markov approximation of $\mu$.
 Assume that the potential $\varphi$ with summable variations admits a unique Gibbs measure $\nu$.
\begin{enumerate}
\item[(i)] If $h(\nu|\mu)<+\infty$, then $h(\nu|\mu_j)<+\infty$  for all $j\geqslant1$ and
$$\lim_{j\to\infty}h(\nu|\mu_j)= h(\nu|\mu).$$
\item[(ii)] If $h(\nu|\mu)=+\infty$, then$$h(\nu|\mu_j)=
+\infty,~\forall j\geqslant1.$$

\end{enumerate}
\end{proposition}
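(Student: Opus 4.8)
The plan is to reduce everything to the integral $\int_X\varphi\,d\lambda$ by means of Proposition~\ref{convergence}, which gives $h(\nu|\lambda)=-\int_X\varphi\,d\lambda$ for every invariant $\lambda$; thus $h(\nu|\mu)<+\infty$ (resp.\ $=+\infty$) is equivalent to $\int_X\varphi\,d\mu$ being finite (resp.\ equal to $-\infty$), and similarly for each $\mu_j$. Keeping the representatives $\{x'_\omega\}$ fixed as in the proof of Proposition~\ref{convergence}, I would work with $I_k(\lambda)=\sum_{\omega\in\mathbb{N}^k}\lambda([\omega])\varphi(x'_\omega)$, which by \eqref{p-bound} is a well-defined element of $[-\infty,\ln C]$ and by \eqref{Ceso} satisfies $\int_X\varphi\,d\lambda=\lim_{k\to\infty}I_k(\lambda)$. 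The one structural input I would use is that the Markov approximation $\mu_j$ coincides with $\mu$ on all $j$-cylinders, so that $I_k(\mu_j)=I_k(\mu)$ for every $k\le j$, the two sums being literally identical term by term.

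Next I would record the elementary control coming from summable variations. Using $\lambda([\omega])=\sum_{a\in\mathbb{N}}\lambda([\omega a])$ and $|\varphi(x'_{\omega a})-\varphi(x'_\omega)|\le{\rm var}_k\varphi$ for $\omega\in\mathbb{N}^k$, the difference $I_{k+1}(\lambda)-I_k(\lambda)=\sum_{\omega,a}\lambda([\omega a])\bigl(\varphi(x'_{\omega a})-\varphi(x'_\omega)\bigr)$ is an absolutely convergent series bounded by ${\rm var}_k\varphi$. Two consequences follow: first, the dichotomy that $I_k(\lambda)=-\infty$ for one $k$ iff it holds for all $k$ (finiteness propagates both up and down the index, and $-\infty$ likewise, via the one-sided bounds $\varphi(x'_{\omega a})\le\varphi(x'_\omega)+{\rm var}_k\varphi$ and its reverse); second, whenever $I_{k_0}(\lambda)$ is finite one has $|\int_X\varphi\,d\lambda-I_{k_0}(\lambda)|\le\sum_{m\ge k_0}{\rm var}_m\varphi$, a tail that is uniform in $\lambda$.

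With these in hand the proposition falls out. For (ii), $\int_X\varphi\,d\mu=-\infty$ forces $I_k(\mu)=-\infty$ for all $k$ by the dichotomy, in particular $I_j(\mu_j)=I_j(\mu)=-\infty$, whence $\int_X\varphi\,d\mu_j=-\infty$ and $h(\nu|\mu_j)=+\infty$ for every $j$. For (i), finiteness of $\int_X\varphi\,d\mu$ gives $I_j(\mu_j)=I_j(\mu)$ finite, so $\int_X\varphi\,d\mu_j$ is finite and $h(\nu|\mu_j)<+\infty$. For the convergence, I would fix $\epsilon>0$, choose $k_0\ge 2$ with $\sum_{m\ge k_0}{\rm var}_m\varphi<\epsilon$, and apply the uniform tail bound to $\mu$ and to $\mu_j$ (for $j\ge k_0$); since $I_{k_0}(\mu_j)=I_{k_0}(\mu)$, this yields $|\int_X\varphi\,d\mu_j-\int_X\varphi\,d\mu|\le 2\epsilon$, hence $\lim_{j\to\infty}h(\nu|\mu_j)=h(\nu|\mu)$.

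The only genuinely delicate point is the bookkeeping with the value $-\infty$: the term-by-term manipulations of $I_k$ must be justified as absolutely convergent series of differences rather than as differences of possibly infinite sums, which is exactly what the bound by ${\rm var}_k\varphi$ provides. Everything else is a direct consequence of the coincidence of $\mu_j$ and $\mu$ on $j$-cylinders together with the uniformity of the variation tail. If $\mu$ is not fully supported, one applies the argument to the measure actually approximated in Proposition~\ref{markov-app}, the coincidence on $j$-cylinders being the only property used.
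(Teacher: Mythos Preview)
Your proposal is correct and follows essentially the same route as the paper: both reduce to $h(\nu|\lambda)=-\int_X\varphi\,d\lambda$ via Proposition~\ref{convergence}, exploit the coincidence $I_j(\mu_j)=I_j(\mu)$, and control $I_{k}(\lambda)-I_{k'}(\lambda)$ by the tail $\sum_{i\ge k'}{\rm var}_i\varphi$. Your packaging of the telescoping estimate as a uniform tail bound $\bigl|\int_X\varphi\,d\lambda-I_{k_0}(\lambda)\bigr|\le\sum_{m\ge k_0}{\rm var}_m\varphi$ and the explicit finite/$-\infty$ dichotomy for $I_k(\lambda)$ make the argument slightly cleaner than the paper's, but the substance is identical.
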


\begin{proof} Recall that for any $\lambda\in\mathcal{M}(X)$, let
  $$I_k(\lambda)= \sum_{\omega\in\mathbb{N}^k}\lambda([\omega])\varphi(x'_{\omega}),$$ where $x'_\omega$ is an arbitrarily chosen point in $[\omega]$.
By Proposition \ref{convergence}, for any $\mu\in\mathcal{M}(X,T)$ we have
 $$h(\nu|\mu)=-\int_X{\varphi} \, \mathrm{d}\mu=-\lim_{k\to\infty}I_k(\mu).$$
Suppose $h(\nu|\mu)<+\infty$.  For any $\epsilon>0$ there exists an integer  $N_1\geqslant1$ such that                                                                                                                                                                                                          $$\forall k >N_1,~\left|h(\nu|\mu)+I_k(\mu)\right|\leqslant\frac{\epsilon}{2}.$$
On the other hand, by the definition of variation we have the following estimate
\begin{eqnarray*}
I_k(\mu_j)&=&\sum\limits_{[\omega]\in\mathcal{C}^{k-1}}\sum\limits_{\omega_{k}=1}^\infty\mu_j([\omega\omega_{k}])\varphi(x'_{\omega\omega_{k}})\\
&\leqslant&\sum\limits_{[\omega]\in\mathcal{C}^{k-1}}\sum\limits_{\omega_{k}=1}^\infty\mu_j([\omega\omega_{k}])(\varphi(x'_{\omega})+\rm{var}_{k-1}\varphi)\\
&=&I_{k-1}(\mu_j)+{\rm{var}}_{k-1}\varphi.
\end{eqnarray*}
A obvious induction on $k$ gives
\begin{equation}\label{left integral}
\forall k>j,~I_k(\mu_j)\leqslant I_j(\mu_j)+\sum_{i=j}^{k-1}{\rm{var}}_i\varphi.
\end{equation}
In the same way, we can also get the opposite inequality
$$\forall k>j,~I_k(\mu_j)\geqslant I_j(\mu_j)-\sum_{i=j}^{k-1}{\rm{var}}_i\varphi.$$
Thus, noting a fact that $I_j(\mu_j)=I_j(\mu)$, we have
            $$\forall k>j,~ |I_k(\mu_j)-I_j(\mu)|\leqslant\sum_{i=j}^{k-1} {\rm{var}}_i\varphi.$$
Since $\varphi$ is of summable variations, there exists a positive integer   $N_2$ depending on $\epsilon$ such that
$$\sum\limits_{n>N_2} {\rm{var}}_n\varphi\leqslant\frac{\epsilon}{2}.$$
Let $N=\max\{N_1,N_2\}.$ For any $j\geqslant N$  and $k\geqslant j+1$, we have
\begin{eqnarray*}
\left|I_k(\mu_j)+h(\nu|\mu)\right|
&\leqslant&|I_k(\mu_j)-I_j(\mu)|+\left|I_j(\mu)+h(\nu|\mu)\right|\\
&\leqslant&\sum\limits_{i=j}^k {\rm{var}}_i\varphi+\frac{\epsilon}{2}\leqslant\epsilon.
\end{eqnarray*}
Letting $k\to+\infty$, we get
$$|h(\nu|\mu_j)-h(\nu|\mu)|\leqslant\epsilon.$$
Then we have proved
$\lim_{j\rightarrow\infty}h(\nu|\mu_j)=h(\nu|\mu)$.

Suppose $h(\nu|\mu)=+\infty$. Fix an integer $k$ and by a similar argument as above we have
 $$\forall n>k,~I_k(\mu)\leqslant I_n(\mu)+\sum_{i=k}^{n-1}{\rm{var}}_i\varphi.$$
 When $n$ tends to $\infty$,  $I_n(\mu)$ tends to $-h(\nu|\mu)$. Then  $I_k(\mu)=-\infty.$

 For any $j\geqslant1$,  by $\eqref{left integral}$ we have
\begin{eqnarray*}
h(\nu|\mu_j)
=\lim_{k\rightarrow\infty}-I_k(\mu_j)
\geqslant -I_j(\mu)-\sum_{i=j}^{+\infty} {\rm{var}}_i\varphi=+\infty.
\end{eqnarray*}
 Thus we complete the proof.
\end{proof}

 At the end of this section, we derive further useful properties of $\{\mu_j\}_{j\geqslant1}$ which will play an important role in the proof for lower bound of the Hausdorff dimension of $G_\mu$.
 One of these properties is that most of orbit measures approach $\mu_j$, as consequence of the ergodicity of
 $\mu_j$.

\begin{proposition} \label{app}
 Let $\mu\in\mathcal{M}(X,T)$ and $\{\mu_j\}_{j\geqslant1}$ be the sequence of  Markov approximation of $\mu$. Then  there exist a sequence of Borel subsets
 $\{M_j\}_{j\geqslant1}$ and a sequence of increasing integers $\{m_j\}_{j\geqslant1}$  such that
$$\forall j\geqslant 1,~\mu_j(M_j)\geqslant1-\frac{1}{4^j},$$
 $$\forall x\in M_j,n\geqslant m_j,~ d^*(\mu_j,\Delta_{x,n})\leqslant\frac{1}{4^j},$$

$$\forall n\geqslant m_j,~ \sup_{x|^n_1=y|^n_1}d^*(\Delta_{x,n},\Delta_{y,n})\leqslant\frac{1}{j}.$$
Furthermore, if $\varphi$ has summable variations and admits a unique Gibbs measure satisfying  $h(\nu|\mu)<+\infty$, then we can even find a sequence of increasing integers
$\{b_j\}_{j\geqslant1}$    such that  for any $j\geqslant1$

  $$\forall x\in M_j,1\leqslant n\leqslant m_j,~\left|S_n\varphi(x)\right|\leqslant b_j, $$
$$\forall x\in M_j, n\geqslant m_j,~\left|\frac{1}{n}S_n\varphi(x)+h(\nu|\mu_j)\right|\leqslant\frac{1}{4^j}, $$
$$\forall x\in M_j,n\geqslant m_j,~\left|\frac{1}{n}\ln\mu_j([x|^n_1])+h_{\mu_j}\right|\leqslant\frac{1}{4^j}.$$

\end{proposition}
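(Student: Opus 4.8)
The plan is to exploit the fact, established in Proposition \ref{markov-app}, that each $\mu_j$ is \emph{ergodic}: the first three assertions then follow from the pointwise ergodic theorem together with Egorov's theorem, the universal equicontinuity statement is essentially Proposition \ref{measure uniform}, and the last three assertions (valid under $h(\nu|\mu)<+\infty$) come from the ergodic theorem applied to $\varphi$, from the Shannon--McMillan--Breiman theorem, and from an $L^1$ argument that tames the initial segments of $S_n\varphi$. The rough scheme is to produce for each $j$ a set $E_j$ handling the asymptotic statements, fix the thresholds $m_j$, then shrink to a set $F_j$ handling the initial-segment bound, and finally set $M_j=E_j\cap F_j$.

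First I would establish the three asymptotic statements for $\mu_j$-a.e.\ point. Applying Birkhoff's ergodic theorem to each indicator $1_{[\omega]}$ gives $\Delta_{x,n}([\omega])=\frac1n S_n1_{[\omega]}(x)\to\mu_j([\omega])$ for $\mu_j$-a.e.\ $x$, simultaneously over the countably many cylinders; since $d^*(\Delta_{x,n},\mu_j)=\sum_{[\omega]}a_{[\omega]}\,|\Delta_{x,n}([\omega])-\mu_j([\omega])|$ is dominated termwise by the summable sequence $(a_{[\omega]})$, dominated convergence yields $d^*(\Delta_{x,n},\mu_j)\to0$ a.e. Applying Birkhoff to $\varphi$, which lies in $L^1(\mu_j)$ because $\varphi\le\ln C$ by \eqref{p-bound} and $\int_X\varphi\,\mathrm d\mu_j=-h(\nu|\mu_j)$ is finite (Propositions \ref{convergence} and \ref{int-app}(i)), gives $\frac1n S_n\varphi(x)\to-h(\nu|\mu_j)$ a.e. Finally the Shannon--McMillan--Breiman theorem for $\mu_j$ relative to the generating partition into $1$-cylinders gives $-\frac1n\ln\mu_j([x|_1^n])\to h_{\mu_j}$ a.e.; here $h_{\mu_j}<+\infty$, since $h_\mu\le h(\nu|\mu)<+\infty$ by \eqref{relativeentropy} forces the $1$-cylinder entropy $-\sum_{i\in\mathbb N}\mu([i])\ln\mu([i])$ to be finite, and $\mu_j$ agrees with $\mu$ there. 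Egorov's theorem applied to these three a.e.\ convergent sequences of measurable functions, followed by intersection, yields a set $E_j$ with $\mu_j(E_j)\ge1-\tfrac12\,4^{-j}$ and a threshold $m_j'$ beyond which all three deviations are $\le4^{-j}$ uniformly on $E_j$.

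Next I would fix $m_j$. Proposition \ref{measure uniform} supplies an integer $m_j^{(3)}$ with $\sup_{x|_1^n=y|_1^n}d^*(\Delta_{x,n},\Delta_{y,n})\le1/j$ for $n\ge m_j^{(3)}$; setting $m_j=\max\{m_j',\,m_j^{(3)},\,m_{j-1}+1\}$ makes $\{m_j\}$ increasing and secures the assertions concerning $d^*(\mu_j,\Delta_{x,n})$, the equicontinuity bound $1/j$, and the limits of $\frac1n S_n\varphi$ and $\frac1n\ln\mu_j([x|_1^n])$ on $E_j$. It remains to arrange the initial-segment bound $|S_n\varphi(x)|\le b_j$ for $1\le n\le m_j$. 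Since each $S_n\varphi\in L^1(\mu_j)$, we have $\mu_j(\{|S_n\varphi|>B\})\to0$ as $B\to\infty$; taking the union over the finitely many $n\in\{1,\dots,m_j\}$, I may choose $b_j$ (increasing, by replacing it with $\max\{b_j,b_{j-1}+1\}$) and a set $F_j$ with $\mu_j(F_j)\ge1-\tfrac12\,4^{-j}$ on which $|S_n\varphi|\le b_j$ for all such $n$. Then $M_j:=E_j\cap F_j$ satisfies $\mu_j(M_j)\ge1-4^{-j}$ and all six required properties.

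The main obstacle is precisely this last bound together with the ordering of the quantifiers: the thresholds $m_j$ must be chosen large (for the asymptotic statements) \emph{before} $b_j$ can be chosen to dominate the finitely many partial sums $S_1\varphi,\dots,S_{m_j}\varphi$, and because $\varphi$ is only bounded above and may be unbounded below, this control is genuinely a small-measure/$L^1$ argument rather than a sup-norm estimate. Everything else is a routine combination of Birkhoff's ergodic theorem, the Shannon--McMillan--Breiman theorem, and Egorov's theorem, with the finiteness of $h(\nu|\mu_j)$ and $h_{\mu_j}$ guaranteed by Propositions \ref{int-app} and \ref{markov-app} and by \eqref{relativeentropy}.
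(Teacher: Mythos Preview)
Your proposal is correct and follows essentially the same route as the paper: ergodicity of $\mu_j$ plus Birkhoff for the orbit-measure convergence, Birkhoff for $\varphi$ (using $\varphi\in L^1(\mu_j)$ via Propositions \ref{convergence} and \ref{int-app}), Shannon--McMillan--Breiman, Egorov to pass to sets of large measure, Proposition \ref{measure uniform} for the equicontinuity bound, and finally an $L^1$/measure argument to control the finitely many initial partial sums $S_1\varphi,\dots,S_{m_j}\varphi$ once $m_j$ is fixed. Your observation about the ordering of quantifiers ($m_j$ before $b_j$) is exactly the point; the paper handles the last item by intersecting countably many sets $Q_j^{(n)}$ with summable defects and then summing the bounds $b_j^{(n)}$ over $n\le m_j$, which is a minor bookkeeping variant of your finite-union argument. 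One small remark: your justification that $h_{\mu_j}<\infty$ via ``$h_\mu<\infty$ forces the $1$-cylinder entropy to be finite'' is slightly imprecise as stated; the clean route (also the paper's) is $h_{\mu_j}\le h(\nu|\mu_j)<\infty$ directly from \eqref{relativeentropy} and Proposition \ref{int-app}(i).
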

\begin{proof}

This is a rather direct consequence of the Egoroff theorem, the convergence results proved above, the Birkhoff ergodic theorem and Shannon-McMillan-Breiman theorem.

  First, by the ergodicity of $\mu_j$, for every cylinder $[\omega]$  we have
                $$\Delta_{n,x}([\omega])=\frac{1}{n}\sum_{i=0}^{n-1}\delta_{T^ix}([\omega])\to\mu_j([\omega]), ~\mu_j -a.e..$$
Hence by  Lemma \ref{cr}, it follows that
$$\lim_{n\rightarrow\infty}{d^*(\mu_j,\Delta_{x,n})}=0,~\mu_j- a.e..$$
By the Egoroff theorem, the above convergence is uniform on some Borel subset $M^{(1)}_j\subset X$ with $\mu_j(M^{(1)}_j)\geqslant1-\frac{1}{4^{j+1}}.$
 Therefore there exists  an increasing sequence of positive integers $\{m^{(1)}_j\}_{j\geqslant1}$ such that
$$d^*(\mu_j,\Delta_{x,n})\leqslant\frac{1}{4^{j}},\forall x\in M^{(1)}_j,n\geqslant m^{(1)}_j.$$

Second, since $h(\nu|\mu)<+\infty$, by  (\ref{relativeentropy}), Proposition \ref{markov-app} and \ref{int-app}
we have $$h_{\mu}<+\infty,~ h_{\mu_j}<+\infty~\textmd{and}~h(\nu|\mu_j)<+\infty,~\forall j\geqslant1.$$
 In this way, by  the Shannon-McMillan-Breiman theorem there exist a Borel subset $M^{(2)}_j\subset X$ and $m^{(2)}_j\in\mathbb{N}$ so that
$$\mu_j(M^{(2)}_j)\geqslant1-\frac{1}{4^{j+1}}$$
and $$\left|\frac{1}{n}\ln\mu_j([x|^n_1])+h_{\mu_j}\right|\leqslant\frac{1}{4^{j}},~\forall x\in M^{(2)}_j,n\geqslant m^{(2)}_j.$$

 Third, by the Birkhoff ergodic theorem there exist a Borel subset $M^{(3)}_j\subset X$ and $m^{(3)}_j\in\mathbb{N}$ such that
 $$\mu_j(M^{(3)}_j)\geqslant1-\frac{1}{4^{j+1}}$$
and $$\left|\frac{1}{n}S_n\varphi(x)+h(\nu|\mu_j)\right|\leqslant\frac{1}{4^j}, ~\forall x\in M^{(3)}_j, n\geqslant m^{(3)}_j.$$

By Proposition \ref{measure uniform}, there exists   positive integer $m_j^{(4)}\in\mathbb{N}$ such that for any $n\geqslant m_j^{(4)}$ and $x,y\in X$ satisfying $x|^n_1=y|^n_1$,
$$d^*(\Delta_{x,n},\Delta_{y,n})\leqslant\frac{1}{j}.$$
Take $m_j=\max_{1\leqslant i\leqslant4}\{m_j^{(i)}\}$. By the invariance of $\mu_j$, we have for any $n\geqslant1$, $$\left|\int_{X} S_n\varphi(x) \mathrm{d}\mu_j\right|<+\infty.$$
Thus there exist Borel
 subset $Q^{(n)}_j\subset X$ and  positive integer $b^{(n)}_j$ so that
 $$\mu_j(Q^{(n)}_j)\geqslant1-\frac{1}{2^n}\frac{1}{4^{j+1}},~n\geqslant1$$
  and$$\left|S_n\varphi(x)\right|\leqslant b^{(n)}_j,~\forall x\in Q^{(n)}_j,~n\geqslant1.$$
  Let $M^{(4)}_j:=\bigcap_{n=1}^\infty Q^{(n)}_j$ and $b_j:=\sum_{n=1}^{m_j}b^{(n)}_j.$ Thus we have

  $$\mu_j(M^{(4)}_j)\geqslant1-\frac{1}{4^{j+1}},$$and $$\left|S_n\varphi(x)\right|\leqslant b_j, ~\forall x\in M_j^{(4)},1\leqslant n\leqslant m_j.$$

        Finally, take $M_j:=\bigcap_{i=1}^4M^{(i)}_j$ and we complete the proof.
\end{proof}


\section{The construction of a seed of $G_\mu$ }

 Let $\{a_n\}_{n\geqslant1}$ be  a sequence of positive integers tending to the infinity. In this section, we  construct a $\mu$-generic  point $z=(z_n)_{n\geqslant1}$ of $G_\mu$  such that $z_n\leqslant a_n$. By using such a point, called seed of $G_\mu$, we will construct a Cantor subset of $G_\mu$ in next section.  The techniques we use are inspired by \cite{Mawen}.

\begin{proposition}\label{seed}
For any  sequence of positive integers $\{a_n\}_{n\geqslant1}$ tending to the infinity, there exists a point $z=(z_n)_{n\geqslant1}\in G_\mu$ such that $z_n\leqslant a_n$ for all $n\geqslant 1$.

\end{proposition}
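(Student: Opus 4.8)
The plan is to build $z$ by concatenating finite \emph{blocks} $z=w^{(1)}w^{(2)}w^{(3)}\cdots$, where the $j$-th block is essentially a long $\mu_j$-typical word truncated so that its symbols stay below the bound imposed by $\{a_n\}$. Here $\{\mu_j\}_{j\ge1}$ is the sequence of Markov approximations of $\mu$ furnished by Proposition \ref{markov-app}, and $M_j,m_j$ are the data of Proposition \ref{app}: $\mu_j(M_j)\ge1-4^{-j}$, and for every $x\in M_j$ and $n\ge m_j$ one has $d^*(\mu_j,\Delta_{x,n})\le4^{-j}$ together with the uniform closeness $\sup_{x|_1^n=y|_1^n}d^*(\Delta_{x,n},\Delta_{y,n})\le1/j$. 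I stress that $\mu_j,M_j,m_j$ depend only on $\mu$ and may be fixed in advance, before any block length is chosen. The construction is inductive: having produced $w^{(1)},\dots,w^{(j-1)}$ ending at position $N_{j-1}$, the block $w^{(j)}$ will occupy positions $N_{j-1}+1,\dots,N_j$, where every symbol must be at most $a_n$. Since $a_n\to\infty$, the threshold $A_j:=\min_{n>N_{j-1}}a_n$ is a positive integer with $A_j\to\infty$ as $j\to\infty$; hence it suffices to ensure that every symbol of $w^{(j)}$ is $\le A_j$.

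To build $w^{(j)}$, I set $B_j:=A_j$ and choose a point $x^{(j)}\in M_j$ which is moreover Birkhoff-generic for the observable $\mathbf 1_{\{x_1>B_j\}}$. Taking $\ell_j\ge m_j$ large, the prefix $v^{(j)}:=x^{(j)}|_1^{\ell_j}$ satisfies $d^*(\mu_j,\Delta_{x^{(j)},\ell_j})\le4^{-j}$ and, by the Birkhoff theorem, contains a proportion at most $\sum_{i>B_j}\mu_j([i])+4^{-j}$ of symbols exceeding $B_j$. I then let $w^{(j)}$ be $v^{(j)}$ with each such large symbol replaced by the symbol $1$; all symbols of $w^{(j)}$ are then $\le B_j=A_j$, so $z_n\le a_n$ holds throughout block $j$. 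The decisive point is that $\mu_j$ coincides with $\mu$ on all $1$-cylinders (Proposition \ref{markov-app}), so $\sum_{i>B_j}\mu_j([i])=\sum_{i>B_j}\mu([i])\to0$ as $B_j=A_j\to\infty$. Since only this vanishing proportion of positions is altered, a position-counting estimate (of the type already used in Proposition \ref{measure uniform}, and legitimate once the weights $a_{[\omega]}$ are chosen with $\sum_\ell\ell\sum_{|\omega|=\ell}a_{[\omega]}<\infty$) shows that the truncation perturbs the orbit measure in the $d^*$-metric by an amount tending to $0$ with $j$. Combining this bound with $d^*(\mu_j,\Delta_{x^{(j)},\ell_j})\le4^{-j}$ and $d^*(\mu_j,\mu)\to0$ (Proposition \ref{weak-d}) yields $d^*(\mu,\Delta_{w^{(j)},\ell_j})\to0$.

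It remains to verify $z\in G_\mu$, that is, $\Delta_{z,n}\xrightarrow{d^*}\mu$ for \emph{all} $n\to\infty$. Writing $N_{j-1}\le n<N_j$, I express $\Delta_{z,n}$, via the sub-linearity of $d^*$ (Proposition \ref{aff}), as a convex combination of the orbit measures of the complete blocks $w^{(1)},\dots,w^{(j-1)}$ and of the partial block $z|_{N_{j-1}+1}^{n}$, up to boundary terms from the shifts that straddle block junctions. The boundary terms are controlled by the uniform estimate $\sup_{x|_1^n=y|_1^n}d^*(\Delta_{x,n},\Delta_{y,n})\le1/j$ of Proposition \ref{app} and by the fast decay of $a_{[\omega]}$, and are forced to $0$ by choosing the block lengths $\ell_j$ to grow rapidly. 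Each complete block measure lies within a $d^*$-distance of $\mu$ that tends to $0$ with the block index; the finitely many early blocks whose measures are not yet close to $\mu$ contribute an aggregate weight at most $N_{i_0}/N_{j-1}$, negligible for $j$ large. The partial block is $\mu_j$-close, hence $\mu$-close, whenever its length $n-N_{j-1}\ge m_j$, by Proposition \ref{app}, which guarantees closeness for every $n\ge m_j$ rather than only at block endpoints; and when $n-N_{j-1}<m_j$ this segment has weight at most $m_j/N_{j-1}$, which the choice $\ell_{j-1}\ge4^{\,j}\max\{m_j,N_{j-2}\}$ (available since all $m_j$ are known in advance) renders negligible. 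Assembling these estimates gives $d^*(\Delta_{z,n},\mu)\to0$ uniformly across $N_{j-1}\le n<N_j$.

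The main obstacle is precisely this last, uniform-in-$n$ bookkeeping: one must simultaneously keep the truncation perturbation small (handled by the identity of the one-dimensional marginals of $\mu_j$ and $\mu$), the inter-block boundary distortion small (handled by Propositions \ref{measure uniform} and \ref{app} together with fast-growing block lengths), and the weight of all the ``imperfect'' positions --- the early blocks and the sub-$m_j$ initial segment of the current block --- negligible. The symbol constraint $z_n\le a_n$ itself, which at first looks like the crux, dissolves once one notes that $A_j=\min_{n>N_{j-1}}a_n\to\infty$ and that truncating at level $A_j$ costs only the tail mass $\sum_{i>A_j}\mu([i])$.
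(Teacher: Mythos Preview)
Your approach is correct but takes a genuinely different route from the paper's. Both constructions concatenate blocks $W_1W_2\cdots$ drawn from the Markov approximations $\mu_j$, and both must reconcile the possibly large symbol values of $\mu_j$-typical words with the bound $z_n\le a_n$; the difference lies in how this reconciliation is achieved.

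The paper uses a \emph{repetition-and-delay} trick: it first selects a fixed word $\widetilde{x}_{j+1}\in H_{j+1}$, reads off its maximal symbol $s_{j+1}$, and only then chooses the repetition count $t_j$ for the \emph{previous} block $W_j=\widetilde{x}_j^{\,t_j}$ large enough that $N_j\ge s_{j+1}$. Thus when the copies of $\widetilde{x}_{j+1}$ begin at position $N_j+1$, every coordinate automatically satisfies $z_n\le s_{j+1}\le N_j<n$. No symbol is ever modified, and no perturbation estimate is needed. Your approach instead \emph{truncates}: you take a $\mu_j$-typical prefix and replace every symbol exceeding $A_j=\min_{n>N_{j-1}}a_n$ by the symbol~$1$, then argue that since the one-dimensional marginals of $\mu_j$ and $\mu$ agree, the proportion of altered positions is at most $\sum_{i>A_j}\mu([i])+4^{-j}\to0$, whence the orbit measure is perturbed in $d^*$ by $O\bigl(\epsilon_j\sum_k kA_k\bigr)$.

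Your argument works, with two caveats worth making explicit. First, the perturbation bound requires the weights defining $d^*$ to satisfy $\sum_k kA_k<\infty$; this is harmless because the choice of weights in Proposition~\ref{weak-d} is free, but it is an assumption the paper's proof does not need. Second, your handling of the partial block and the inter-block boundary terms, while correct in outline, is sketchier than the paper's explicit four-term decomposition of $\Delta_{z,n}$; in particular the assertion that ``complete block measures'' are close to $\mu$ really concerns $\Delta_{T^{N_{j-1}}z,\,\ell_j}$ rather than the orbit measure of $w^{(j)}$ alone, and the passage between the two uses the uniform estimate of Proposition~\ref{app} in the same way the paper does. The paper's trick buys a shorter proof with no perturbation analysis; your truncation idea is perhaps the more natural first thought and avoids reliance on block repetition.
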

\begin{proof}
For simplicity, we first construct a $\mu$-generic point $z=(z_n)_{n\geqslant1}$ satisfying $z_n\leqslant n$ for $n\geqslant1.$

For $j\geqslant1$, let $\mu_j,m_j$ and $M_j$ be the same as in Proposition   \ref{app}.
 Given a sequence of finite words
$W_1,W_2,W_3,\cdots$,
juxtaposing the elements of the sequence, we get an infinite word$$z=W_1W_2W_3\cdots.$$
In what follows, we will define $\{W_j\}$ by induction on $j$.  By the way, a sequence of integers $\{n_j\}_{j\geqslant1}$ will also be defined by induction. When $n_j$ is defined, let
$$H_j=\{z_1\cdots z_{n_j}\in \mathbb{N}^{n_j}:z=(z_1\cdots z_{n_j}\cdots)\in M_j\}.$$

Take $n_1=m_2,~ n_2=m_3,~\widetilde{x}_1= 1^{n_1}$ and ~$\widetilde{x}_2\in H_2$. Denote the maximum of the digits in $\widetilde{x}_2$ by $s_2$. Then take a sufficiently large $t_1\in\mathbb{N}$ such that $N_1:=t_1 n_1\geqslant s_2$ and then define $$
W_1={\widetilde{x}_1}^{t_1}.
$$

Suppose $W_1,\cdots,W_j, N_1\cdots N_j$ are defined and $\widetilde{x}_1,\cdots,\widetilde{x}_{j+1},n_1\cdots,n_{j+1}$ are also defined. We  define $W_{j+1},N_{j+1}$ and $\widetilde{x}_{j+2},n_{j+2}$ in the following way: take $n_{j+2}\geqslant m_{j+3}^2$ and $\widetilde{x}_{j+2}\in H_{j+2}$. Denote the maximal number of the digits in $\widetilde{x}_{j+2}$ by $s_{j+2}$. Then take a $t_{j+1}\in\mathbb{N}$ such that $$N_{j+1}:=t_{j+1} n_{j+1}+N_j\geqslant\max\{ N^2_j,s_{j+2}\}.$$ Then define $$
W_{j+1}={\widetilde{x}_{j+1}^{t_{j+1}}}.$$

Let $z=(z_n)_{n\geqslant1}$ be the sequence $W_1W_2\cdots$. We show that $z_n\leqslant n$ for all $n\geqslant1$. In fact, for $N_j<n\leqslant N_{j+1}$, by the definition of $W_j$ we have
 $$z_n\leqslant s_{j+1}\leqslant N_j< n.$$

Now, we  show $z\in G_\mu$. Since the point $z$ is obtained by juxtaposing the prefixes of points with orbit measures approximating the sequence of  Markov approximation of $\mu$, it is natural to approximate the orbit measure $\Delta_{z,n}$ by this sequence of  Markov measures.   Suppose $n\in[N_j,N_{j+1})$ for some $j$. Recall that
$$N_j=t_1n_1+t_2n_2+\cdots+t_jn_j.$$ Then there exists a unique integer $t ~(0\leqslant t<t_{j+1})$ such that $$N_j+tn_{j+1}\leqslant n<N_j+(t+1)n_{j+1}.$$
Thus by the definition of $\Delta_{z,n}$ we divide it into four parts

\begin{eqnarray*}
\Delta_{z,n}&=&\frac{1}{n}(\sum_{i=0}^{N_{j-1}-1}+\sum_{i=N_{j-1}}^{N_{j}-1}+\sum_{i=N_{j}}^{N_j+tn_{j+1}-1}
+\sum_{i=N_j+tn_{j+1}}^{n-1})(\delta_{T^iz})\\
&=&\alpha_1\Delta_{z,N_{j-1}}+\alpha_2\Delta_{T^{N_{j-1}}z,{t_jn_j}}+\alpha_3\Delta_{T^{N_{j}}z,tn_{j+1}}\\
&+&\alpha_4\Delta_{T^{N_{j}+tn_{j+1}}z,{n-N_{j}-tn_{j+1}}},
\end{eqnarray*}
where
$$\alpha_1:=\frac{N_{j-1}}{n},~~\alpha_2:=\frac{t_{j}n_{j}}{n},\alpha_3:=\frac{tn_{j+1}}{n},~~\alpha_4:=\frac{n-N_j-tn_{j+1}}{n}.$$
It is obvious that $\sum_{i=1}^4{\alpha_i}=1.$
By Proposition \ref{aff}, it follows that
\begin{eqnarray*}
&&d^*\left(\alpha_1\mu_{j-1}+\alpha_2\mu_j+(\alpha_3+\alpha_4)\mu_{j+1},\Delta_{z,n}\right)\\
&\leqslant&\alpha_1+\alpha_2d^*(\mu_j,\Delta_{T^{N_{j-1}}z,{t_jn_j}})+\alpha_3d^*(\mu_{j+1},\Delta_{T^{N_{j}}z,tn_{j+1}})\\&+&\alpha_4d^*(\mu_{j+1},\Delta_{T^{N_{j}+tn_{j+1}}z,{n-N_{j}-tn_{j+1}}}).
\end{eqnarray*}
We will  estimate the first three  terms  in the right-hand side of the above inequlaity in advance. By the definition of $N_j$, we have

$$\alpha_1=\frac{N_{j-1}}{n}\leqslant\frac{N_{j-1}}{N_j}\leqslant\frac{1}{N_{j-1}}.$$
Recalling the construction of $H_{j}$, there exists $y\in M_{j}$ such that $y|_1^{n_{j}}=\widetilde{x}_{j}$. Note that $T^{N_{j-1}}z\in M_j$. By Proposition \ref{app} we have
$$d^*(\mu_{j},\Delta_{y,n_{j}})\leqslant\frac{1}{4^{j}}~\textmd{and}~d^*(\Delta_{y,n_{j}},\Delta_{T^{N_{j-1}}z,n_{j}})\leqslant\frac{1}{j}.$$
By the sub-affinity of the metric $d^*$, it is easy to show $$d^*(\Delta_{y,n_{j}},\Delta_{T^{N_{j-1}}z,{t_{j}n_{j}}})\leqslant\frac{1}{j}.$$
Hence

\begin{eqnarray*}
d^*(\mu_{j},\Delta_{T^{N_{j-1}}z,{t_{j}n_{j}}})&\leqslant&d^*(\mu_{j},\Delta_{y,n_{j}})
+d^*(\Delta_{y,n_{j}},\Delta_{T^{N_{j-1}}z,{t_{j}n_{j}}})\\
&\leqslant&\frac{1}{4^{j}}+\frac{1}{j}.
\end{eqnarray*}
Similar arguments as above yield


$$d^*(\mu_{j+1},\Delta_{T^{N_{j}}z,tn_{j+1}})\leqslant\frac{1}{4^{j+1}}+\frac{1}{j+1}.$$
For the last term, we need to deal with two cases separately.

 Case 1: $N_j+tn_{j+1}\leqslant n<N_j+tn_{j+1}+m_{j+1}$.
In this case, we have
  $$\alpha_4=\frac{n-N_j-tn_{j+1}}{n}\leqslant\frac{m_{j+1}}{n}
\leqslant\frac{m_{j+1}}{n_j}\leqslant\frac{1}{m_{j+1}}.$$

Case 2: $N_j+tn_{j+1}+m_{j+1}\leqslant n<N_j+(t+1)n_{j+1}$.
In this case, similar arguments as on the above yield
$$d^*(\mu_{j+1},\Delta_{T^{N_{j}+tn_{j+1}}z,{n-N_{j}-tn_{j+1}}})\leqslant\frac{1}{4^{j+1}}+\frac{1}{j+1}.$$
Then we have
\begin{eqnarray*}
&&d^*\left(\alpha_1\mu_{j-1}+\alpha_2\mu_j+(\alpha_3+\alpha_4)\mu_{j+1},\Delta_{z,n}\right)\\
&\leqslant&\frac{1}{N_{j-1}}+(\frac{1}{4^j}+\frac{1}{j})+(\frac{1}{4^{j+1}}+\frac{1}{j+1})+(\frac{1}{m_{j+1}}+\frac{1}{4^{j+1}}+\frac{1}{j+1})\\
&\to&0~\textmd{as} ~j\to\infty.
\end{eqnarray*}
This combined with $\lim_{j\rightarrow\infty}d^*(\mu,\alpha_1\mu_{j-1}+\alpha_2\mu_j+(\alpha_3+\alpha_4)\mu_{j+1})=0,$  yields
$$\lim_{n\rightarrow\infty}d^*(\mu,\Delta_{z,n})=0,$$
which implies $z\in G_{\mu}$.

Noting that the integer $t_j$ for defining  $N_j~(j\geqslant1)$  can be taken arbitrarily large, we can construct a $\mu$-generic point $z=(z_n)_{n\geqslant1}$ such that $z_n\leqslant a_n$ for all $n\geqslant1$.
\end{proof}

\section{The lower bound of the Hausdorff dimension of $G_\mu$}
 In this section, we will prove the lower bound for the Hausdorff dimension of $G_\mu$. It is well known that the Gibbs measure $\nu$ is ergodic (\cite{Sarig2}, p.99). If $\nu=\mu$, by Birkhoff Ergodic Theorem and Lemma \ref{cr} we have $\nu(G_\nu)=1.$ Then $\dim_\nu G_\nu=1.$ Since $\beta(\nu|\nu)=1,$ we have finished the proof of Theorem \ref{thm}  in the case of $\nu=\mu.$ In the sequel, we  consider the case where $\mu\neq\nu.$

First, we  prove that $\alpha_\nu$ is a  lower bound.
\begin{proposition} \label{alpha-upperbound}
Let $\mu\in\mathcal{M}(X,T)$ be an invariant Borel probability measure.
 Assume that $\varphi$ has summable variations and admits a unique Gibbs measure $\nu$ with convergence exponent $\alpha_\nu$. Then  we have

$$\dim_\nu G_\mu\geqslant\alpha_\nu.$$
\end{proposition}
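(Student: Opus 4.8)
The plan is to establish $\dim_\nu G_\mu \geq \gamma$ for an arbitrary $\gamma\in(0,\alpha_\nu)$ and then let $\gamma\uparrow\alpha_\nu$. Since $\gamma<\alpha_\nu$, the series $\sum_{n\geq1}\nu([n])^\gamma$ diverges. For each integer $N$ set $Z_N=\sum_{n=1}^N\nu([n])^\gamma$ and let $p_N$ be the probability on $\{1,\dots,N\}$ given by $p_N(i)=\nu([i])^\gamma/Z_N$. Two elementary facts drive the argument. First, since $Z_N\to+\infty$, the identity $-\log p_N(i)=-\gamma\log\nu([i])+\log Z_N$ gives $-\log p_N(i)\geq-\gamma\log\nu([i])$ once $N$ is large. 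Second, the weighted average $w_N:=-\sum_{i=1}^N p_N(i)\log\nu([i])$ tends to $+\infty$, because the symbols $i$ with $\nu([i])$ bounded below are finite in number and contribute only boundedly to the divergent $Z_N$, so the $(-\log\nu)$-weighted mean blows up. Thus each ``free symbol'' drawn from $p_N$ carries an arbitrarily large expected $\nu$-cost, while its self-information is only the fraction $\gamma$ of that cost.

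I would then build a Cantor subset $E\subset G_\mu$ from the seed. Fix a seed $z\in G_\mu$ with $z_n\leq a_n$ for a slowly increasing sequence $a_n$ (Proposition \ref{seed}), and cut $z$ into consecutive blocks of lengths $L_1,L_2,\dots$ with $L_k\to\infty$. Form the points of $E$ by inserting, after the $k$-th block, one free coordinate ranging over $\{1,\dots,N_k\}$ and keeping the seed symbols elsewhere. Because the inserted coordinates form a set of positions of density zero (the gaps $L_k\to\infty$), perturbing them changes the orbit measures $\Delta_{x,n}$ by an amount tending to $0$ in $d^*$, by the coordinatewise uniform-approach mechanism of Proposition \ref{measure uniform}; since $z$ is $\mu$-generic, every $x\in E$ is then $\mu$-generic, so $E\subset G_\mu$.

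On $E$ I would place the measure $m$ making the free coordinates independent, with the $k$-th one distributed as $p_{N_k}$. Writing a cylinder mass via Lemma \ref{joint} as $-\log\nu([x_1\cdots x_n])=\sum_{i=1}^n(-\log\nu([x_i]))+O(n)$, the numerator is $-\log m([x_1\cdots x_n])=\sum_{\text{free }p_k\leq n}(-\log p_{N_k}(x_{p_k}))\geq\gamma\sum_{\text{free }p_k\leq n}(-\log\nu([x_{p_k}]))$. The seed coordinates contribute to $-\log\nu([x_1\cdots x_n])$ a total bounded by $\sum_{i\leq n}(-\log\nu([z_i]))$, which grows only slowly because $z_i\leq a_i$; the free coordinates contribute $\sum_{\text{free}}(-\log\nu([x_{p_k}]))$, which by the divergence of $w_{N_k}$ can be made to dominate both the seed total and the $O(n)$ quasi-Bernoulli error at every scale, provided $N_k$ grows fast enough relative to the cumulative block lengths. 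Under this domination, $\liminf_n\frac{\log m([x_1\cdots x_n])}{\log\nu([x_1\cdots x_n])}\geq\gamma$ for $m$-almost every $x$, and Billingsley's mass distribution principle (\cite{B}) yields $\dim_\nu E\geq\gamma$; hence $\dim_\nu G_\mu\geq\gamma$, and $\gamma\uparrow\alpha_\nu$ finishes the proof.

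The main obstacle is the simultaneous control at \emph{all} scales $n$: between two consecutive free coordinates the numerator $-\log m$ is frozen while the denominator keeps growing along the seed block, which would depress the ratio. Defeating this is exactly the role of the dominance of the free $\nu$-cost over the seed cost plus the $O(n)$ error, and it forces the two growth schedules to be tuned against each other — the block lengths $L_k$ must stay large enough for the density-zero genericity, while the alphabet sizes $N_k$ must stay much larger so that $w_{N_k}$ overwhelms the accumulated seed weight. A secondary technical point is passing from the expected free cost $w_{N_k}$ to an almost-sure lower bound on $\sum(-\log\nu([x_{p_k}]))$, which I would handle by a standard second-moment or Borel--Cantelli concentration estimate for the independent free coordinates.
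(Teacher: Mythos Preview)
Your approach shares the paper's architecture --- seed from Proposition \ref{seed}, a Cantor subset of $G_\mu$ obtained by freeing sparsely many coordinates, then Billingsley --- but differs in how the free coordinates are populated. The paper first proves Lemma \ref{m-con-exp}: it reorders symbols by decreasing $\nu$-mass, extracts a subsequence $\{s_k\}$ along which the $\limsup$ defining $\alpha_\nu$ is attained, and shows that for symbols in the narrow window $\pi((s_k-s_k^\delta,s_k])$ one has $-\ln\nu([\omega])\asymp \alpha_\nu^{-1}\ln s_k$. The Cantor set is then built by placing free coordinates at the square positions $k^2$, each ranging over that window, with the \emph{uniform} measure. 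Because every admissible choice has $-\ln\nu$ in a fixed band, the Billingsley ratio is bounded below \emph{deterministically} for every point of the Cantor set, and no concentration argument is needed; the scheduling is handled by choosing the seed (via Proposition \ref{seed}) to satisfy the explicit growth condition \eqref{eq36} against $\sum\ln s_k$.

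Your tilted measure $p_N(i)\propto\nu([i])^\gamma$ is a natural alternative, and your observations $-\log p_N(i)\ge -\gamma\log\nu([i])$ and $w_N\to\infty$ are correct. The weak spot is the step you call ``standard second-moment or Borel--Cantelli concentration'': under $p_N$ the variable $-\log\nu$ is unbounded above and can be highly spread, so $\mathrm{Var}/w_N^2$ need not be small, and summability for Borel--Cantelli is not automatic. The cleanest repair is to condition $p_{N_k}$ on a level set $\{i:-\log\nu([i])\in[A_k,2A_k]\}$, which makes the free contribution deterministic up to a factor $2$ --- and this is precisely what the paper's window $\pi((s_k-s_k^\delta,s_k])$ accomplishes. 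There is also a genuine scheduling constraint you underplay: at the scale just before the $(k{+}1)$-th free position the denominator already contains the seed block of length $L_{k+1}$, so you cannot choose $N_k$ ``after'' all the $L$'s; you must interleave the choices (e.g.\ fix $L_{k+1}$ small relative to the free cost already secured by $N_1,\dots,N_k$). The paper sidesteps this by fixing the free positions at squares once and for all and pushing all the tuning into the choice of the seed and of $\{s_k\}$.
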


To prove this lower bound, we will
construct a Cantor subset of $G_\mu$ and apply  Billingsley's theorem. Before so
doing, we  need the following lemma about the convergence exponent $\alpha_{\nu}$, which can be found in \cite{FLMW}. An ordering of cylinders is considered according to the
sizes of  $\nu$-measures of the $1$- cylinders.

\begin{lemma} \label{m-con-exp}
Let $\alpha_{\nu}$ be the  convergence exponent of $\nu$. Consider a bijection $\pi:\mathbb{N}\rightarrow \mathbb{N}$ such that $\nu([\pi(1)])\geqslant\nu([\pi(2)])\geqslant\cdots.$  Then there exists an increasing sequence $\{s_k\}_{k\geqslant1}\subset\mathbb{N}$ such that
\begin{equation}\label{eq31}
\sum_{k=1}^n\ln {s_k}\gg n^3,
\end{equation}
and for any $\varepsilon>0$ and any $0<\delta<1$, there exists an integer $N(\varepsilon,\delta)$ such that for $k\geqslant N$ we have
\begin{equation}\label{eq32}
s_k-s_k^\delta>s_k^{1-\varepsilon},
\end{equation}
and for $\pi^{-1}(\omega)\in(s_k-s_k^\delta,s_k]$, we have
\begin{equation}\label{eq33}
({1-\varepsilon})\ln\pi^{-1}(\omega)<{-\alpha_{\nu}}\ln\nu([\omega])<
\frac{1+\varepsilon}{1-\varepsilon}\ln\pi^{-1}(\omega).
\end{equation}
\end{lemma}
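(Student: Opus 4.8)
The plan is to restate everything in terms of the reordered sequence of masses and a single asymptotic formula for $\alpha_\nu$. Write $p_j:=\nu([\pi(j)])$, so that $(p_j)_{j\geqslant1}$ is non-increasing, $\sum_j p_j=1$, hence $p_j\to0$ and $f(j):=-\ln p_j$ is non-decreasing with $f(j)\to+\infty$. The first and main structural step is to prove the formula
$$\alpha_\nu=\limsup_{j\to\infty}\frac{\ln j}{-\ln p_j}.$$
For $t$ strictly larger than this $\limsup$, one has $\frac{\ln j}{-\ln p_j}<t'<t$ for all large $j$, whence $p_j^{\,t}<j^{-t/t'}$ with $t/t'>1$, so $\sum_j p_j^{\,t}<\infty$ and $\alpha_\nu\leqslant t$; for $t$ strictly smaller, a subsequence $j_i\to\infty$ with $\frac{\ln j_i}{-\ln p_{j_i}}>t'>t$ gives, by monotonicity, $\sum_{j\leqslant j_i}p_j^{\,t}\geqslant j_i\,p_{j_i}^{\,t}>j_i^{\,1-t/t'}\to\infty$, so $\alpha_\nu\geqslant t$. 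Letting $t$ approach the $\limsup$ from both sides yields the formula. Throughout I assume $\alpha_\nu>0$, as is implicit in the statement of \eqref{eq33}.

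Set $g(j):=\frac{\ln j}{f(j)}$, so that $\limsup_j g(j)=\alpha_\nu$. The second step is to select the sequence $\{s_k\}$. Since the $\limsup$ equals $\alpha_\nu$, for each $k$ the set of indices $j$ with $|g(j)-\alpha_\nu|<1/k$ is infinite; I would therefore extract an increasing subsequence $s_1<s_2<\cdots$ with $g(s_k)\to\alpha_\nu$ and, crucially, with $s_k$ chosen large enough at each stage (e.g.\ $\ln s_k\geqslant k^3$) that $\sum_{k=1}^n\ln s_k\gg n^3$, giving \eqref{eq31}; this is possible precisely because the good set is infinite and imposes no upper constraint on how fast $s_k$ may grow. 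Inequality \eqref{eq32} is then immediate: since $0<\delta<1$ we have $s_k-s_k^{\delta}=s_k(1-s_k^{\delta-1})$, which exceeds $s_k^{1-\varepsilon}$ once $k$ is large.

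It remains to verify \eqref{eq33} uniformly for $j:=\pi^{-1}(\omega)\in(s_k-s_k^{\delta},s_k]$, and this is the heart of the argument. Rewriting \eqref{eq33} as $\frac{\alpha_\nu(1-\varepsilon)}{1+\varepsilon}<g(j)<\frac{\alpha_\nu}{1-\varepsilon}$, the upper bound holds for \emph{all} large $j$ merely because $\limsup_j g(j)=\alpha_\nu<\frac{\alpha_\nu}{1-\varepsilon}$, so it needs nothing about the block. The lower bound is where the block and the monotonicity of $f$ enter: for $j$ in the block one has $f(j)\leqslant f(s_k)$ because $j\leqslant s_k$ and $f$ is non-decreasing, while $f(s_k)=\frac{\ln s_k}{\alpha_\nu}(1+o(1))$ since $g(s_k)\to\alpha_\nu$; combining these with $\ln j\geqslant\ln(s_k-s_k^{\delta})=\ln s_k\,(1+o(1))$ (again using $\delta<1$) gives
$$g(j)=\frac{\ln j}{f(j)}\geqslant\frac{\ln(s_k-s_k^{\delta})}{f(s_k)}=\alpha_\nu\,(1+o(1))>\frac{\alpha_\nu(1-\varepsilon)}{1+\varepsilon}$$
for all $k$ beyond some $N(\varepsilon,\delta)$. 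The main obstacle is exactly this uniform lower bound across the whole block $(s_k-s_k^{\delta},s_k]$: the $\limsup$ only controls $g$ at isolated indices, and it is the monotonicity of $f=-\ln p_j$ together with $\ln(s_k-s_k^{\delta})\sim\ln s_k$ that propagates the near-optimal ratio at the endpoint $s_k$ downward across the entire block.
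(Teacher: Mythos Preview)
Your proof is correct and follows essentially the same approach as the paper's: both derive (or cite) the formula $\alpha_\nu=\limsup_{j\to\infty}\frac{\ln j}{-\ln p_j}$, extract a rapidly growing subsequence $\{s_k\}$ along which this $\limsup$ is realised, observe that \eqref{eq32} is elementary, and then split \eqref{eq33} into one inequality that holds for all large $j$ directly from the $\limsup$ and one that is propagated across the block $(s_k-s_k^{\delta},s_k]$ via the monotonicity of $-\ln p_j$ together with $\ln(s_k-s_k^{\delta})\sim\ln s_k$. The only cosmetic difference is that the paper cites the $\limsup$ formula from P\'olya--Szeg\H{o} while you supply the short proof, and you phrase the final step in terms of $g(j)=\ln j/f(j)$ rather than working with $-\alpha_\nu\ln\nu([\pi(n)])$ directly.
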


\begin{proof}
Since the  convergence exponent of $\nu$ is invariant under the bijection $\pi$, we have (see \cite{PS}, p. 26)
\begin{equation}\label{eq34}
\alpha_{\nu}=\limsup_{n\rightarrow\infty}\frac{\ln n}{-\ln \nu([\pi(n)])}.
\end{equation}
Therefore, one may choose  an increasing sequence $\{s_k\}_{k\geqslant1}$ satisfying $\eqref{eq31}$ such that the limit in \eqref{eq34} along with $s_k$ exists. So, for any
$0<\varepsilon,\delta<1,$ there exists an integer $N_1=N_1(\varepsilon,\delta)$ such that for all $k\geqslant N_1,$ $\eqref{eq32}$ is satisfied and
\begin{equation}\label{zz}
({1-\varepsilon})\ln s_k<-\alpha_{\nu}\ln\nu([\pi(s_k)])<({1+\varepsilon})\ln s_k.
\end{equation}
On the other hand, by $\eqref{eq34}$, there exists an integer $N_2$ such that for all $n\geqslant N_2$
\begin{equation}\label{eq35}
-\alpha_{\nu}\ln\nu([\pi(n)])>(1-\varepsilon)\ln n.
\end{equation}
Thus if we take $N=\max\{N_1,N_2\}$  then by$\eqref{eq32}$, $\eqref{zz}$ and $\eqref{eq35}$, for all $n\in(s_k-s_k^\delta,s_k](k\geqslant N)$, we have
 \begin{eqnarray*}
(1-\varepsilon)\ln n&<&-\alpha_{\nu}\ln\nu([\pi(n)])\\
&\leqslant&-\alpha_{\nu}\ln \nu([\pi(s_k)])\\
&<&(1+\varepsilon)\ln s_k<\frac{1+\varepsilon}{1-\varepsilon}\ln (s_k-s_k^\delta)
<\frac{1+\varepsilon}{1-\varepsilon}\ln n.
\end{eqnarray*}
In other words, for $\pi^{-1}(\omega)\in(s_k-s_k^\delta,s_k](k\geqslant N)$, we have $\eqref{eq33}.$
\end{proof}

 For any $0<\varepsilon,\delta<1,$ let $N$ and $\{s_k\}_{k\geqslant1}$
  be the same as in Lemma \ref{m-con-exp}. By Proposition \ref{seed} and $\eqref{eq31}$, we can choose a  seed
$z=(z_n)_{n\geqslant1}\in G_{\mu}$ such that
\begin{equation}\label{eq36}
{-}\sum_{k=1}^{(n+1)^2}\ln{\nu([z_{k}])}\ll \sum_{k=1}^n{\ln{s_k}}.
\end{equation}
We  use this seed to generate a  Cantor subset in $G_\mu$ large enough. Roughly speaking, we replace the word $z_{k^2}$  by any word in $\pi((s_k-s_k^\delta,s_k])$. More precisely, define

$$F_z(\varepsilon,\delta)=\Bigg\{x\in X:x_n\Big\{\begin{array}{lc}
\in\pi((s_k-s_k^\delta,s_k]),~\textmd{if}~ n=k^2 ~\textmd{and}~k>N,\\
=z_n,~\textmd{otherwise}.\end{array}\Bigg\}$$

Since our modification of $z$ is made on square integer coordinates  which are of zero density,  it is easy to check  $F_z(\varepsilon,\delta)\subset G_\mu.$
The following proposition immediately implies that $\dim_\nu G_\mu\geqslant \alpha_{\nu}.$
\begin{proposition}\label{billingsley}
For any $0<\varepsilon<1$ and $0<\delta<1$, we have
$$\dim_\nu F_z(\varepsilon,\delta)\geqslant \frac{\alpha_{\nu}(1-\varepsilon)\delta}{1+\varepsilon}.$$
\end{proposition}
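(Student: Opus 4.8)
The plan is to bound the lower Billingsley dimension of the Cantor set $F_z(\varepsilon,\delta)$ from below by constructing a natural probability measure $\lambda$ supported on $F_z(\varepsilon,\delta)$ and applying the mass distribution principle (Billingsley's theorem), which states that if $\liminf_{n\to\infty}\frac{\ln\lambda([x|_1^n])}{\ln\nu([x|_1^n])}\geqslant s$ for $\lambda$-almost every $x$, then $\dim_\nu F_z(\varepsilon,\delta)\geqslant s$. The measure $\lambda$ should be the one that, at each free coordinate $n=k^2$ (for $k>N$), distributes mass uniformly (or proportionally to some convenient weight) over the allowed digits $\pi((s_k-s_k^\delta,s_k])$, and is deterministic on all other coordinates since $x_n=z_n$ is forced there. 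Concretely, the number of choices at level $k^2$ is roughly $s_k^\delta$ (by \eqref{eq32}), so a point of $F_z(\varepsilon,\delta)$ is determined by an independent sequence of such choices.

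First I would set up $\lambda$ as the product measure assigning equal weight $1/\lfloor s_k^\delta\rfloor$ to each admissible digit at coordinate $k^2$. Then for a cylinder $[x|_1^n]$ I would compute the numerator $\ln\lambda([x|_1^n])=-\sum_{k>N,\,k^2\leqslant n}\ln\lfloor s_k^\delta\rfloor\approx-\delta\sum_{k^2\leqslant n}\ln s_k$, where the sum runs over the free coordinates not exceeding $n$. For the denominator $\ln\nu([x|_1^n])$, I would split the product $\nu([x|_1^n])$ into the contributions from the forced coordinates (governed by the seed $z$) and the free coordinates. The quasi-Bernoulli property (Lemma \ref{joint}) lets me replace $\ln\nu([x|_1^n])$ by $\sum_{i=1}^n\ln\nu([x_i])$ up to an additive error linear in $n$, which is negligible after dividing by the dominant term. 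For the free coordinates, the two-sided estimate \eqref{eq33} controls $-\alpha_\nu\ln\nu([x_{k^2}])$ in terms of $\ln\pi^{-1}(x_{k^2})\approx\ln s_k$.

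The key quantitative input is the seed-growth condition \eqref{eq36}, which guarantees that the contribution of the \emph{forced} coordinates to $-\ln\nu([x|_1^n])$ is asymptotically dominated by $\sum_{k^2\leqslant n}\ln s_k$, the contribution of the free coordinates; together with \eqref{eq31} this ensures the forced part does not dilute the dimension. Assembling the estimates, for $n$ just past a free coordinate I would obtain
\begin{equation*}
\frac{\ln\lambda([x|_1^n])}{\ln\nu([x|_1^n])}
\geqslant\frac{\delta\sum_{k^2\leqslant n}\ln s_k\,(1+o(1))}
{\frac{1+\varepsilon}{\alpha_\nu(1-\varepsilon)}\sum_{k^2\leqslant n}\ln s_k\,(1+o(1))}
=\frac{\alpha_\nu(1-\varepsilon)\delta}{1+\varepsilon}\,(1+o(1)),
\end{equation*}
using the upper estimate in \eqref{eq33} to bound the denominator. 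The main obstacle I anticipate is handling $n$ that fall strictly \emph{between} consecutive free coordinates $k^2$ and $(k+1)^2$: here the cylinder grows by forced digits only, and I must verify that the ratio does not dip below the target, which is exactly where the sparsity of the free coordinates (they sit at squares) combined with \eqref{eq36} and the superquadratic growth \eqref{eq31} must be invoked to show the accumulated forced mass between free coordinates stays asymptotically negligible relative to $\sum_{k^2\leqslant n}\ln s_k$. Once the liminf is controlled uniformly across all $n$ for $\lambda$-a.e. $x$, Billingsley's theorem yields the stated bound.
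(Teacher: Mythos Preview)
Your proposal is correct and follows essentially the same route as the paper: the same uniform product measure $\lambda$ on the free coordinates, the quasi-Bernoulli decomposition of $\ln\nu([x|_1^n])$, the upper bound from \eqref{eq33} on the free-coordinate contribution, and \eqref{eq36} together with \eqref{eq31} to absorb the forced-coordinate and $O(n)$ error terms. Your anticipated ``obstacle'' for $n$ between consecutive squares is exactly what the paper handles by noting that $\lambda([x|_1^n])$ is constant on $[k^2,(k+1)^2)$ while $-\ln\nu([x|_1^n])$ grows only by the forced part, which \eqref{eq36} (with $q=k$) bounds by $o\!\big(\sum_{r\leqslant k}\ln s_r\big)$.
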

\begin{proof}
In order to apply  Billingsley's theorem, we are going to construct a measure $\lambda$ supported by $F_z(\varepsilon,\delta)$ such that for any $x\in F_z(\varepsilon,\delta)$,
$$\liminf_{n\rightarrow\infty}\frac{\ln{\lambda([x|^n_1])}}{\ln\nu([x|^n_1])}\geqslant \frac{\alpha_{\nu}(1-\varepsilon)\delta}{1+\varepsilon}.$$
For $x\in F_z(\varepsilon,\delta)$ and for $k^2\leq n<(k+1)^2 (k\geq N)$ define

 $$\lambda([x|_1^n])=\prod_{r=N}^k\frac{1}{s^\delta_r}.$$
The measure $\lambda$ is well defined on $F_z(\varepsilon,\delta)$.

For any $x\in F_z(\varepsilon,\delta)$, by the quasi Bernoulli property of $\nu$ we have
$$\nu([x|^n_1])\geqslant\frac{1}{C^{n+1}}\prod_{k=1}^{n}\nu([x_{k}])=
\frac{1}{C^{n+1}}\prod_{m=1}^{*n}{\nu([z_{m}])}\prod_{r=N}^{k}{\nu([x_{r^2}])}$$
where $*$ signifies the absence of the square numbers in $[N,n]$ in the product.
In combination  with the definition of $\lambda$, this yields
$$\frac{\ln \lambda([x|^n_1])}{\ln\nu([x|^n_1])}\geqslant\frac{\delta\sum\limits_{r=N}^{k}{\ln {s_r}}}{(n+1)\ln C-\sum\limits_{m=1}^{n}{\ln \nu([z_{m}])}-\sum\limits_{r=N}^{k}{\ln{\nu([x_{r^2}])}}}.$$

Since for $x\in F_z(\epsilon,\delta), ~{x_{r^2}}\in\pi((s_r-s_r^\delta,s_r]),$ we get
 $$\pi^{-1}(x_{r^2})\in(s_r-s_r^\delta,s_r].$$ Thus by $\eqref{eq33}$, we have

$$-\ln\nu([x_{r^2}])<\frac{1}{\alpha_{\nu}}\frac{1+\epsilon}{1-\epsilon}\ln s_r.$$
Therefore,  by the above  inequality we have
$$\frac{\ln \lambda([x|^n_1])}{\ln\nu([x|^n_1])}>\frac{\delta\sum\limits_{r=N}^{k}{\ln {s_r}}}{(n+1)\ln C-\sum\limits_{m=1}^{(k+1)^2}{\ln \nu([z_{m}])}
+\frac{1}{\alpha_{\nu}}\frac{1+\epsilon}{1-\epsilon}\sum\limits_{r=N}^{k}{\ln s_r}}.$$


Letting $n\rightarrow\infty$ and  by $\eqref{eq36}$, we obtain, for any $x\in F_z(\epsilon,\delta),$
$$\liminf_{n\rightarrow\infty}{\frac{\ln \lambda([x|^n_1])}{\ln\nu([x|^n_1])}}\geqslant\frac{\alpha_{\nu}(1-\varepsilon)\delta}{1+\varepsilon}.$$

\end{proof}

Next, we  determine the other lower bound.

 \begin{proposition}\label{beta-lowerbound}
Let $\mu\in\mathcal{M}(X,T)$ be an invariant Borel probability measure.
 Assume that $\varphi$ has summable variations and admits a unique Gibbs measure $\nu$. Then  we have
$$\dim_\nu G_\mu\geqslant\beta(\nu|\mu).$$
\end{proposition}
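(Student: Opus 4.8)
The plan is to split on whether the relative entropy $h(\nu|\mu)$ is finite. If $h(\nu|\mu)=+\infty$, then Proposition \ref{alpha-beta} gives $\beta(\nu|\mu)\leqslant\alpha_\nu$, and since Proposition \ref{alpha-upperbound} already supplies $\dim_\nu G_\mu\geqslant\alpha_\nu$, the desired inequality $\dim_\nu G_\mu\geqslant\beta(\nu|\mu)$ follows immediately. Hence the real content is the case $0<h(\nu|\mu)<+\infty$ (recall that $\mu\neq\nu$ forces $h(\nu|\mu)>0$ by the discussion after \eqref{variationalprinple}). Then \eqref{relativeentropy} gives $h_\mu\leqslant h(\nu|\mu)<\infty$, and Proposition \ref{alpha-beta} identifies the target as $\beta(\nu|\mu)=h_\mu/h(\nu|\mu)$. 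If $h_\mu=0$ the bound is trivial, so I assume $h_\mu>0$ and aim to construct a Cantor subset of $G_\mu$ of Billingsley dimension at least $h_\mu/h(\nu|\mu)$.

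For the construction, let $\{\mu_j\}_{j\geqslant1}$ be the Markov approximation of $\mu$ and let $M_j,m_j,b_j$ be as in Proposition \ref{app}; by Propositions \ref{markov-app} and \ref{int-app} the quantities $h_{\mu_j}\to h_\mu$ and $h(\nu|\mu_j)\to h(\nu|\mu)$ are all finite. For each $j$ I fix a scale $n_j\geqslant m_j$ and let $H_j$ be the set of length-$n_j$ prefixes of points of $M_j$. Since the estimates of Proposition \ref{app} are uniform over $M_j$ for all $n\geqslant m_j$, every $w\in H_j$ simultaneously satisfies the Shannon--McMillan--Breiman bound $\mu_j([w])\leqslant e^{-n_j(h_{\mu_j}-4^{-j})}$, the Birkhoff bound $|S_{n_j}\varphi+n_jh(\nu|\mu_j)|\leqslant n_j4^{-j}$ (whence, via the Gibbsian property \eqref{gibbs1}, a two-sided control of $\nu([w])$), and the orbit-measure bound $d^*(\mu_j,\Delta_{y,n_j})\leqslant 4^{-j}$. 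Counting gives $\#H_j\geqslant(1-4^{-j})e^{n_j(h_{\mu_j}-4^{-j})}$. I then define $E$ to consist of all points $W_1W_2\cdots$ where $W_j=w_{j,1}\cdots w_{j,t_j}$ is a concatenation of $t_j$ words chosen freely from $H_j$, exactly mimicking the seed construction of Proposition \ref{seed}. Because each block's orbit measure is uniformly $d^*$-close to $\mu_j$ (by Proposition \ref{aff} and the bounds above), the same averaging argument as in Proposition \ref{seed} shows $E\subset G_\mu$. On the tree defining $E$ I place the natural measure $\lambda$ splitting mass equally among the $\#H_j$ choices at each word position.

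For the dimension estimate, at a level boundary $L_j=\sum_{i\leqslant j}t_in_i$ one has $-\ln\lambda([x|_1^{L_j}])=\sum_{i\leqslant j}t_i\ln\#H_i\geqslant\sum_{i\leqslant j}t_in_i(h_{\mu_i}-4^{-i})$ up to lower-order terms, while the quasi-Bernoulli property (Lemma \ref{joint}) together with the Birkhoff/Gibbs control of each factor $\nu([w])$ yields $-\ln\nu([x|_1^{L_j}])\leqslant\sum_{i\leqslant j}t_in_i(h(\nu|\mu_i)+4^{-i})+O\big(\sum_{i\leqslant j}t_i\big)$. Since $h_{\mu_i}/h(\nu|\mu_i)\to h_\mu/h(\nu|\mu)$ and $n_i\to\infty$, the ratio of these two expressions tends to $h_\mu/h(\nu|\mu)$. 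Applying Billingsley's theorem in the form used in Proposition \ref{billingsley} then gives $\dim_\nu E\geqslant h_\mu/h(\nu|\mu)=\beta(\nu|\mu)$, provided the same lower bound for $\ln\lambda([x|_1^n])/\ln\nu([x|_1^n])$ persists for \emph{all} intermediate $n$.

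The main obstacle is precisely this uniform control at intermediate lengths: across an incomplete word $-\ln\lambda$ is locally constant while $-\ln\nu$ keeps growing, and in the countable alphabet a single symbol may carry arbitrarily small $\nu$-mass, so one must prevent the ratio from dipping. The remedy is to take the repetition counts $t_j$ so large (which is permitted, as in Proposition \ref{seed}) that each single word of length $n_j$ is negligible relative to the accumulated length, i.e. $n_j=o(L_{j-1})$ while $t_jn_j\gg L_{j-1}$, and to absorb the transient contribution at the start of each block using the uniform bound $|S_n\varphi|\leqslant b_j$ for $1\leqslant n\leqslant m_j$ from Proposition \ref{app}. With these choices the two estimates above hold uniformly in $n$, so the liminf of the ratio is at least $h_\mu/h(\nu|\mu)=\beta(\nu|\mu)$, which completes the proof.
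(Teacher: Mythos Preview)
Your proposal is correct and follows essentially the same strategy as the paper: dispose of the case $h(\nu|\mu)=+\infty$ via Propositions~\ref{alpha-beta} and~\ref{alpha-upperbound}, and in the finite case build a Cantor subset of $G_\mu$ out of length-$n_j$ prefixes of points in the good sets $M_j$ of Proposition~\ref{app}, then apply Billingsley's theorem.

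The execution differs in two harmless respects. First, the paper takes a single block $Y_j$ of prefixes at each level (your $t_j=1$) and forces the intermediate-length control by choosing $n_j\geqslant\max\{m_{j+1}b_{j+1},N_{j-1}^2\}$, whereas you keep $n_j$ modest and instead take $t_j$ large so that one incomplete word of length $n_j$ is negligible against $L_{j-1}$; these are interchangeable mechanisms for the same purpose. Second, and more noticeably, the paper places on its tree the product measure $\mu^*([x|_1^n])=\prod_k\mu_k([x|_{N_{k-1}+1}^{N_k}])$, so that the numerator $-\ln\mu^*$ is estimated \emph{directly} by the Shannon--McMillan--Breiman bound on each factor, while you use a uniform counting measure and invoke Shannon--McMillan--Breiman only through the cardinality bound $\#H_j\geqslant(1-4^{-j})e^{n_j(h_{\mu_j}-4^{-j})}$. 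The paper's choice avoids the counting step and gives a slightly cleaner intermediate-$n$ analysis (since $\mu^*$ is genuinely defined on all cylinders rather than being bounded by the value at the last full-word level), but your version is equally valid and is the more classical ``topological-entropy'' style argument.
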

\begin{proof}
If $h(\nu|\mu)=+\infty$, by Proposition  \ref{alpha-beta} we have
$\beta(\nu|\mu)\le \alpha_\nu$. Then there is nothing to prove because of Proposition \ref{alpha-upperbound}. It remains to consider the case of $h(\nu|\mu)<+\infty.$
Recall that we just need to consider the case of $\mu\neq\nu$. By  Proposition \ref{alpha-beta}, what we have to prove is $\dim_\nu G_\mu\geqslant\frac{h_\mu}{h(\nu|\mu)}$.

We are going to construct a subset $Y^*\subset G_\mu$ and a probability measure $\mu^*$ which have positive
mass on $Y^*$. Then we will apply the Billingsley theorem.

The symbols $\mu_j$, $m_j,~b_j$ and $M_j$  in what follows come from Proposition \ref{app}.
  Let
$$Y_j:=\{z|_1^{n_j}\in \mathbb{N}^{n_j}:z\in M_j\},$$
where $n_j$ is recursively defined as follows
$$n_j\geqslant\max\{m_{j+1}b_{j+1},N^2_{j-1}\},~N_0=0, ~N_j=\sum_{k=1}^jn_k,~ \forall j\geqslant1.$$
Define
$$Y^*:=\prod_{j=1}^\infty Y_j.$$
By a similar argument as in the proof of Proposition \ref{seed} one can show $Y^*\subset G_\mu.$
Now let us  construct a measure $\mu^*$.
  For $x\in X$ and $N_j< n\leqslant N_{j+1}$, define
$$\mu^*([x|^n_1])=\prod_{k=1}^{j}\mu_{k}([x|_{N_{k-1}+1}^{N_{k}}])\times\mu_{j+1}([x|_{N_j+1}^n]).$$
This defines a probability measure on $X$.

Since $\mu_j(M_j)>1-\frac{1}{4^j}$, a simple computation shows $\mu^*(Y^*)>\frac{2}{3}$.
For any $x\in Y^*$, by the definition of $\mu^*$ and the Gibbsian property of $\nu$  we have
\begin{eqnarray*}
\frac{\ln\mu^*([x|^n_1])}{\ln\nu([x|^n_1])}
&\geqslant&\frac{-\sum_{k=1}^{j}\ln\mu_{k}([x|_{N_{k-1}+1}^{N_{k}}])-
\ln\mu_{j+1}([x|_{N_j+1}^n])}{\ln C-\sum_{k=0}^{N_j-1}\varphi(T^kx)-\sum_{k=N_j}^{n-1}\varphi(T^kx)}.\\
\end{eqnarray*}
Here we need to deal with two cases separately.
If $N_j< n<N_j+m_{j+1}$, 
   we have
\begin{eqnarray*}
\frac{\ln\mu^*([x|^n_1])}{\ln\nu([x|^n_1])}
&\geqslant&\frac{-\sum_{k=1}^{j}\ln\mu_{k}([x|_{N_{k-1}+1}^{N_{k}}])}{\ln C-\sum_{k=0}^{N_j-1}\varphi(T^kx)-\sum_{k=N_j}^{n-1}\varphi(T^kx)}.
\end{eqnarray*}
For $1\leqslant k\leqslant j$, by the definition of $Y^*$ we have
$$T^{N_k}(x)\in M_{k+1}.$$
Hence, by   Proposition \ref{app} we have
             $$-\ln\mu_{k}([x|_{N_{k-1}+1}^{N_{k}}])\geqslant n_k(h_{\mu_k}-\frac{1}{4^k}),\,~-\sum_{k=T_j}^{n-1}\varphi(T^kx)\leqslant b_{j+1},$$
and
$$-\sum\limits_{k=0}^{N_j-1}\varphi(T^kx)\leqslant\sum\limits_{k=1}^{j}{n_k(h(\nu|\mu_k)-\frac{1}{4^k})}.$$
Thus we have
\begin{eqnarray*}
\frac{\ln\mu^*([x|^n_1])}{\ln\nu([x|^n_1])}
&\geqslant&\frac{\frac{1}{n}\sum_{k=1}^{j}{n_k(h_{\mu_k}-\frac{1}{4^k})}}{\frac{1}{n}\ln C
+\frac{1}{n}\sum_{k=1}^{j}{n_k(h(\nu|\mu_k)-\frac{1}{4^k})}+\frac{1}{n}b_{j+1}}.
\end{eqnarray*}
By Proposition \ref{markov-app} and  \ref{int-app}, we have $h_{\mu_k}\to h_\mu$ and $h(\nu|\mu_k)\to h(\nu|\mu)$ as $k\to\infty$.
Noting that  $$\frac{n-N_j}{n}< \frac{m_{j+1}}{n}\leqslant\frac{1}{b_{j+1}}\to0,\,\,\,\frac{ b_{j+1}}{n}<\frac{b_{j+1}}{n_{j}}\leqslant\frac{1}{m_{j+1}}\to0~\textmd{as}~ n\to\infty,$$ 
 we have
$$\liminf_{n\to\infty}\frac{\ln\mu^*([x|^n_1])}{\ln\nu([x|^n_1])}\geqslant\frac{h_\mu}{h(\nu|\mu)}.$$
If $N_j+m_{j+1}\leqslant n\leqslant N_{j+1}$, by Proposition \ref{app} we have
$$\sum_{k=T_j}^{n-1}\varphi(T^kx)\leqslant (n-N_j)(h(\nu|\mu_{j+1})-\frac{1}{4^{j+1}}).$$
Thus
\begin{eqnarray*}
\frac{\ln\mu^*[x|^n_1]}{\ln\nu([x|^n_1])}
&\geqslant&\frac{\frac{1}{n}\sum\limits_{k=1}^j{n_k(h_{\mu_k}-\frac{1}{4^k})}+\frac{n-N_j}{n}(h_{\mu_{j+1}}
-\frac{1}{4^{j+1}})}{\frac{1}{n}\ln C+\frac{1}{n}\sum\limits_{k=1}^j{n_k(h(\nu|\mu_k)-\frac{1}{4^k})}+\frac{n-N_j}{n}(h(\nu|\mu_{j+1})-\frac{1}{4^{j+1}})}\\
&\longrightarrow&\frac{h_\mu}{h(\nu|\mu)}~\textmd{as}~ n\to\infty.
\end{eqnarray*}
That means for any $x\in Y^*$  we have
 $$\liminf_{n\to\infty}\frac{\ln\mu^*([x|^n_1])}{\ln\nu([x|^n_1])}\geqslant\frac{h_\mu}{h(\nu|\mu)}.$$
 By Billingsley's theorem, we have $\dim_\nu Y^*\geqslant\frac{h_\mu}{h(\nu|\mu)}$.  Thus the proof is completed.
   \end{proof}

 By Propositions \ref{alpha-upperbound} and \ref{beta-lowerbound} we have the following lower bound.

\begin{theorem}  \label{lowerbound}
 Let $\mu\in\mathcal{M}(X,T)$ be an invariant Borel probability measure.
 Assume that $\varphi$ has summable variations and admits a unique Gibbs measure $\nu$ with convergence exponent $\alpha_\nu$. Then  we have $$\dim_\nu G_\mu\geqslant\max\{\alpha_\nu,\beta(\nu|\mu)\}.$$
 \end{theorem}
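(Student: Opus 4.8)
The plan is to obtain Theorem~\ref{lowerbound} as an immediate synthesis of the two lower bounds established just above, with no further analytic work required. The elementary observation driving the argument is that if a real number $a$ satisfies $\dim_\nu G_\mu \geqslant a$ and a real number $b$ satisfies $\dim_\nu G_\mu \geqslant b$, then automatically $\dim_\nu G_\mu \geqslant \max\{a,b\}$. Applying this with $a = \alpha_\nu$ and $b = \beta(\nu|\mu)$ yields the claim, provided the two individual inequalities are in hand.

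Those two inequalities are exactly Propositions~\ref{alpha-upperbound} and~\ref{beta-lowerbound}, both already proved. The first gives $\dim_\nu G_\mu \geqslant \alpha_\nu$ via the Cantor subset $F_z(\varepsilon,\delta) \subset G_\mu$ built on a seed $z$ (Proposition~\ref{seed}), together with Billingsley's theorem and the passage $\varepsilon \to 0$, $\delta \to 1$ in the bound of Proposition~\ref{billingsley}. The second gives $\dim_\nu G_\mu \geqslant \beta(\nu|\mu)$, splitting according to whether $h(\nu|\mu)$ is finite: when $h(\nu|\mu) = +\infty$ one invokes Proposition~\ref{alpha-beta} to reduce $\beta(\nu|\mu) \leqslant \alpha_\nu$ back to the first proposition, and when $h(\nu|\mu) < +\infty$ one constructs the set $Y^*$ and the measure $\mu^*$ and verifies $\liminf_{n\to\infty} \ln\mu^*([x|^n_1]) / \ln\nu([x|^n_1]) \geqslant h_\mu/h(\nu|\mu) = \beta(\nu|\mu)$ for every $x \in Y^*$, concluding again by Billingsley's theorem. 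Thus the formal proof of the present theorem is a single sentence: by Propositions~\ref{alpha-upperbound} and~\ref{beta-lowerbound}, $\dim_\nu G_\mu$ dominates both $\alpha_\nu$ and $\beta(\nu|\mu)$, hence their maximum.

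Consequently there is no genuine obstacle at the level of Theorem~\ref{lowerbound} itself; all the difficulty has already been absorbed into the two propositions. If any point of the overall lower-bound argument deserves to be flagged as delicate, it is the uniform control across all scales $n$ inside Proposition~\ref{beta-lowerbound} — the separate treatment of the ranges $N_j < n < N_j + m_{j+1}$ and $N_j + m_{j+1} \leqslant n \leqslant N_{j+1}$ by means of the uniform estimates of Proposition~\ref{app} — rather than anything occurring in the combination performed here.
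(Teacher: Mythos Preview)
Your proposal is correct and matches the paper's own treatment exactly: the paper gives no separate proof of Theorem~\ref{lowerbound} beyond the sentence ``By Propositions~\ref{alpha-upperbound} and~\ref{beta-lowerbound} we have the following lower bound,'' and your write-up simply unpacks this citation.
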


\section{The upper bound of the Hausdorff dimension of $G_\mu$}
In the section, we will prove the upper bound of $\dim_\nu G_\mu$. By Lemma \ref{cr}, $G_\mu$ is the set of those points $x$ such that
$$\forall ~[u]\in\mathcal{C}^*,~\lim_{n\to\infty}\Delta_{x,n}([u])=\mu([u]).$$
Thus, for any two fixed integers $N\geqslant1$ and $j\geqslant1$, we have
$$G_\mu\subset\left\{x\in X:\forall u\in\Sigma_N^j,~\lim_{n\to\infty}\Delta_{x,n}([u])=\mu([u])\right\}.$$
Note that $\Delta_{x,n}([u])$ is the frequency of appearance of $u$ in the word $x|_1^{n+|u|-1}$.
For every word ~$\omega\in\Sigma_N^n$ ~of length ~$n$~and every word~$u\in\Sigma_N^k$~of length ~$k$ with $k\leqslant n$, denote by $p(u|\omega)$ the frequency of appearances of $u$ in $\omega$, i.e.,
$$p(u|\omega)=\frac{\tau_u(\omega)}{n-k+1},$$
where $\tau_u(\omega)$ denotes the number of $j$ with $1\leqslant j\leqslant n-k+1,$  so that $\omega_j\cdots\omega_{j+k-1}=u.$
We need  a  combinatorial lemma.   
\begin{lemma}[\cite{GW}]\label{entropy-comb}
For any $h>0,~\delta>0,~k\in\mathbb{N}$ and  $n\in\mathbb{N}$ large enough, we have
$$\sharp\left\{\omega\in\Sigma_N^n:\sum_{u\in\Sigma_N^k}-p(u|\omega)\ln p(u|\omega)\leqslant kh\right\}\leqslant\exp(n(h+\delta)).$$
\end{lemma}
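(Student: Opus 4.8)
The plan is to avoid the overlaps inherent in the sliding-window frequencies by passing to \emph{disjoint} $k$-blocks, where the crude multinomial (type-counting) bound is available, and then to feed the hypothesis back in through concavity of the entropy function. Throughout, for a probability vector $Q$ on $\Sigma_N^k$ write $H(Q):=\sum_{u}-Q(u)\ln Q(u)$, so that the quantity appearing in the statement is $H(P_\omega)$ with $P_\omega(u)=p(u|\omega)$.

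First I would split the window positions into $k$ phases. For a residue $r\in\{0,1,\dots,k-1\}$, read $\omega\in\Sigma_N^n$ starting from position $r+1$ and cut it into consecutive disjoint length-$k$ blocks; this produces about $m:=\lfloor n/k\rfloor$ blocks, whose empirical distribution on $\Sigma_N^k$ I denote $Q_r$. The starting positions $1,\dots,n-k+1$ of the sliding windows split into the $k$ residue classes, so up to a boundary error of order $k/n$ the sliding frequencies are the average of the phase distributions, $P_\omega=\frac1k\sum_{r=0}^{k-1}Q_r$. By concavity of $H$,
$$\sum_{u}-p(u|\omega)\ln p(u|\omega)\;=\;H(P_\omega)\;\ge\;\frac1k\sum_{r=0}^{k-1}H(Q_r)\;\ge\;\min_{0\le r<k}H(Q_r),$$
again up to a vanishing boundary error. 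Hence every $\omega$ with $H(P_\omega)\le kh$ satisfies $H(Q_r)\le kh+o(1)$ for \emph{at least one} phase $r$, so the set to be counted is contained in $\bigcup_{r=0}^{k-1}\{\omega:H(Q_r)\le kh+o(1)\}$.

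Next I would estimate each phase by a standard type count over the finite alphabet $\Sigma_N^k$. For fixed $r$ and a prescribed disjoint-block type $Q$, a word is determined by choosing its $\le k-1$ boundary symbols (a factor $N^{k}$) and arranging the $m$ blocks, of which at most $\binom{m}{(mQ(u))_u}\le\exp\big(mH(Q)\big)$ are possible; moreover the number of types $Q$ is at most $(m+1)^{N^k}$. Both prefactors are subexponential in $n$ for fixed $k,N$. Summing over types with $H(Q)\le kh+o(1)$ and using $mk\le n$,
$$\sharp\{\omega:H(Q_r)\le kh+o(1)\}\;\le\;(m+1)^{N^k}\,N^{k}\,\exp\big(m(kh+o(1))\big)\;\le\;\exp\big(nh+o(n)\big).$$
Taking the union over the $k$ phases only multiplies by $k$, so the total is still $\exp(nh+o(n))$, which is $\le\exp(n(h+\delta))$ once $n$ is large enough.

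The one delicate point is the reduction in the second paragraph: I must verify that the boundary discrepancies — the at most $k-1$ truncated symbols in each phase and the gap between $n-k+1$ and $km$ — perturb each empirical entropy by a quantity that is $o(1)$ uniformly as $n\to\infty$ for fixed $k$, so that the concavity inequality and the threshold $kh$ survive with only an $o(1)$ loss that can be absorbed into $\delta$. Everything else is routine: the Stirling estimate $\binom{m}{(mQ(u))_u}\le e^{mH(Q)}$ and the polynomial bound on the number of types, both standard for the finite alphabet $\Sigma_N^k$.
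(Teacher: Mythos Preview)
The paper does not supply its own proof of this lemma: it is simply quoted from Glasner--Weiss \cite{GW} and used as a black box in the upper-bound argument. So there is nothing to compare against on the paper's side.

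Your argument is correct. The phase decomposition is in fact \emph{exact}, not approximate: every sliding-window position $j\in\{1,\dots,n-k+1\}$ lies in exactly one residue class modulo $k$, and within each class the corresponding $k$-blocks are pairwise disjoint. Hence $P_\omega=\sum_{r}\frac{m_r}{n-k+1}Q_r$ is a genuine convex combination (with weights $m_r/(n-k+1)$, all within $O(1/n)$ of $1/k$), and concavity of $H$ gives $H(P_\omega)\ge\min_r H(Q_r)$ with no error term at all. The only places where boundary corrections enter are (i) the at most $2k$ free symbols outside the $m_r$ blocks of phase $r$, contributing a harmless factor $N^{2k}$, and (ii) the discrepancy $m_r k\le n$, which you already absorb by writing $e^{m_r\cdot kh}\le e^{nh}$. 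The polynomial factors $k\cdot(m+1)^{N^k}\cdot N^{2k}$ are subexponential in $n$ for fixed $k,N$, so the conclusion follows. Your ``delicate point'' is therefore less delicate than you fear: once the phases are set up via residue classes of sliding-window \emph{starting positions}, the concavity step needs no $o(1)$ correction.
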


\begin{theorem}  \label{upperbound}
Let $\mu\in\mathcal{M}(X,T)$. Assume that $\varphi$ has summable variations and admits a unique Gibbs measure $\nu$ with convergence exponent $\alpha_\nu$. Then  we have
  $$\dim_\nu G_\mu\leqslant\max\{\alpha_\nu,\beta(\nu|\mu)\}.$$
\end{theorem}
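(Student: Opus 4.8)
The plan is to bound the Billingsley measure $\mathcal H^s_\nu(G_\mu)$ from above for every $s>\max\{\alpha_\nu,\beta(\nu|\mu)\}$ and then let $s$ decrease to $\max\{\alpha_\nu,\beta(\nu|\mu)\}$. Fix such an $s$ and choose parameters as follows. Since $s>\beta(\nu|\mu)=\limsup_k\limsup_N H_{k,N}(\mu,\mu)/H_{k,N}(\nu,\mu)$, I first fix an integer $k$ (as large as I like) with $\limsup_N H_{k,N}(\mu,\mu)/H_{k,N}(\nu,\mu)<s$, and then a threshold $N$ (as large as I like) so that $H_{k,N}(\mu,\mu)<s\,H_{k,N}(\nu,\mu)$ with a definite gap. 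Since $s>\alpha_\nu$, Lemma \ref{exponent-bound} (with $\gamma=s$) gives $R_N:=\sum_{a>N}\nu([a])^s<\infty$, and $R_N\to0$ as $N\to\infty$; I also fix a small density threshold $\eta>0$ and a small $\delta>0$, both to be sent to $0$ at the end. Because $\sum_{a>N}\mu([a])\to0$, I may further enlarge $N$ so that $\sum_{a>N}\mu([a])<\eta$.

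By Lemma \ref{cr}, a point $x$ lies in $G_\mu$ iff $\Delta_{x,n}([u])\to\mu([u])$ for every cylinder $[u]$, so for each $x\in G_\mu$ there is $n_x$ beyond which the prefix $\omega=x|_1^n$ is \emph{good}, meaning: (i) the frequency of coordinates of $\omega$ carrying a symbol $>N$ is at most $\eta$; and (ii) the empirical $k$-block distribution $(p(u|\omega))_{u\in\Sigma_N^k}$ is so close to $(\mu([u]))_{u\in\Sigma_N^k}$ that, on the finite set $\Sigma_N^k$, both the block entropy $\sum_{u}-p(u|\omega)\ln p(u|\omega)$ and the average $-\sum_{u}p(u|\omega)\ln\nu([u])$ are within $\delta$ of $H_{k,N}(\mu,\mu)$ and $H_{k,N}(\nu,\mu)$. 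Writing $A_m=\{x\in G_\mu:x|_1^n\text{ is good for all }n\ge m\}$, the sets $A_m$ increase to $G_\mu$, and for each fixed $m$ and each $n\ge m$ the good $n$-cylinders cover $A_m$. As $\delta_n\to0$ (Proposition \ref{metricuniform}), these are admissible covers for $\mathcal H^s_\nu$, so it suffices to bound $\sum_{\omega\text{ good}}\nu([\omega])^s$ uniformly in $n$ and show it tends to $0$.

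Fix $n$ and a good $\omega$; let $E$ be its set of large-symbol coordinates, $m=|E|\le\eta n$, with large symbols $a_1,\dots,a_m$, and let $\tilde\omega\in\Sigma_N^{n-m}$ be $\omega$ with these deleted. Splitting $\omega$ into the intervening small blocks and the single large symbols and applying the quasi-Bernoulli inequality (Lemma \ref{joint}) piece by piece gives $\nu([\omega])\le C^{O(m)}\,\nu([\tilde\omega])\prod_{l=1}^m\nu([a_l])$, the distortion $C^{O(m)}=e^{O(\eta n)}$ contributing a rate that vanishes as $\eta\to0$. Hence
$$\sum_{\omega\text{ good}}\nu([\omega])^s\le e^{O(\eta n)}\sum_{m\le\eta n}\binom{n}{m}R_N^{\,m}\sum_{\tilde\omega}\nu([\tilde\omega])^s,$$
the inner sum running over admissible $\tilde\omega\in\Sigma_N^{n-m}$ (whose block statistics are those of (ii), up to the bounded number of windows disturbed by the deletions). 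Lemma \ref{entropy-comb} bounds their number by $\exp((n-m)(H_{k,N}(\mu,\mu)/k+\delta))$, while the Gibbs property together with the $\ln\nu$-average in (ii) forces $\nu([\tilde\omega])^s\le\exp(-s\tfrac{n-m}{k}H_{k,N}(\nu,\mu)+o(n))$. With $A:=\tfrac1k\big(sH_{k,N}(\nu,\mu)-H_{k,N}(\mu,\mu)\big)-\delta>0$ the inner double sum is at most $e^{-A(n-m)+o(n)}$, so
$$\sum_{\omega\text{ good}}\nu([\omega])^s\le e^{o(n)}e^{-An}\sum_{m\le\eta n}\binom{n}{m}\big(R_Ne^{A}\big)^m\le e^{o(n)}\big(e^{-A}+R_N\big)^n.$$
Since $A$ is bounded below by a positive constant (if $h(\nu|\mu)<\infty$ then $A\to s\,h(\nu|\mu)-h_\mu-\delta>0$ as $N\to\infty$, and if $h(\nu|\mu)=\infty$ then $A\to\infty$) and $R_N\to0$, enlarging $N$ makes $e^{-A}+R_N<1$; taking $k$ large absorbs the $e^{o(n)}$. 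Thus the covering sums tend to $0$, so $\mathcal H^s_\nu(A_m)=0$ for all $m$, hence $\mathcal H^s_\nu(G_\mu)=0$ and $\dim_\nu G_\mu\le s$. Letting $s\downarrow\max\{\alpha_\nu,\beta(\nu|\mu)\}$ finishes the proof.

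The delicate point, and where I expect the main work to lie, is the simultaneous handling of two genuinely different counting mechanisms: the sparse large symbols must be controlled by the \emph{weighted} tail $R_N=\sum_{a>N}\nu([a])^s$ (cardinality is useless, the alphabet being infinite), finite exactly because $s>\alpha_\nu$; whereas the small-alphabet bulk must be controlled by the \emph{cardinality} count of Lemma \ref{entropy-comb}, defeated by the per-symbol cost $H_{k,N}(\nu,\mu)$ exactly because $s>\beta(\nu|\mu)$. The reason the two thresholds combine into a maximum rather than a sum is the elementary inequality $e^{-An}\sum_m\binom{n}{m}(R_Ne^A)^m\le(e^{-A}+R_N)^n$, which keeps the two contributions additive only at the level of a single symbol's weight. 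Rendering all Gibbs distortions negligible — the $C^{O(m)}$ from reinserting the large symbols (killed by $\eta\to0$) and the per-block factor absorbed into $o(n)$ (killed by $k\to\infty$) — and estimating the windows perturbed by the deletions are the technical hurdles to discharge carefully.
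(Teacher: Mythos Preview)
Your proposal is correct and follows the same overall strategy as the paper: cover $G_\mu$ by cylinders whose prefixes have controlled $k$-block statistics over $\Sigma_N^k$, split each prefix via the quasi-Bernoulli property (Lemma~\ref{joint}) into the rare coordinates carrying symbols $>N$ and the small-alphabet bulk, control the first by the convergent tail $\sum_{a>N}\nu([a])^s$ (finite since $s>\alpha_\nu$, this is Lemma~\ref{exponent-bound}), and defeat the second by the cardinality bound of Lemma~\ref{entropy-comb} together with the Gibbs averaging trick (the paper's oblique-diagonal summation) since $s>\beta(\nu|\mu)$.

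The organizational choice differs. The paper groups each prefix into maximal $(N,j)$-run subwords and intervening ``bad subwords'', then partitions by the number $t$ of runs and by the full length pattern $(r_1,n_1,\ldots,n_t,r_{t+1})$, bounding $\sharp\mathfrak{L}_t$ via Stirling's formula; the bad subwords are handled as whole blocks through Lemma~\ref{exponent-bound}. You instead delete the large symbols one at a time, record only their number $m$ and their positions via a single binomial coefficient $\binom{n}{m}$, and collapse the combinatorics to the elementary identity $e^{-An}\sum_{m}\binom{n}{m}(R_Ne^{A})^{m}\le(e^{-A}+R_N)^{n}$. Your route is somewhat more streamlined and makes transparent why the two thresholds combine as a maximum rather than a sum. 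The price is that the order in which $k,\eta,\delta,N$ are fixed must be stated with care so that all the $e^{o(n)}$ distortions---the $C^{O(m)}$ from Lemma~\ref{joint}, the $O(k)$ boundary terms in the averaging trick, and the $O(\eta k n)$ $k$-windows perturbed by the deletions (not ``bounded'' as you wrote, but of small density)---are eventually dominated by the fixed gap in $e^{-A}+R_N<1$; the paper's more elaborate bookkeeping makes these dependencies explicit.
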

\begin{proof}

We fix two integers $N\geqslant1$ and $j\geqslant1$, which first $N$ then $j$ will tend to the infinity. According to the above analysis,  we have
$$G_\mu\subset\left\{x\in X:\forall u\in \Sigma_N^j,~\lim_{n\to\infty}\Delta_{x,n}([u])=\mu([u])\right\}.$$
Note that $\Delta_{x,n}([u])=n^{-1}\tau_u(x|_{1}^{n+j-1})$. It follows that for any $\epsilon>0,$ we have

$$G_\mu\subset \bigcup_{l=1}^\infty\bigcap_{n=l}^\infty H_n(\epsilon,j,N),$$
where $$H_n(\epsilon,j,N):=\left\{x\in X:\forall u\in\Sigma^j_N,~\left|n^{-1}\tau_u(x|_{1}^{n+j-1})-\mu([u])\right|<\epsilon~\right\}.$$
 So, by the $\sigma$-stability of the Hausdorff dimension we have
 $$\dim_\nu G_\mu\leqslant \sup_{l\geqslant1}\dim_\nu\bigcap_{n=l}^\infty H_n(\epsilon,j,N).$$
 In order to estimate the dimension of $\bigcap_{n=l}^\infty H_n(\epsilon,j,N)$, let us consider the $(n+j-1)$-prefixes of the points in $H_n(\epsilon,j,N)$:

 $$\Lambda_n(\epsilon,j,N):=\Big\{x_1\cdots x_{n+j-1}\in\mathbb{N}^{n+j-1}:x\in H_n(\epsilon,j,N)\Big\}.$$
Let $\delta_n=\sup\{\nu([\omega]):\omega\in\mathbb{N}^n\}$. We have $\lim_{n\to\infty}\delta_n=0$ by Proposition \ref{metricuniform}. Then the cylinder set $\{[\omega]:\omega\in\Lambda_n(\epsilon,j,N)\}$ forms a $\delta_{n+j-1}$-covering of  $\bigcap_{n=l}^\infty H_n(\epsilon,j,N)$.
 Assume that $\gamma>\max\{\alpha_\nu,\beta(\nu|\mu)\}$ and $\gamma<3/2$ without loss of generality. By the definition of  $\gamma$- Hausdorff measure
$$\mathcal{H}^\gamma(\bigcap_{n=l}^\infty H_n(\epsilon,j,N))=\lim_{n\to\infty}\mathcal{H}_{\delta_{n+j-1}}^\gamma(\bigcap_{n=l}^\infty H_n(\epsilon,j,N))\leqslant\liminf_{n\to\infty}\sum_{\omega\in\Lambda_n(\epsilon,j,N)}\nu([\omega])^\gamma.$$
Given a word $\omega\in\Lambda_n(\epsilon,j,N)$,  we consider $(\tau_u(\omega))_{u\in\Sigma_N^j}$, which would be  called the appearance distribution with respect to $\Sigma_N^j$  of  $\omega$. Denote by $\mathfrak{D}_n(\epsilon,j,N)$ the set of such appearance distributions of all elements of $\Lambda_n(\epsilon,j,N)$. Given a distribution $(\tau_u)\in \mathfrak{D}_n(\epsilon,j,N)$, set
$$A((\tau_u)):=\{\omega\in\Lambda_n(\epsilon,j,N):\tau_u(\omega)=\tau_u,\,\forall u\in\Sigma_N^j\}.$$
Then $\Lambda_n(\epsilon,j,N)$ is partitioned  into $A((\tau_u))$'s. 
Note that there are ${N^j}$ possible words  $u$ in $\Sigma_N^j$ and that $n(\mu([u])-\epsilon)\leqslant\tau_u\leqslant n(\mu([u])+\epsilon)$, i.e.  $\tau_u$ varies in an interval of length  $2\epsilon n$.  It follows that
$\sharp \mathfrak{D}_n(\epsilon,j,N)\leqslant (2\epsilon n)^{N^j}$. This, together with the above partition, leads to
 \begin{equation}\label{uppercovering}
 \sum_{\omega\in\Lambda_n(\epsilon,j,N)}\nu([\omega])^\gamma\leqslant(2\epsilon n)^{N^j}\max_{(\tau_u)\in \mathfrak{D}_n(\epsilon,j,N)}\sum_{\omega\in A((\tau_u))}\nu([\omega])^\gamma.
 \end{equation}

 Our task is to estimate the sum on the right-hand side in the above inequality. We first decompose $A((\tau_u))$ into disjoint union of some sets. 
 Given $\omega=\omega_1\cdots\omega_{n+j-1}\in A((\tau_u)),$ we say $\omega_k\omega_{k+1}\cdots\omega_{k+m-1}$ is a {\bf{maximal $(N,j)$-run subword}} of $\omega$
 if the following conditions are satisfied
   \begin{enumerate}
\item[(1)]
$m\geqslant j$,
\item[(2)]
$\forall 0\leqslant i\leqslant m-1,~\omega_{k+i}\leqslant N\,\textmd{and} \,\,\omega_{k-1}>N,\, \omega_{k+m}>N.$
\end{enumerate}
  On the other hand, a subword between {\bf{maximal $(N,j)$-run subword}}s is called ``bad subword".
  The set $A((\tau_u))$ is just a collection of  words like
   \begin{equation}\label{length-dis}
   \omega=B_{r_1}W_{n_1}B_{r_2}\cdots W_{n_t}B_{r_{t+1}},
   \end{equation}
  where $B_{r_i}$ denotes ``bad subword" with length $r_i$ and $W_{n_i}$ denotes {\bf{maximal $(N,j)$-run subword}} with length $n_i$. Write
  $${K}:=\sum_{u\in\Sigma_N^j}\tau_u ~\text{and}~
  ~s:=\left\lfloor\frac{n-{K}}{j}\right\rfloor+1.$$
  It is easily seen that $t\leqslant s$. In other word, every element in $A((\tau_u))$ has at most $s$ {\bf{maximal $(N,j)$-run subword}}s. Furthermore, by writing  ${K}_t:=\sum_{i=1}^tn_i,$ we have $$K_t={K}+t(j-1)$$ and
  \begin{equation}\label{bad subword number}
  r_1\geqslant0,r_{t+1}\geqslant0, r_i\geqslant 1~(2\leqslant i\leqslant t)~\text{and}\,\sum_{i=1}^{t+1}r_i=n+j-1-K_t.
  \end{equation}
   For $1\leqslant t\leqslant s$, we denote by $A_t$  the set of  words  in $A((\tau_u))$ with  $t$ {\bf{maximal $(N,j)$-run subword}}s.
It is clear that  $A((\tau_u))$ is partitioned into $A_t$'s, i.e.
 \begin{equation}\label{1-partition}
  A((\tau_u))=\bigsqcup_{t=1}^sA_t.
  \end{equation}
 Next, we partition $A_t$ by the length pattern of ``bad subword" and {\bf{maximal $(N,j)$-run subword}}. Recall that every word $\omega\in A_t$ has the form \eqref{length-dis}. We call $(r_1,n_1,r_2,\cdots,n_t,r_{t+1})$ the length pattern of ``bad subword" and {\bf{maximal $(N,j)$-run subword}}. Denote by $\mathfrak{L}_t$ the set of all such length pattern of $\omega$ in $A_t$. Given a length pattern $({\bf{r,n}}):=(r_1,n_1,\cdots,n_t,r_{t+1})\in \mathfrak{L}_t$, let $B({\bf r,n})$  denote the set of elements of $A_t$ with the length pattern $({\bf r,n})$. 
  Thus, $A_t$ is partitioned into  $B({\bf r,n})$'s.  It follows that

 \begin{equation}\label{length-partition}
   \sum_{\omega\in A_t}\nu([\omega])^\gamma\leqslant \sharp \mathfrak{L}_t\max_{({\bf r,n})\in\mathfrak{L}_t}\sum_{\omega\in B({\bf r,n})}\nu([\omega])^\gamma.
   \end{equation}
   Let ${A}'_t$ be the set of finite words by deleting all ``bad subwords'' of $\omega$ in $A_t$. Thus by the quasi Bernoulli property, we have
\begin{eqnarray*}
   \sum_{\omega\in B({\bf r,n})}\nu([\omega])^\gamma
   &\leqslant&C^{2\gamma(t+1)}\sum_{\omega\in B({\bf r,n})}\prod_{i=1}^{t+1}\nu([B_{r_i}(\omega)])^\gamma\prod_{i=1}^t\nu([W_{n_i}(\omega)])^\gamma\\
  &\leqslant&C^{2\gamma(t+1)}\sum_{\omega\in B({\bf r,n})}\prod_{i=1}^{t+1}\nu([B_{r_i}(\omega)])^\gamma\sum_{\omega\in B({\bf r,n})}\prod_{i=1}^t\nu([W_{n_i}(\omega)])^\gamma\\
   &\leqslant&C^{\gamma(4t+5)} V \sum_{\omega\in A'_t}\nu([\omega])^\gamma,\\
   \end{eqnarray*}
   where
   $$V:=\sum_{\omega\in B({\bf r,n})}\nu([B_{r_1}(\omega)\cdots B_{r_{t+1}}(\omega)])^\gamma.$$
  According to \eqref{bad subword number}, we have $\sum_{i=1}^{t+1}r_i\leqslant n-K$, Thus, together with  Lemma \ref{exponent-bound}, this yields
  $$V\leqslant \sum_{\omega\in\mathbb{N}^{n-K}}\nu([\omega])^\gamma\leqslant C_0M_0^{n-K}.$$
     Then, by \eqref{length-partition}  we have
   \begin{equation}\label{distributionpartition}
   \sum_{\omega\in A_t}\nu([\omega])^\gamma\leqslant C_0C^{\gamma(4t+5)}M_0^{n-K}\sharp \mathfrak{L}_t \sum_{\omega\in A'_t}\nu([\omega])^\gamma .
   \end{equation}
From the definition of $H_n(\epsilon,j,N)$, we have $$\sum_{u\in\Sigma_N^j}(\mu([u])-\epsilon)<\frac{K}{n}<\sum_{u\in\Sigma_N^j}(\mu([u])+\epsilon).$$
 For any $\delta'>0$, one can choose $N$ large enough and $\epsilon$ small enough i.e. $\epsilon=N^{-2j}$ such that
 $$1-\sum_{u\in\Sigma_N^j}\mu([u])<\delta'~\text{and} ~\epsilon N^j<\delta'. $$
It follows that
\begin{equation}\label{nk}
1-\frac{K}{n}<\epsilon N^j+\delta'<2\delta' .
\end{equation}
  According to the definition of $s$ and the fact that  $t\leqslant s$, for any $\delta>0$, when $\delta'$ is taken small enough we have
  \begin{equation}\label{uppercinequ}
  C_0C^{\gamma(4t+5)}M_0^{n-K}\leqslant e^{n\delta/2}.
  \end{equation}
On the other hand, we observe that very length pattern $({\bf{r,n}})\in \mathfrak{L}_t$ is just corresponding to the integer solution of the following equation set
$$\Big\{\begin{array}{lc}
\sum_{i=1}^tn_i=K_t, ~n_i\geqslant j(1\leqslant i\leqslant t),\\
\sum_{i=1}^{t+1}r_i=n+j-1-K_t,~r_1\geqslant0,~r_{t+1}\geqslant0,~ r_i\geqslant 1~(2\leqslant i\leqslant t).\end{array}$$
By the element combinatorial theory, one can obtain the following estimate $$\sharp \mathfrak{L}_t\leqslant \frac{(K-1)!}{(K-t)!(t-1)!}\frac{(n-K-(t-1)j+t)!}{t!(n-K-(t-1)j)!}.$$ Noting that $j$ is a fixed integer relative to $n$, by the Stirling formula we have
$$\frac{(K-1)!}{(K-t)!(t-1)!}\frac{(n-K-(t-1)j+t)!}{t!(n-K-(t-1)j)!}\leqslant e^{n\delta/2}.$$
 In combination with \eqref{distributionpartition} and  \eqref{uppercinequ}, this yields
\begin{equation}\label{upperaaineq}\sum_{\omega\in A_t}\nu([\omega])^\gamma\leqslant e^{n\delta}\sum_{\omega\in A'_t}\nu([\omega])^\gamma.
\end{equation}

Now we  estimate the sum on the right-hand side in the above inequality. First we consider a set
$$
\widetilde{A}_t=\{\widetilde{B}_{r_1}W_{n_1}\widetilde{B}_{r_2}\cdots W_{n_t}\widetilde{B}_{r_{t+1}}:\omega=B_{r_1}W_{n_1}B_{r_2}\cdots W_{n_t}B_{r_{t+1}}\in A_t\},
$$
where $\widetilde{B}_{r_i}$ is a finite word composed of digit $N+1$ with length $r_i$. In other word, the set $\widetilde{A}_t$ is just a set of finite words obtained  by replacing each ``bad subword" $B_{r_i}$ of $\omega$ in $A_t$ by a finite word composed of digit $N+1$ with length $r_i$. Thus the two sets $\widetilde{A}_t$ and $A'_t$ have the same cardinal and each subword $u\in \Sigma_N^j$ appears $\tau_u$ times in  $\omega$ of $\widetilde{A}_t$.  Take

$$h=\frac{1}{j}(\sum_{u\in\Sigma_N^j}-\frac{\tau_u}{n}\ln\frac{\tau_u}{n}-\frac{n-K}{n}\ln \frac{n-K}{n})$$in Lemma \ref{entropy-comb}. Then, for the same $\delta>0$ as  above and for $n$ large enough we have
\begin{eqnarray}\label{estimation}
\sharp A'_t=\sharp \widetilde{A}_t\leqslant&\exp(n(h+\delta)).
\end{eqnarray}
Given $\omega\in A'_t$, denote by $(\tau'_u)$ the appearance distribution with respect to $\Sigma_N^j$ of $\omega$. Then, we have
\begin{equation}\label{number-bound}
     |\omega|=K_t\,\,\textmd{and}\,\,\tau_u \leqslant\tau'_u\leqslant \tau_u+(t-1)(j-1)\leqslant \tau_u+n-{K}.
   \end{equation}
   By the Gibbsian property  and \eqref{p-bound}, we have

\begin{eqnarray*}
j\ln\nu([\omega_1\cdots \omega_{K_t}])&\leqslant&
j\ln C+j\sum_{i=0}^{K_t-1}\varphi(T^ix)\\
&=&j\ln C+\sum_{i=0}^{j-2}(j-1-i)\varphi(T^ix)\\
&&+\sum_{i=K_t-j+1}^{K_t-1}(K_t-i)\varphi(T^ix)+\sum_{i=0}^{K_t-j}\sum_{k=0}^{j-1}\varphi(T^{i+k}x)\\
&\leqslant&(K_t+j^2-j+1)\ln C+\sum_{u\in\Sigma_N^j}\tau'_u\ln\nu([u]),
\end{eqnarray*}
where we obtain the equality by  taking the sums along the oblique diagonals as shown in the following figure and  the last inequality follows from the Gibbsian property and (\ref{p-bound}).
\begin{center}
\setlength{\unitlength}{1mm}
\begin{picture}(45,24)
\multiput(0,16)(4,0){5}{\circle*{0.8}}
\multiput(24,16)(4,0){5}{\circle*{0.8}}
\multiput(19,16)(1,0){3}{\circle*{0.5}}

\multiput(0,12)(4,0){5}{\circle*{0.8}}
\multiput(24,12)(4,0){5}{\circle*{0.8}}
\multiput(19,12)(1,0){3}{\circle*{0.5}}

\multiput(0,5)(0,1){3}{\circle*{0.5}}
\multiput(0,0)(4,0){5}{\circle*{0.8}}
\multiput(24,0)(4,0){5}{\circle*{0.8}}
\multiput(19,0)(1,0){3}{\circle*{0.5}}

\put(0,16){\line(1,-1){16}}
\put(40,0){\line(-1,1){16}}
\put(15,20){$K_t$}
\put(-5,6){$k$}

\end{picture}
\end{center}
Together with \eqref{estimation}, this yields
\begin{eqnarray*}
\sum_{\omega\in A'_t}\nu^\gamma([\omega])&\leq& \sharp A'_t \max_{\omega\in A'_t}\nu^\gamma([\omega])\\&\leq& \exp\left\{n(h+\delta)+\frac{\gamma}{j}\bigg\{\sum_{u\in\Sigma_N^j}\tau'_u\ln \nu([u])+(K_t+j^2)\ln C\bigg\}\right\}
\end{eqnarray*}
Rewrite the right-hand side of the above inequality as
$$\exp\{nL(\gamma,j,(\tau'_u))\},$$
where$$ L(\gamma,j,(\tau'_u))=h+\frac{\gamma}{j}\sum_{u\in\Sigma_N^j}\frac{\tau'_u}{n}\ln\nu([u])+\frac{\gamma}{jn}(K_t+j^2)\ln C+\delta.$$
Now we shall give a negative upper-bound of $ L(\gamma,j,(\tau'_u))$.
In virtue of the definition of $H_n(\epsilon,j,N)$ and \eqref{number-bound}, we can take $\epsilon>0$ small enough and $n$ large enough such that
\begin{equation}\label{estimation 2}\frac{1}{j}\sum_{u\in\Sigma_N^j}\frac{\tau'_u}{n}\ln \nu([u])\leq -\frac{1}{j}H_{j,N}(\nu,\mu)+\delta.\end{equation}
At the same time, we can take  $j$  large enough such that
$$\frac{1}{jn}(K_t+j^2)\ln C\leqslant\delta/2~~.$$
In combination with the last two inequalities, we have
\begin{eqnarray*}
L(\gamma,j,(\tau'_u))&\leq& h-\gamma\frac{1}{j}H_{j,N}(\nu,\mu)+(3/2+\gamma)\delta.
\end{eqnarray*}
Recall that
$$3/2>\gamma>\beta(\nu|\mu),$$
We take $\delta>0$ small enough and $j, \,N$ large enough such that
\begin{equation}  \label{gamma-ineq}
\gamma\geqslant\frac{\frac{1}{j}H_{j,N}(\mu,\mu)+7\delta}
{\frac{1}{j}H_{j,N}(\nu,\mu)}.
\end{equation}
By a similar argument of \eqref{estimation 2}, we have $$-\frac{1}{j}\sum_{u\in\Sigma_N^j}\frac{\tau_u}{n}\ln\frac{\tau_u}{n}\leq\frac{1}{j}H_{j,N}(\mu,\mu)+\delta.$$
It follows that
\begin{eqnarray*}
L(\gamma,j,(\tau'_u))&\leq& h-\frac{1}{j}H_{j,N}(\mu,\mu)-4\delta\\
&\leq& -\frac{1}{j}\frac{n-K}{n}\ln \frac{n-K}{n}-3\delta,
\end{eqnarray*}
 By \eqref{nk}, we can take $\delta'$ small enough such that
$$-\frac{n-K}{n}\ln \frac{n-K}{n}\leq \delta.$$
Thus, we have
$$L(\gamma,j,(\tau'_u))\leq -2\delta,$$
and
$$\sum_{\omega\in A'_t}\nu^\gamma([\omega])\leq \exp (-2n\delta).$$
In combination with \eqref{uppercovering}, \eqref{1-partition} and (\ref{upperaaineq}), this yields
$$\sum_{\omega\in \Lambda_n(\epsilon,j,N)}\nu([\omega])^\gamma\leqslant s(2\epsilon n )^{N^j} e^{-n\delta},$$
which implies that for any $\gamma>\max\{\alpha_\nu,\beta(\nu|\mu)\},$
$$\mathcal{H}^\gamma(\bigcap_{n=l}^\infty H_n(\epsilon,j,N))=\lim_{n\to\infty}\mathcal{H}_{\delta_{n+j-1}}^\gamma(\bigcap_{n=l}^\infty H_n(\epsilon,j,N))=0.$$
Then it follows that$$\dim_\nu G_\mu\leqslant\gamma.$$
       Thus, we obtain  $$\dim_\nu G_\mu\leqslant\max\{\alpha_\nu,\beta(\nu|\mu)\}.$$

\end{proof}

  \section{Applications}
    In the section, we study the set of generic points of an invariant measure for an expanding interval map   by transferring dimension results from the symbolic    space to the interval $[0,1)$. However, for convenience of presentation, we choose to work with the continued fraction system. Then we will give the proof of Theorem \ref{cfs}. Furthermore, we will describe the Hausdorff dimension of the sets of generic points  of an invariant measure for the Gauss transformation  with respect to the Euclidean metric on $[0,1)$. The corresponding results will be stated without proof for a class of expanding interval maps.
\subsection{Gauss transformation}

    Define the Gauss transformation $S:[0,1)\to[0,1)$ by
    $$S(0):=0, \quad S(x):=\frac{1}{x}-\left\lfloor\frac{1}{x}\right\rfloor,\quad x\in(0,1).$$ Then every irrational number in $[0,1)$ can be written uniquely as an infinite expansion of the form
     $$x=\frac{1}{a_1(x)+\frac{1}{a_2(x)+\ddots}},$$
     where $a_1(x)=\lfloor\frac{1}{x}\rfloor$  and $a_k(x)=a_1(S^{k-1}(x))$  for $k\geqslant2$ are called the partial quotients of $x$. For simplicity
     we denote the expansion by $(a_1a_2\cdots)$. Let $\mathcal{M}([0,1), S)$ denote all invariant Borel probability measures with respect to $S$ on $[0,1)$. Set
    $$\Delta(a_1a_2\cdots a_n):=\{x\in[0,1):x_1=a_1,x_2=a_2,\cdots, x_n=a_n\}$$
    which is called a rank-$n$ basic interval.
   Define $\kappa:\mathbb{N}^{\mathbb{N}}\to(0,1)$ by
   $$\kappa(a_1a_2\cdots)=\frac{1}{a_1+\frac{1}{a_2+\ddots}}.$$
      Hence $$\kappa([a_1\cdots a_n])=\Delta(a_1\cdots a_n).$$
      This establishes a one-to-one correspondence between the cylinders in $X$ and the basic intervals in $[0,1)$.

\textit{Proof of Theorem \ref{cfs}.}
Recall that $\eta_s$ is the Gibbs measure associated to the potential $\phi_s(x)=2s\ln x$ for $s>\frac{1}{2}$.
   Define the potential function$$\varphi_s(\omega )=\phi_s\circ\kappa (\omega)$$ on $X$. Then $\varphi_s$ admits a unique Gibbs measure $\nu_s$  which has the same convergence exponent with $\eta_s$ and satisfies  for all cylinder $[\omega]\in\mathcal{C}^*$
  \begin{equation}\label{eta-equ-nu}
   \nu_s([\omega])=\eta_s(\Delta(\omega)).
   \end{equation}
          That implies that
      \begin{equation} \label{symbolic-r-e}
      \dim_{\nu_s}A=\dim_{\eta_s}\kappa(A)\end{equation} for any subset $A$ of $X$.

   Now define $$\theta:=\ell\circ \kappa$$
on $\mathcal{B}(X)$. Then the two systems
   $(X,\mathcal{B}(X),\theta,T)$ and $([0,1),\mathcal{B}([0,1)),\sigma,S)$ are isomorphic. Hence
   $h_\theta=h_{\ell}$.
     Let $G_\theta$ be the set of  generic points of $\theta$ on the system $(X,T)$. By a standard argument, one can cheeck $$\mathcal{G}_\ell=\kappa(G_\theta).$$
    According to the analysis following  Theorem  \ref{thm} and (\ref{symbolic-r-e}) we have completed the proof of Theorem \ref{cfs}.\quad\quad\quad\quad\quad\quad\quad\quad\quad\quad\quad\quad\quad\quad\quad\quad\quad\quad\quad\quad\quad\quad$\Box$

Denote by $\dim_H\mathcal{G}_{\ell}$ the Hausdorff dimension  of $\mathcal{G}_{\ell}$ with respect to the Euclidean metric on $[0,1)$. Our next result shows that $\dim_H\mathcal{G}_{\ell}$ equals the Billingsley dimension  $\dim_{\lambda}\mathcal{G}_{\ell}$, where $\lambda$ is the  Lebesgue measure.
First, we note a fact that for any subset $A\subset(0,1)$, we have $\dim_HA\leqslant\dim_\lambda A$, because the former dimension index is defined by using the covering of arbitrary intervals and the later one by the covering of basic intervals. By a result of Wegmann (\cite{weg}, see also \cite{cajar}, p. 360), the equality holds in the
    following situation.
    \begin{proposition} \label{carjar}
     One has $\dim_HA=\dim_\lambda A$ for $A\subset (0,1)$ if
     \begin{equation} \label{carjare}
     \lim_{n\to\infty}\frac{\ln\lambda(\Delta(x_1\cdots x_n))}{\ln\lambda(\Delta(x_1\cdots x_{n+1}))}=1, ~(\forall x \in A).
     \end{equation}

      \end{proposition}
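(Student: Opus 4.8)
The inequality $\dim_H A\le\dim_\lambda A$ is recorded just before the statement, so the entire content is the reverse inequality $\dim_\lambda A\le\dim_H A$; hypothesis \eqref{carjare} must be used only for this direction. The plan is to convert an arbitrary Euclidean covering of $A$ by intervals into a covering by basic intervals at a controlled cost, thereby showing that the basic-interval $\gamma$-Hausdorff measure $\mathcal{H}^\gamma_\lambda$ (computed with basic-interval covers and the gauge $\lambda(\Delta(\cdot))$) vanishes on $A$ for every $\gamma>\dim_H A$. Fix such a $\gamma$, choose $\gamma_1\in(\dim_H A,\gamma)$, and then fix $\eta>0$ so small that $\tfrac{\eta}{1-\eta}\le\gamma-\gamma_1$. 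Since $\mathcal{H}^{\gamma_1}_{\mathrm{Eucl}}(A)=0$, for every $\delta_0>0$ there is a cover $\{I_k\}$ of $A$ by intervals with $|I_k|<\delta_0$ and $\sum_k|I_k|^{\gamma_1}$ as small as desired.

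Because \eqref{carjare} is only a pointwise statement, I would first make it uniform. Writing $\Delta_n(x)=\Delta(x_1\cdots x_n)$ and $a_n(x)=-\ln\lambda(\Delta_n(x))$, condition \eqref{carjare} reads $a_n(x)/a_{n+1}(x)\to1$. For $M\ge1$ set
\[
A_{M}=\Big\{x\in A:\ a_{n+1}(x)\le (1-\eta)^{-1}a_n(x)\ \text{for all } n\ge M\Big\},
\]
so that $A=\bigcup_M A_{M}$. Refining by the level-$M$ cylinder gives $A_{M}=\bigsqcup_{w\in\mathbb{N}^M}\big(A_{M}\cap\Delta(w)\big)$, and by the $\sigma$-stability of both $\dim_H$ and $\mathcal{H}^\gamma_\lambda$ it suffices to work on a single piece $A'=A_{M}\cap\Delta(w)$. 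On $A'$ the bound $a_{n+1}\le(1-\eta)^{-1}a_n$ holds for all $n\ge M$, i.e.\ $\lambda(\Delta_{n+1}(x))\ge\lambda(\Delta_{n}(x))^{1/(1-\eta)}$; moreover, restricting the mesh to $\delta_0<\lambda(\Delta(w))$ forces, for each $x\in A'$, the first level $n(x)$ with $\lambda(\Delta_{n(x)}(x))<|I_k|$ to exceed $M$. Hence the uniform estimate $\lambda(\Delta_{n(x)}(x))\ge\lambda(\Delta_{n(x)-1}(x))^{1/(1-\eta)}\ge|I_k|^{1/(1-\eta)}$ applies to every stopping cylinder arising from $A'$.

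The core is then a counting step. Fix $I_k$ and put $\ell_k=|I_k|$. The cylinders $\{\Delta_{n(x)}(x):x\in A'\cap I_k\}$ are pairwise disjoint (the ``first drop below $\ell_k$'' rule makes them a subfamily of a partition, so two of them are equal or disjoint), they lie inside $\Delta(w)$, each meets $I_k$, and each has $\lambda$-measure in $[\,\ell_k^{1/(1-\eta)},\ell_k)$. Being disjoint and all meeting an interval of length $\ell_k$ while having length $<\ell_k$, they sit inside an interval of length $\le 3\ell_k$, so there are at most $3\ell_k/\ell_k^{1/(1-\eta)}=3\,\ell_k^{-\eta/(1-\eta)}$ of them. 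Therefore
\[
\sum \lambda(\Delta_{n(x)}(x))^\gamma\le 3\,\ell_k^{-\eta/(1-\eta)}\cdot\ell_k^{\gamma}=3\,\ell_k^{\,\gamma-\eta/(1-\eta)}\le 3\,|I_k|^{\gamma_1},
\]
by the choice of $\eta$. Summing over $k$ produces a basic-interval cover of $A'$ of mesh $<\delta_0$ with total $\gamma$-cost at most $3\sum_k|I_k|^{\gamma_1}$, which is arbitrarily small; hence $\mathcal{H}^\gamma_\lambda(A')=0$. Countable subadditivity over the decomposition gives $\mathcal{H}^\gamma_\lambda(A)=0$, so $\dim_\lambda A\le\gamma$, and letting $\gamma\downarrow\dim_H A$ finishes the proof. (This is the Wegmann--Cajar scheme, so one may alternatively just cite \cite{weg,cajar}.)

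The main obstacle is precisely the passage from the pointwise hypothesis \eqref{carjare} to a single uniform estimate: without control of the cylinder shrinkage, the basic intervals produced from a fixed $I_k$ could be far smaller than $|I_k|$ and too numerous, and the count $3\,\ell_k^{-\eta/(1-\eta)}$ would blow up. The decomposition $A=\bigcup_{M,w}\big(A_{M}\cap\Delta(w)\big)$ together with the mesh restriction $\delta_0<\lambda(\Delta(w))$ is exactly what guarantees that every relevant stopping level exceeds $M$, so that $\lambda(\Delta_{n(x)}(x))\ge|I_k|^{1/(1-\eta)}$ holds with one fixed $\eta$; everything after that is the routine combinatorial counting above.
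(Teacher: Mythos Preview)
Your argument is correct. Note, however, that the paper does not give its own proof of this proposition: it simply attributes the result to Wegmann \cite{weg} (see also Cajar \cite{cajar}) and states it without proof. What you have written is precisely the standard Wegmann--Cajar scheme that the paper is citing, carried out in full detail; indeed, you acknowledge this yourself in the final parenthetical remark. So there is nothing to compare---your proposal supplies the proof that the paper chose to outsource to the literature.
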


 \begin{theorem}  \label{Leb-Haus}
             For any $\ell\in \mathcal{M}([0,1), S)$, we have
                  $$\dim_H\mathcal{G}_{\ell}=\max\left\{\frac{1}{2},
                  \frac{h_{\ell}}{-2\int_0^1\ln x\, \mathrm{d}\ell(x)}\right\}.$$
     \end{theorem}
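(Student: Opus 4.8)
The plan is to deduce the statement from Theorem~\ref{cfs} specialized to $s=1$, combined with a comparison between the Euclidean metric and the metric induced by $\eta_1$. First I would record why $s=1$ is special. The potential $\phi_1=-\ln|S'|$ is the critical one, so its Gurevich pressure is $P_{\phi_1}=0$ and $\eta_1$ is the Gauss measure $\tfrac{1}{\ln 2}\tfrac{dx}{1+x}$; moreover $\eta_1(\Delta(n))\asymp\lambda(\Delta(n))\asymp n^{-2}$ forces the convergence exponent $\alpha_1=\inf\{t:\sum_n n^{-2t}<\infty\}=\tfrac12$. Since $h_\theta=h_\ell$ and, by Proposition~\ref{convergence}, $h(\nu_1|\theta)=-\int_X\varphi_1\,d\theta=-2\int_0^1\ln x\,d\ell$, Theorem~\ref{cfs} at $s=1$ reads
\[
\dim_{\eta_1}\mathcal G_\ell=\max\Big\{\tfrac12,\ \tfrac{h_\ell}{-2\int_0^1\ln x\,d\ell}\Big\},
\]
the second entry being understood as $0$ when $-\int_0^1\ln x\,d\ell=\infty$.

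For the upper bound I would invoke $\dim_H A\le\dim_\lambda A$ (noted before Proposition~\ref{carjar}) and observe that $\dim_\lambda\mathcal G_\ell=\dim_{\eta_1}\mathcal G_\ell$. The latter holds because the Gauss density is bounded above and below, so $\lambda(\Delta(\omega))\asymp\eta_1(\Delta(\omega))$ uniformly in $\omega$, and comparable measures define the same Billingsley dimension. This already gives $\dim_H\mathcal G_\ell\le\max\{\tfrac12,\frac{h_\ell}{-2\int\ln x\,d\ell}\}$, and simultaneously disposes of the case $-\int_0^1\ln x\,d\ell=\infty$, where only $\tfrac12$ remains.

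The substance is the matching lower bound, and the two terms must be treated by different mechanisms, since the passage from Billingsley to Euclidean dimension is not uniform over $\mathcal G_\ell$. For $\frac{h_\ell}{-2\int\ln x\,d\ell}$ (relevant when the integral is finite, i.e.\ $\ln a_1\in L^1(\ell)$) I would revisit the set $Y^*\subset G_\theta$ of Proposition~\ref{beta-lowerbound}, whose image $\kappa(Y^*)\subset\mathcal G_\ell$ has Billingsley dimension $\ge\frac{h_\ell}{-2\int\ln x\,d\ell}$. On $Y^*$ the partial quotients are typical for the ergodic Markov measures $\mu_j$, which satisfy $-\int\ln x\,d\mu_j<\infty$ by Proposition~\ref{int-app}; hence $\tfrac{\ln a_n}{n}\to0$, and since $\ln q_n\ge cn$ the ratio in \eqref{carjare} tends to $1$. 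By Proposition~\ref{carjar}, $\dim_H\kappa(Y^*)=\dim_\lambda\kappa(Y^*)$, giving $\dim_H\mathcal G_\ell\ge\frac{h_\ell}{-2\int\ln x\,d\ell}$. For the universal term $\tfrac12$, Wegmann's criterion genuinely fails: I would reuse the Cantor set $F_z(\varepsilon,\delta)\subset G_\theta$ of Proposition~\ref{billingsley}, whose modifications place huge digits $x_{k^2}\asymp s_k$ at the sparse positions $k^2$, so $\ln a_{k^2}$ is comparable to $k^2$ and \eqref{carjare} does not hold. Instead I would estimate the \emph{Euclidean} local dimension of the mass distribution $m$ of that proof directly: with $|\Delta(x|_1^n)|\asymp q_n^{-2}$, $\ln q_n=\sum_{i\le n}\ln a_i+O(n)$, while \eqref{eq31} makes the $O(n)$ term and \eqref{eq36} the non-modified digits negligible, so $\ln q_n$ is dominated by the modified digits, whose sizes are pinned by \eqref{eq33}. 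This yields
\[
\liminf_{n\to\infty}\frac{\ln m([x|_1^n])}{\ln|\Delta(x|_1^n)|}
=\liminf_{k\to\infty}\frac{-\delta\sum_{r=N}^{k}\ln s_r}{-2\sum_{r=1}^{k}\ln s_r}
\ge\frac{\delta}{2}
\]
for every $x\in F_z(\varepsilon,\delta)$, whence $\dim_H\kappa(F_z(\varepsilon,\delta))\ge\delta/2$; letting $\delta\to1$ gives $\dim_H\mathcal G_\ell\ge\tfrac12$. Combining the two lower bounds with the upper bound finishes the proof.

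I expect the main obstacle to be this last step: converting the basic-interval local-dimension estimate into a genuine Euclidean Hausdorff dimension bound. Because the modified digits are enormous, a level-$n$ basic interval is vastly smaller than its parent, so I would have to verify that a Euclidean ball meets only boundedly many basic intervals of comparable length at the relevant scale. This is true here because the children $\Delta(x|_1^{k^2})$, as $x_{k^2}$ ranges over $\pi((s_k-s_k^\delta,s_k])$, all have comparable length (since $x_{k^2}\asymp s_k$ on that narrow range), so the mass distribution principle applies. Carrying out this regularity check, and the bookkeeping that makes the non-modified digits and the $O(n)$ term negligible in $\ln q_n$, is where the care is needed; everything else is transfer through $\kappa$ and the identities $h_\theta=h_\ell$ and $h(\nu_1|\theta)=-2\int_0^1\ln x\,d\ell$.
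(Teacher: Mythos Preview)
Your upper bound and the $Y^*$ part of the lower bound match the paper's proof essentially verbatim. The genuine divergence is in how you obtain $\dim_H\mathcal G_\ell\ge\tfrac12$.

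You correctly observe that Wegmann's criterion~\eqref{carjare} need not hold on the general Cantor set $F_z(\varepsilon,\delta)$ of Proposition~\ref{billingsley}, because the sequence $\{s_k\}$ in Lemma~\ref{m-con-exp} is only constrained by $\sum_{k\le n}\ln s_k\gg n^3$ and $\ln s_k$ could be comparable to the partial sum. You therefore propose to bypass Wegmann by a direct Euclidean mass-distribution argument, and you identify the regularity check (a Euclidean ball meeting only boundedly many basic intervals of comparable length) as the obstacle. That route can be made to work, but it is the hard way. The paper instead exploits the freedom in the seed construction: rather than reusing $F_z(\varepsilon,\delta)$, it builds a \emph{new} Cantor subset of $G_\theta$ tailored to the Gauss measure, namely
\[
F=\bigl\{x: x_{k^2}\in(a^{k^2},2a^{k^2}]\ \text{for some fixed }a>1,\ x_k=z_k\text{ otherwise}\bigr\},
\]
with a seed $z$ satisfying $-\sum_{k\le(q+1)^2}\ln\nu_1([z_k])\ll\sum_{k\le q}k^2$. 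On this set one computes $\ln\eta_1(\Delta(x_1\cdots x_n))\sim-\tfrac{2}{3}n^{3/2}\ln a$, so the ratio in~\eqref{carjare} tends to $1$ and Wegmann \emph{does} apply, yielding $\dim_H\kappa(F)=\dim_{\eta_1}\kappa(F)\ge\tfrac12$ immediately. The paper's trick is thus to choose concrete digit sizes with polynomial growth of $\ln x_{k^2}$ (namely $\sim k^2$), which simultaneously gives the $\tfrac12$ Billingsley lower bound and makes consecutive cylinder measures comparable; your approach keeps the abstract $s_k$ and pays the price of a direct Euclidean analysis.
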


\begin{proof}
Note that $\eta_1$ is the Gauss measure with density $\frac{1}{\ln2}\frac{1}{ (1+x)}$. It follows that  $\alpha_1=\frac{1}{2}$. Hence, $\nu_1$ also has convergence exponent $\frac{1}{2}$. By Theorem \ref{thm} and (\ref{symbolic-r-e}),
     we have
    $$\dim_H\mathcal{G}_{\ell}\leqslant \dim_\lambda\mathcal{G}_{\ell}
    =\dim_{\eta_1}\mathcal{G}_{\ell} =
    \dim_{\nu_1}G_\theta=\max\left\{\frac{1}{2},\frac{h_\theta}{-\int\varphi_1 \, \mathrm{d}\theta}\right\},$$
    where the first equality comes from the fact that the Gauss measure $\eta_1$ is boundedly equivalent to the Lebesgue measure $\lambda$.

        It  remains to show the converse inequality.
          Recall that we have constructed two subsets for the lower bound estimation of $\dim_\nu G_\mu$ in Section 4 and we will use them once more. First,  by Proposition \ref{seed},
    there exists $z=(z_n)_{n\geqslant1}\in G_{\theta}$ such that
    \begin{equation}\label{gaussseed}
    -\sum_{k=1}^{(q+1)^2}\ln{\nu_1([z_k])}\ll \sum_{k=1}^qk^2.
    \end{equation}
   For a positive number $a>1$, set
      $$ F=\left\{x\in X:x_{k^2} \in(a^{k^2},2a^{k^2}];~x_k=z_k ~\textmd{if} ~k~ \textmd{is nonsquare}\right\}.$$
     It is clear that $F\subset  G_{\theta}$. Thus, we have
      $\dim_{\nu_1} F\geqslant\frac{1}{2}$ by a similar argument in the proof of  Proposition
     \ref{billingsley}.
     Let $x\in \kappa(F)\subset \mathcal{G}_{\ell}$. By   the quasi Bernoulli property, we have
     $$C^{-(n+1)}\prod_{i=1}^n\eta_1(\Delta(x_i))\leqslant \eta_1(\Delta(x_1\cdots x_n))\leqslant C^{n+1}\prod_{i=1}^n\eta_1(\Delta(x_i)).$$
 In combination with \eqref{eta-equ-nu} and \eqref{gaussseed}, using once more the fact that $\eta_1$ has density function $\frac{1}{\ln2}\frac{1}{ (1+x)}$, one can show
     $$\lim_{n\to\infty}\frac{\ln\eta_1(\Delta(x_1\cdots x_n))}{n^{3/2}}=-\frac{2}{3}\ln a.$$
          As we have mentioned a fact that the Gauss measure $\eta_1$ is boundedly equivalent to the Lebesgue measure $\lambda$, it follows that (\ref{carjare}) holds.   So by Proposition \ref{carjar} and (\ref{symbolic-r-e}), we have
     $$\dim_H\mathcal{G}_{\ell}\geqslant\dim_H\kappa(F)=\dim_\lambda\kappa(F)=\dim_{\eta_1}\kappa(F)
     =\dim_{\nu_1}F\geqslant\frac{1}{2}.$$

     Second, in the case of $|\int\varphi_1 \, \mathrm{d}\theta|<+\infty$,
     consider the set $Y^*$ in the proof of Proposition \ref{beta-lowerbound}.
     Let $x\in\kappa(Y^*)\subset\mathcal{G}_{\ell}$, we can show just like  in the proof of Proposition \ref{beta-lowerbound}
     $$\lim_{n\to\infty}\frac{\ln\lambda(\Delta(x_1\cdots x_n))}{n}=\int_0^12\ln x \, \mathrm{d}\ell(x),$$
     which implies (\ref{carjare}), so by Proposition \ref{carjar}  and (\ref{symbolic-r-e}), we have
    \begin{eqnarray*}
    \dim_H\mathcal{G}_{\ell}&\geqslant&\dim_H\kappa(Y^*)=\dim_\lambda\kappa(Y^*)=\dim_{\eta_1}\kappa(Y^*)\\
    &=&\dim_{\nu_1}Y^*\geqslant\frac{h_\theta}{-\int\varphi_1 \, \mathrm{d}\theta}=\frac{h_\ell}{-2\int_0^1\ln x \, \mathrm{d}\ell(x)}. \end{eqnarray*}
     Combining the two lower bounds,   we have completed  the proof.
     \end{proof}
\subsection{Expanding interval dynamics}
Let $([0,1],f)$ be an expanding interval dynamical system. More precisely, we assume that  $f:[0,1]\to[0,1]$ is a map for which there exists a countable collection of pairwise disjoint open intervals $\{I_a\}_{a\in S}$ such that
\begin{enumerate}
\item [(1)] $[0,1]=\bigcup_{a\in S}\overline{I}_a$;
\item [(2)]$f|_{I_a}$ is a $C^2$ diffeomorphism of $I_a$ onto $(0,1)$ for each $a\in S$;
\item [(3)] Uniform expansion: There exist constants $N\in\mathbb{N}$ and $\xi>1$ such that
$$|(f^N)'(x)|\geqslant\xi\,\,\text{for any }\,x\in\bigcup_{a\in S}{I}_a;$$
\item[(4)] R\'enyi's condition: there exists a constant $K$ such that
$$\sup_{a\in S}\sup_{x,y,z\in I_a}\frac{|f''(x)|}{|f'(y)f'(z)|}\leqslant K.$$

\end{enumerate}
Such dynamic is called Expanding Markov R\'enyi dynamical system (\cite{Pollicott-Weiss}).  Suppose $\ln |f'|$ has summable variations and finite $1$-order variation and ~Gurevich~pressure. For $s\in\mathbb{R}$, consider potential function $\psi_s(x)=-s\ln|f'(x)|$. Set $s_0=\inf\{s\geqslant0:P_{\psi_s}<+\infty\}$. Then for any $s> s_0$ the potential function $\psi_s$ admits a unique Gibbs measure $\eta_s$ and the corresponding convergence exponent will be denoted by $\alpha_{\eta_s}$. Now let us state a  result for the above  expanding interval dynamical systems and omit the proof, because one can transfer the dimension result from the symbolic space to the interval $[0,1]$ as we do with the continued fraction system.
\begin{theorem}
Let $\ell$ be an $f$-invariant Borel probability measure on $[0,1]$. For any $s>s_0$, if $\int_0^1\ln|f'(x)| \, \mathrm{d}\ell(x)<+\infty$ we have
$$\dim_{\eta_s} \mathcal{G}_\ell=\max\left\{\alpha_{\eta_s},\frac{h_\ell}{P_{\psi_s}+s\int_0^1\ln|f'(x)| \, \mathrm{d}\ell(x)}\right\};$$
otherwise we have
$$\dim_{\eta_s} \mathcal{G}_\ell=\alpha_{\eta_s}.$$
\end{theorem}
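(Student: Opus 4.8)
The plan is to mimic precisely the transfer argument used for the Gauss system in the proof of Theorem \ref{cfs}, replacing the continued-fraction coding by the natural symbolic coding of the Expanding Markov R\'enyi system. Taking the index set to be $\mathbb{N}$, I would define the coding map $\kappa:\mathbb{N}^{\mathbb{N}}\to[0,1]$ by sending $(a_1a_2\cdots)$ to the unique point whose $f$-itinerary visits $I_{a_1},I_{a_2},\dots$, so that the rank-$n$ basic interval $\Delta(a_1\cdots a_n)=\bigcap_{k=0}^{n-1}f^{-k}(I_{a_{k+1}})$ satisfies $\kappa([a_1\cdots a_n])=\Delta(a_1\cdots a_n)$. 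Since each branch is full (it maps onto $(0,1)$ by hypothesis (2)), the coding realizes the full shift $\mathbb{N}^{\mathbb{N}}$, so the symbolic theory of the paper applies verbatim. The cylinders of $X=\mathbb{N}^{\mathbb{N}}$ are thereby in one-to-one correspondence with the basic intervals of $[0,1]$, up to the countable set of endpoints, which is harmless for dimension computations. I would then pull back the potential, setting $\varphi_s:=\psi_s\circ\kappa$ on $X$ with $\psi_s(x)=-s\ln|f'(x)|$.

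The first substantive step is to check that $\varphi_s$ inherits the regularity needed to invoke the symbolic theory. By hypothesis $\ln|f'|$ has summable variations, finite first variation and finite Gur\'evich pressure, and the distortion bound furnished by R\'enyi's condition (4) together with the uniform expansion (3) transfers these to $\varphi_s$. Consequently, for $s>s_0$ the potential $\varphi_s$ admits a unique Gibbs measure $\nu_s$ on $X$, and bounded distortion yields the identification $\nu_s([\omega])=\eta_s(\Delta(\omega))$ for every cylinder (the analogue of \eqref{eta-equ-nu}). Because $\kappa$ preserves cylinder/basic-interval measures, the convergence exponents agree, $\alpha_{\nu_s}=\alpha_{\eta_s}$, and for every $A\subset X$ one has $\dim_{\nu_s}A=\dim_{\eta_s}\kappa(A)$ (the analogue of \eqref{symbolic-r-e}).

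With this dictionary in place I would pull back the invariant measure, setting $\theta:=\ell\circ\kappa$. The systems $(X,T,\theta)$ and $([0,1],f,\ell)$ are isomorphic, hence $h_\theta=h_\ell$, and a standard argument gives $\mathcal{G}_\ell=\kappa(G_\theta)$. Applying Theorem \ref{thm} to $\theta$ and $\nu_s$ yields $\dim_{\nu_s}G_\theta=\max\{\alpha_{\nu_s},\beta(\nu_s|\theta)\}$, and transferring through $\kappa$ gives $\dim_{\eta_s}\mathcal{G}_\ell=\max\{\alpha_{\eta_s},\beta(\nu_s|\theta)\}$. It then remains to rewrite $\beta(\nu_s|\theta)$ in entropy form. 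By Proposition \ref{convergence},
$$h(\nu_s|\theta)=P_{\psi_s}-\int_X\varphi_s\,\mathrm{d}\theta=P_{\psi_s}+s\int_0^1\ln|f'(x)|\,\mathrm{d}\ell(x).$$
When $\int_0^1\ln|f'|\,\mathrm{d}\ell<+\infty$ this relative entropy is finite, so Proposition \ref{alpha-beta} gives $\beta(\nu_s|\theta)=h_\theta/h(\nu_s|\theta)=h_\ell/\big(P_{\psi_s}+s\int_0^1\ln|f'|\,\mathrm{d}\ell\big)$, which is the first formula; when the integral diverges, $h(\nu_s|\theta)=+\infty$, so Proposition \ref{alpha-beta} gives $\beta(\nu_s|\theta)\le\alpha_{\nu_s}=\alpha_{\eta_s}$ and the maximum collapses to $\alpha_{\eta_s}$, which is the second formula.

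I expect the main obstacle to lie entirely in the first substantive step: verifying that the pulled-back potential $\varphi_s$ has summable variations and that the interval Gibbs measure $\eta_s$ satisfies the symbolic Gibbsian inequality with cylinder measures equal to basic-interval measures. This is exactly where R\'enyi's condition does its work, controlling the ratio of $|\Delta(\omega)|$ to $\prod_{k}|f'|^{-1}$ along orbits and thereby supplying the bounded distortion constant $C$ in the Gibbsian property \eqref{gibbs1}. Once this distortion bookkeeping is carried out, everything else is a mechanical transfer; and, in contrast to the Euclidean statement of Theorem \ref{Leb-Haus}, no appeal to Wegmann's comparison (Proposition \ref{carjar}) is needed here, since the $\eta_s$-Billingsley dimension passes directly through the coding.
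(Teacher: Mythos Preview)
Your proposal is correct and follows exactly the approach the paper intends: the paper omits the proof of this theorem and states only that ``one can transfer the dimension result from the symbolic space to the interval $[0,1]$ as we do with the continued fraction system,'' which is precisely the coding-and-pullback argument you have written out. Your identification of the one nontrivial point---verifying that the pulled-back potential $\varphi_s=\psi_s\circ\kappa$ retains summable variations and that $\nu_s([\omega])=\eta_s(\Delta(\omega))$ via bounded distortion from R\'enyi's condition---is also correct, and once that is settled the remainder is, as you say, a mechanical transcription of the proof of Theorem~\ref{cfs} together with Propositions~\ref{convergence} and~\ref{alpha-beta}.
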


R\'enyi's condition implies that the Gibbs measure associated to $\psi_1(x)$ is boundedly equivalent to the  Lebesgue measure (\cite{Adler}, see also\cite{Schweiger}, p.~105). As a counterpart of
Theorem \ref{Leb-Haus}  for such general expanding maps, we have the following result.
 \begin{theorem}
             For any $\ell\in \mathcal{M}([0,1), f)$, we have
                  $$\dim_H\mathcal{G}_{\ell}=\max\left\{\alpha_{\eta_1},
                  \frac{h_{\ell}}{\int_0^1\ln|f'(x)|\, \mathrm{d}\ell(x)}\right\}.$$
     \end{theorem}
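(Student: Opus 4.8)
The plan is to transfer the symbolic dimension formula to the interval exactly as in the proof of Theorem~\ref{Leb-Haus}, replacing every appeal to the explicit Gauss density by the bounded-distortion estimates that R\'enyi's condition provides. Put $\varphi_1=\psi_1\circ\kappa$ on $X=\mathbb{N}^\mathbb{N}$; this potential admits a unique Gibbs measure $\nu_1$ with the same convergence exponent $\alpha_{\eta_1}$ and with $\nu_1([\omega])=\eta_1(\Delta(\omega))$, so that $\dim_{\nu_1}A=\dim_{\eta_1}\kappa(A)$ for every $A\subset X$. Writing $\theta=\ell\circ\kappa$ one has $\mathcal{G}_\ell=\kappa(G_\theta)$ and $h_\theta=h_\ell$. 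Since R\'enyi's condition forces the Gibbs measure of $\psi_1=-\ln|f'|$ to be boundedly equivalent to Lebesgue (Adler's theorem), the pressure $P_{\psi_1}$ vanishes, and Proposition~\ref{convergence} gives $h(\nu_1|\theta)=-\int_X\varphi_1\,\mathrm{d}\theta=\int_0^1\ln|f'|\,\mathrm{d}\ell$.

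For the upper bound I would combine $\dim_H A\leqslant\dim_\lambda A$ with the bounded equivalence $\eta_1\asymp\lambda$, which yields $\dim_\lambda=\dim_{\eta_1}$, and then invoke Theorem~\ref{thm} applied to $G_\theta$:
\[
\dim_H\mathcal{G}_\ell\leqslant\dim_{\eta_1}\mathcal{G}_\ell=\dim_{\nu_1}G_\theta
=\max\Big\{\alpha_{\eta_1},\tfrac{h_\ell}{\int_0^1\ln|f'|\,\mathrm{d}\ell}\Big\},
\]
with the convention that the fraction is $0$ when $\int_0^1\ln|f'|\,\mathrm{d}\ell=+\infty$.

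The matching lower bound reuses the two Cantor sets constructed in Section~4. The set $F_z(\varepsilon,\delta)\subset G_\theta$ of Proposition~\ref{billingsley}, built from a seed $z\in G_\theta$ satisfying the growth estimate \eqref{eq36}, has $\dim_{\nu_1}F_z(\varepsilon,\delta)\geqslant\frac{\alpha_{\eta_1}(1-\varepsilon)\delta}{1+\varepsilon}$; and, when $\int_0^1\ln|f'|\,\mathrm{d}\ell<+\infty$, the set $Y^*\subset G_\theta$ of Proposition~\ref{beta-lowerbound} has $\dim_{\nu_1}Y^*\geqslant\frac{h_\ell}{\int_0^1\ln|f'|\,\mathrm{d}\ell}$. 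On the images $\kappa(F_z(\varepsilon,\delta))$ and $\kappa(Y^*)$, both contained in $\mathcal{G}_\ell$, I would verify Wegmann's regularity condition \eqref{carjare}; Proposition~\ref{carjar} then upgrades $\dim_\lambda$ to $\dim_H$ on these sets, and letting $\varepsilon\to0$, $\delta\to1$ delivers both halves of the lower bound.

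The main obstacle is exactly the verification of \eqref{carjare} without an explicit invariant density, and here R\'enyi's condition is indispensable. Its uniform bounded-distortion estimate yields, with constants independent of $n$ and of the word,
\[
\ln\lambda(\Delta(x_1\cdots x_n))=S_n\psi_1(x)+O(1)=-\sum_{k=0}^{n-1}\ln|f'(f^kx)|+O(1),
\]
so that the single-step increment $\ln\lambda(\Delta(x_1\cdots x_{n+1}))-\ln\lambda(\Delta(x_1\cdots x_n))$ is comparable to $\ln\eta_1(\Delta(x_{n+1}))$. For $Y^*$ the Birkhoff averages of $\psi_1$ converge, whence $\ln\lambda(\Delta(x_1\cdots x_n))\sim -n\int_0^1\ln|f'|\,\mathrm{d}\ell$ and the increment is $o(n)$, giving \eqref{carjare}. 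For $F_z(\varepsilon,\delta)$ the delicate point is the sparse very large digits sitting at the square coordinates: one must check that the increment at position $k^2$, of order $\ln s_k$ by \eqref{eq33}, is negligible against the accumulated sum $-\sum_{m\leqslant n}\ln\nu_1([x_m])$, which by \eqref{eq31} and \eqref{eq36} grows like $k^3\sim n^{3/2}$. This last estimate, resting on the growth condition \eqref{eq31} for $\{s_k\}$ and the seed inequality \eqref{eq36}, is the step demanding the most care.
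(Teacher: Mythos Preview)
Your overall plan coincides with what the paper intends: the paper states this theorem without proof and says explicitly that one should transfer the symbolic result to the interval exactly as in the proof of Theorem~\ref{Leb-Haus}, replacing the explicit Gauss density by the bounded-distortion coming from R\'enyi's condition (so that $\eta_1\asymp\lambda$ and $P_{\psi_1}=0$). Your upper bound and the $Y^*$-half of the lower bound are handled just as in that proof.

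There is, however, a genuine gap in your treatment of the $\alpha_{\eta_1}$-lower bound. In Theorem~\ref{Leb-Haus} the paper does \emph{not} reuse the general set $F_z(\varepsilon,\delta)$ of Proposition~\ref{billingsley}; it builds a tailored set $F$ with $x_{k^2}\in(a^{k^2},2a^{k^2}]$, precisely so that $\ln\eta_1(\Delta(x_1\cdots x_n))\sim -\tfrac{2}{3}(\ln a)\,n^{3/2}$ and Wegmann's condition \eqref{carjare} follows from an honest asymptotic. Your argument for \eqref{carjare} on $F_z(\varepsilon,\delta)$ breaks down: inequality \eqref{eq31} gives only the \emph{lower} bound $\sum_{r\leqslant k}\ln s_r\gg k^3$, not the two-sided estimate ``grows like $k^3$'' that you claim. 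Nothing in Lemma~\ref{m-con-exp} prevents, say, $\ln s_k=2^k$, in which case \eqref{eq31} still holds but $\ln s_k\big/\sum_{r<k}\ln s_r\to 1$, so the single-step increment is not negligible and \eqref{carjare} fails at the square positions.

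The fix is to do what the paper does in Theorem~\ref{Leb-Haus}: replace $F_z(\varepsilon,\delta)$ by a specific Cantor set with controlled growth of the square-position digits. For a general $f$ one cannot use the naive interval $(a^{k^2},2a^{k^2}]$ (that relied on $\eta_1(\Delta(n))\asymp n^{-2}$ for Gauss), but one can work through the ordering $\pi$ and choose $x_{k^2}\in\pi\big((m_k-m_k^\delta,m_k]\big)$ with $m_k$ taken along the limsup set in \eqref{eq34} and additionally satisfying $\ln m_k\sim ck^2$; equivalently, refine the choice of $\{s_k\}$ in Lemma~\ref{m-con-exp} to also satisfy $\ln s_k=o\big(\sum_{r<k}\ln s_r\big)$. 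Once that regularity is in place, your bounded-distortion computation gives $-\ln\lambda(\Delta(x_1\cdots x_n))\sim c'n^{3/2}$ and Wegmann's criterion goes through as in the continued-fraction case.
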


{\bf{Acknowledgment}}: The authors are supported by Program Caiyuanpei. The second author would like to thank the hospitality of  LAMFA Picarde University where this work is partly done.   The second author is supported by  NSFC(11501112, 11471075) and the third author is supported by  NSFC (11171128, 11271148).

\end{document}